\newcommand{\mB}{\mathcal{B}}
\newcommand{\mF}{\mathcal{F}}
\newcommand{\mJ}{\mathcal{J}}
\newcommand{\mL}{\mathcal{L}}
\newcommand{\mS}{\mathcal{S}}
\newcommand{\mT}{\mathcal{T}}
\newcommand{\fm}{\mathfrak{m}}
\newcommand{\fM}{\mathfrak{M}}
\newcommand{\fN}{\mathfrak{N}}
\newcommand{\fp}{\mathfrak{p}}
\newcommand{\fP}{\mathfrak{P}}
\newcommand{\fq}{\mathfrak{q}}
\newcommand{\fT}{\mathfrak{T}}
\newcommand{\bfC}{\mathbf{C}}
\newcommand{\bfF}{\mathbf{F}}
\newcommand{\bfP}{\mathbf{P}}
\newcommand{\bfQ}{\mathbf{Q}}
\newcommand{\bfT}{\mathbf{T}}
\newcommand{\bfZ}{\mathbf{Z}}
\newcommand{\Oo}{\mathcal{O}}
\newcommand{\OF}{\mathcal{O}_F}
\newcommand{\ov}{\overline}
\newcommand{\be}{\begin{equation}}
\newcommand{\ee}{\end{equation}}
\newcommand{\bes}{\begin{equation*}}
\newcommand{\ees}{\end{equation*}}
\newcommand{\bs}{\begin{split}}
\newcommand{\es}{\end{split}}
\newcommand{\bss}{\begin{split*}}
\newcommand{\ess}{\end{split*}}
\newcommand{\bmat}{\left[ \begin{matrix}}
\newcommand{\emat}{\end{matrix} \right]}
\newcommand{\bsmat}{\left[ \begin{smallmatrix}}
\newcommand{\esmat}{\end{smallmatrix} \right]}
\newcommand{\bml}{\begin{multline}}
\newcommand{\eml}{\end{multline}}
\newcommand{\bmls}{\begin{multline*}}
\newcommand{\emls}{\end{multline*}}
\DeclareMathOperator{\Cl}{Cl}
\DeclareMathOperator{\End}{End}
\DeclareMathOperator{\Ext}{Ext}
\DeclareMathOperator{\Frob}{Frob}
\DeclareMathOperator{\Gal}{Gal}
\DeclareMathOperator{\GL}{GL}
\DeclareMathOperator{\Hom}{Hom}
\DeclareMathOperator{\val}{val}
\newcommand{\tr}{\textup{tr}\hspace{2pt}}
\theoremstyle{plain}
\newtheorem{thm}{Theorem}
\newtheorem{prop}[thm]{Proposition}
\newtheorem{cor}[thm]{Corollary}
\newtheorem{lemma}[thm]{Lemma}
\theoremstyle{definition}
\newtheorem{definition}[thm]{Definition}
\newtheorem{rem}[thm]{Remark}
\numberwithin{thm}{section}
\numberwithin{equation}{section}
\author{Tobias Berger and Krzysztof Klosin}
\title{Modularity of residual Galois extensions and the Eisenstein ideal}
\begin{document}
 
\thanks{The first author's research was supported by the EPSRC Grant EP/R006563/1. The second author was supported by the Young Investigator Grant \#H98230-16-1-0129 from the National
Security Agency, a Collaboration for Mathematicians Grant \#578231  from the Simons Foundation
 and by a PSC-CUNY award jointly funded by the Professional Staff Congress and the City
University of New York.}

\maketitle
\begin{abstract} For a totally real field $F$, a finite extension $\bfF$ of $\bfF_p$  and a Galois character $\chi: G_F \to \bfF^{\times}$ unramified away from a finite set of places $\Sigma \supset \{\fp \mid p\}$ consider the Bloch-Kato Selmer group $H:=H^1_{\Sigma}(F, \chi^{-1})$.  In \cite{BergerKlosin15} it was proved  that the number $d$ of isomorphism classes of (non-semisimple, reducible) residual representations $\ov{\rho}$ giving rise to lines in $H$ which are modular by some $\rho_f$ (also unramified outside $\Sigma$) satisfies $d \geq n:= \dim_{\bfF} H$. This was proved under the assumption that  the order of a congruence module is greater than or equal to that of a divisible Selmer group.
 We show here that if in addition the relevant local Eisenstein ideal $J$ is non-principal, then $d >n$. When $F=\bfQ$ we prove the desired bounds on the congruence module and the Selmer group. We also formulate a congruence condition implying the non-principality of $J$ that can be checked in practice, allowing us to furnish an example where $d>n$. \end{abstract}

\section{Introduction}
Let $p$ be an odd prime and let $\Sigma$ be a finite set of primes of $\bfQ$ containing $p$ where each prime $\ell \in \Sigma$, $\ell \neq p$ satisfies $\ell \not\equiv 1$ (mod $p$). Write $G_{\Sigma}$ for the absolute Galois group of the maximal Galois extension of $\bfQ$ unramified outside of $\Sigma$. Let $E$ be a finite extension of $\bfQ_p$ with integer ring $\Oo$, uniformizer $\varpi$ and $\Oo/\varpi \Oo=\bfF$. Let $\chi: G_{\Sigma} \to \bfF^{\times}$ be a character. Consider a non-split extension of $G_{\Sigma}$-modules
$$ 0 \to \bfF \to \ov{\rho} \to \bfF(\chi) \to 0.$$
In this paper we are interested in the modularity of $\ov{\rho}$ in the following sense: Fix a positive integer $N$ divisible only by the primes in $\Sigma -\{p\}$. We will say that $\ov{\rho}$ is modular (of level $N$)  if there exists a  newform $f$ (of level $N$) giving rise to a (irreducible) Galois representation $\rho_f: G_{\Sigma} \to \GL_2(E)$ and a $G_{\Sigma}$-stable $\Oo$-lattice in the space of $\rho_f$ such that with respect to this lattice the mod $\varpi$ reduction $\ov{\rho}_f$ of $\rho_f$ is isomorphic to $\ov{\rho}$ (as representations). 

This is a very strong notion of modularity for two reasons: \begin{enumerate} \item we require that $\ov{\rho}_f \cong \ov{\rho}$ rather than simply $\tr \ov{\rho}_f = \tr \ov{\rho}$ and
\item we do not allow $\rho_f$ to be ramified at primes outside of $\Sigma$. 
\end{enumerate}
The requirement (2) stands in contrast with the work of Hamblen and Ramakrishna \cite{HamblenRamakrishna08} who prove modularity of such $\ov{\rho}$ by $\rho_f$ in the sense of (1), but allow for additional ramification of $\rho_f$.
More specifically, they show the existence of  a characteristic zero lift $\rho: G_{\Sigma'} \to \GL_2(\Oo)$ of $\ov{\rho}$ for some set $\Sigma' \supset \Sigma$ and then use the modularity theorem of Skinner and Wiles \cite{SkinnerWiles99} to conclude modularity of $\ov{\rho}$. 

To the best of our knowledge the question of modularity of $\ov{\rho}$ in our strong sense has never been studied despite being rather natural. (In the semi-simple reducible case such an analysis was carried out by Billerey and Menares in \cite{BillereyMenares18} using a different method.) While we are not able to prove that all  $\ov{\rho}$ as above are modular in this sense, this is perhaps not to be expected. In particular not all such extensions will in general be modular if we fix the level $N$ as there are only finitely many forms of fixed level (we also fix the weight by imposing a condition on the determinant). So, in particular enlarging $\bfF$ (which increases the number of isomorphism classes of $\ov{\rho}$) will produce non-modular extensions. This prompts an intriguing question: given $N$ how many of the extensions $\ov{\rho}$ are modular of level $N$? In this article we give a lower bound on this number when $\ov{\rho}$ is in the image of the Fontaine-Laffaille functor as we now explain. 
While we limit most of our discussion here for simplicity to the case of $\bfQ$, we prove some of our results for a general totally real field $F$ (see below).

 Any isomorphism class $\ov{\rho}$ in the category of representations gives rise to a line in the residual Bloch-Kato Selmer group  $H^1_{\Sigma}(\bfQ, \chi^{-1})$ (where we do not impose any conditions on primes in $\Sigma$ other than $p$). We showed in \cite{BergerKlosin15} that under some assumptions the group $H^1_{\Sigma}(\bfQ, \chi^{-1})$ has a basis consisting of modular extensions, i.e., that at least $n:=\dim H^1_{\Sigma}(\bfQ, \chi^{-1})$ such isomorphism classes of $\ov{\rho}$ are modular. 

Improving this bound (which is the main goal of this paper) is a tougher problem and we show it is related to the structure of the Eisenstein ideal $J$ of the (local) cuspidal Hecke algebra $\bfT$. We obtain the most satisfactory answer for $F=\bfQ$. In this case we show that if $J$ is not principal and  the Selmer group $H^1_{\Sigma}(\bfQ, \chi)$ (``for  extensions in  the opposite order'' of characters to the one in $\ov{\rho}$)  is one-dimensional, then  the number of modular isomorphism classes of the representations $\ov{\rho}$ is strictly larger than $n$ (under some restrictions on $\Sigma$ and $\chi$) - cf.~Corollary \ref{sum up 2}.

One of the immediate consequences of our results is that if $J$ is not principal then $\dim H^1_{\Sigma}(\bfQ, \chi^{-1}) >1$ (since in a one-dimensional Selmer group there is only one line!). We note that  Wake and Wang-Erickson \cite{WakeWangErickson18preprint} give a cohomological lower bound on the number of generators of the Eisenstein ideal for modular forms of weight 2 and trivial nebentypus. A side effect of our result (but one that applies to the case of $k>2$ or $k=2$ and non-trivial nebentypus, so not the case studied in \cite{WakeWangErickson18preprint})  is that it provides a condition in the converse direction, i.e., $J$ not principal implies $\dim H^1_{\Sigma}>1$.

In the process of proving Corollary \ref{sum up 2} (i.e., when $F=\bfQ$) we establish a lower  bound on the congruence module $\bfT/J$ by a certain Bernoulli number with correction factors. Previous results of this kind include Theorem 5.1 in \cite{SkinnerWiles97},  which applies in the case of $k=2$ and non-trivial nebentypus and an analogous result of Mazur \cite{Mazur78}, Proposition II.9.7 (for $k=2$, prime level and trivial nebentypus). 
We also establish a corresponding upper bound on the relevant Bloch-Kato Selmer group which together with the $\bfT/J$-bound are key for the existence of a modular basis of $H^1_{\Sigma}(\bfQ, \chi^{-1})$.  We also prove bounds on other Selmer groups that allow one to check when $\dim_{\bfF}H^1_{\Sigma}(\bfQ, \chi)=1$ and $\dim_{\bfF}(\bfQ, \chi^{-1})>1$ (the case when our theorem is interesting). 

For a general $F$ we obtain a similar result. However the existence of corresponding bounds on $\bfT/J$ and the Selmer group, while expected to hold, is not yet known.

Let us discuss the organization of the paper. In section \ref{Setup} we establish basic notation and facts regarding Selmer groups and Fontaine-Laffaille representations. In section \ref{The rings Ttau} we study the relevant Hecke algebra $\bfT$ along with its quotients $\bfT_{\tau}$ corresponding to newforms whose Galois representations reduce to different isomorphism classes of (reducible) residual representations $\tau$. We also define the Eisenstein ideal $J$ and prove a preliminary result guaranteeing the existence of more than $n$ modular Galois extensions (Proposition \ref{main1}). In section \ref{Ideal of reducibility and its principality} we introduce and study the ideals of reducibility of the Galois representations $\rho_{\tau}: G_{\Sigma} \to \GL_2(\bfT_{\tau})$ (whose existence we prove) showing their principality under the assumption that $\dim_{\bfF}H^1_{\Sigma}(\bfQ, \chi)=1$. This allows us to strengthen Proposition \ref{main1} to Theorem \ref{main3}. In section \ref{FQ} we strengthen Theorem \ref{main3} further in the case $F=\bfQ$ by proving an equality between the orders of $\bfT/J$ and the relevant divisible Selmer group. In section \ref{Analysis} we establish bounds on certain Selmer groups allowing us (among other things) to verify the condition $\dim_{\bfF}H^1_{\Sigma}(\bfQ, \chi)=1$ for an example which we discuss in section \ref{Ex}.

We would like to thank David Spencer for informing us about \cite{BillereyMenares18} and \cite{Spencer18}. We are also grateful to Neil Dummigan and Carl Wang-Erickson for helpful comments.

\section{Setup}\label{Setup}
 Let $F$ be a  totally real field  
and $p>2$ a prime with $p \nmid \# \Cl_F$  and $p$ unramified in $F/\bfQ$. Let $\Sigma$ be a finite set of finite places of $F$ containing all the places
lying over $p$. Assume that if $\fq \in \Sigma$, then $N \fq \not\equiv 1$ (mod $p$). Let $G_{\Sigma}$ denote the Galois
group $\Gal(F_{\Sigma}/F)$, where $F_{\Sigma}$ is the maximal extension of $F$ unramified outside $\Sigma$.  For every prime $\fq$ of $F$ we fix compatible embeddings $\ov{F} \hookrightarrow \ov{F}_{\fq} \hookrightarrow \bfC$ and write $D_{\fq}$ and $I_{\fq}$ for the corresponding decomposition and inertia subgroups of $G_F$ (and also their images in $G_{\Sigma}$ by a slight abuse of notation). Let $E$ be a (sufficiently large) finite extension of $\bfQ_p$ with ring of integers $\Oo$ and residue field $\bfF$. We fix a choice
of a  uniformizer $\varpi$. We will write $\epsilon$ for the $p$-adic cyclotomic character, $\ov{\epsilon}$ for its mod $p$ reduction, and $\omega$ for the Teichm\"uller lift of $\epsilon$. For a local ring $A$ we write $\fm_A$ for its maximal ideal.

\subsection{Fontaine-Laffaille representations} \label{s2.1}
Let $n$ be any positive integer. Suppose $$r: G_{\Sigma} \to {\rm GL}_n(\bfF)$$ is a continuous homomorphism.

We recall from \cite{ClozelHarrisTaylor08} p. 35 the definition of a \emph{Fontaine-Laffaille} representation: Let $\fp \mid p$
and $A$ be a local complete Noetherian $\bfZ_p$-algebra with residue field $\bfF$. A representation $\rho: D_{\fp} \to {\rm GL}_n(A)$ is Fontaine-Laffaille if for each Artinian quotient
$A'$ of $A$, $\rho \otimes A'$ lies in the essential image of the Fontaine-Laffaille functor $\mathbf{G}$ (for its definition see e.g. \cite{BergerKlosin13} Section 5.2.1). We also call a continuous finite-dimensional $G_{\Sigma}$-representation $V$ over $\bfQ_p$  Fontaine-Laffaille if, for all primes $\fp \mid p$,  it is crystalline and
${\rm Fil}^0 D=D$ and ${\rm Fil}^{p-1} D=(0)$ for the filtered vector space $D=(B_{\rm crys} \otimes_{\bfQ_p} V)^{D_{\fp}}$ defined by Fontaine (for details see again \cite{BergerKlosin13} Section 5.2.1).

For $j\in \{1,2\}$ let $\tau_j: G_{\Sigma} \to \GL_{n_j}(\bfF)$ be an absolutely irreducible continuous representation. Assume that $\tau_1 \not\cong \tau_2$. Consider the set of isomorphism classes of $n$-dimensional residual Fontaine-Laffaille  representations of the form: \be \label{form1}\tau=\bmat \tau_1 & * \\ & \tau_2\emat: G_{\Sigma} \rightarrow \GL_n(\bfF),\ee which are non-semi-simple ($n=n_1+n_2$).

\subsection{Selmer groups} For a $p$-adic $G_{\Sigma}$-module $M$ (finitely generated or cofinitely generated over $\Oo$ - for precise definitions cf.~\cite{BergerKlosin13}, section 5) we define the Selmer group $H^1_{\Sigma}(F, M)$ to be the subgroup of $H^1_{\rm cont}(F_{\Sigma}, M)$ consisting of cohomology classes which are crystalline in the sense of Bloch-Kato  at all primes $\fp$ of $F$ dividing $p$, i.e. 
$$H^1_{\Sigma}(F, M)=\ker(H^1(G_{\Sigma}, M) \to  \prod_{\fp \mid p}(H^1(F_\fp, M)/H^1_f(F_\fp, M)).$$

For $G_{\Sigma}$-modules $M$ occurring as $\Oo$-lattices $T$ in $E$-vector spaces $V$ or as divisible modules $V/T$ the crystalline conditions $H^1_f(F_\fp, M)$ are as defined by Bloch-Kato in \cite{BlochKato90} (cf.~also section 1 in \cite{Rubin00}). For  $G_{K}$-modules $M$ of finite cardinality we use Fontaine-Laffaille theory to define the local condition: 
If $K$ denotes an unramified extension of $\bfQ_p$ then  if $M$ is in the essential image of the Fontaine-Laffaille functor $\mathbf{G}$ we define $H^1_f(K,M)$ as the image of ${\rm Ext}^1_{\mathcal{M
F}_\Oo}(1_{\rm FD}, D)$ in $H^1(K,M)\cong {\rm Ext}^1_{\Oo[G_K]}(1,M)$, where $\mathcal{M
F}_\Oo$ is the category of filtered Dieudonn\'e
modules, $\mathbf{G}(D)=M$ and $1_{\rm FD}$ is the unit filtered Dieudonn\'e
module defined in Lemma 4.4 of \cite{BlochKato90}. 
Note that we place no restrictions at the primes in $\Sigma$ that do not lie over $p$. For more details cf.~[loc.cit.].

\section{The rings $\bfT_{\tau}$} \label{The rings Ttau}

\begin{prop} \label{genRibet} Suppose $\rho: G_{\Sigma} \rightarrow \GL_n(E)$ is irreducible and satisfies  \be \label{semis} \ov{\rho}^{\rm ss} \cong \tau_1 \oplus \tau_2,\ee where $\ov{\rho}^{\rm ss}$ denotes the semi-simplification of any residual representation of $\rho$. Then there exists a lattice  inside $E^{n}$ so that with respect to that lattice
the mod $\varpi$ reduction $\ov{\rho}$ of $\rho$ has the form $$\ov{\rho}=\bmat \tau_1 & * \\ 0 & \tau_2\emat$$ and is
non-semi-simple. \end{prop}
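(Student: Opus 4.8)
This proposition is a form of Ribet's lemma, and the plan is to prove it by lattice-chasing inside $V:=E^n$. The absolute irreducibility of $\tau_1,\tau_2$ and the hypothesis $\tau_1\not\cong\tau_2$ will be used to pin down the possible shapes of the reductions of $G_{\Sigma}$-stable lattices, while the irreducibility of $\rho$ will force the construction to terminate in a lattice of the required form.

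First I would fix any $G_{\Sigma}$-stable $\Oo$-lattice $L\subseteq V$ (one exists since $\rho(G_{\Sigma})$ is compact) and analyze $\ov L:=L/\varpi L$. Since the semisimplification of $\ov L$ is $\cong\tau_1\oplus\tau_2$ with $\tau_1,\tau_2$ absolutely irreducible and non-isomorphic, $\ov L$ has exactly the two Jordan--H\"older factors $\tau_1,\tau_2$, and is one of: (a)~$\tau_1\oplus\tau_2$; (b)~a non-split extension with socle $\tau_1$; (c)~a non-split extension with socle $\tau_2$. In cases (a) and (b), $\ov L$ contains a $G_{\Sigma}$-submodule isomorphic to $\tau_1$, with quotient necessarily $\cong\tau_2$; choosing adapted $\bfF$-bases of this submodule and quotient and lifting them to an $\Oo$-basis of $L$ puts $\ov\rho|_L$ in the shape $\bmat\tau_1&*\\0&\tau_2\emat$. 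In case (c), I would first replace $L$ by the preimage $L'\subseteq L$ of the socle $\tau_2\subseteq\ov L$; then $\varpi L\subseteq L'\subseteq L$, and multiplication by $\varpi$ together with the identifications $L/L'\cong\tau_1$ and $L'/\varpi L\cong\tau_2$ shows that $\ov{L'}$ is an extension $0\to\tau_1\to\ov{L'}\to\tau_2\to0$, so $L'$ falls under case (a) or (b). Hence, after possibly replacing $L$, we may assume $\ov\rho|_L=\bmat\tau_1&*\\0&\tau_2\emat$, the cocycle $*$ defining a class $c_L\in\Ext^1_{\bfF[G_{\Sigma}]}(\tau_2,\tau_1)$.

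If $c_L\neq0$ we are done. If $c_L=0$, conjugate $L$ by the block-unipotent matrix in $\GL_n(\Oo)$ lifting the coboundary, so that $\ov\rho|_L$ becomes block-diagonal; writing $\rho|_L=\bmat A&B\\ C&D\emat$ we then have $\ov B=\ov C=0$, and because $\rho$ is irreducible neither $B$ nor $C$ vanishes identically (otherwise $\rho$ would preserve a coordinate subspace). Let $\beta=\min_{g}v(B(g))$, a finite integer $\ge1$. Conjugating by $\diag(I_{n_1},\varpi^{\beta}I_{n_2})$ replaces the $B$-block by $\varpi^{-\beta}B$ (valuation $0$) and the $C$-block by $\varpi^{\beta}C$ (still integral), yielding a $G_{\Sigma}$-stable lattice $L_1\supseteq L$ with $\ov\rho|_{L_1}=\bmat\tau_1&*\\0&\tau_2\emat$ and $*$ nonzero as a function. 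If the new class is nonzero we are done; otherwise re-split by a unipotent and repeat, producing a strictly increasing chain $L=L_0\subsetneq L_1\subsetneq L_2\subsetneq\cdots$ in which the first $n_1$ basis vectors $v_1,\dots,v_{n_1}$ are never changed and the passage from $L_k$ to $L_{k+1}$ enlarges the second block by $\varpi^{-\beta_k}$ with $\beta_k\ge1$.

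The crux, which I expect to be the only real obstacle, is to show this loop terminates. Suppose it does not, and put $W:=\bigcup_kL_k$, a $G_{\Sigma}$-stable $\Oo$-submodule of $V$. Since $v_1,\dots,v_{n_1}$ are fixed throughout and $\beta_0+\cdots+\beta_{k-1}\to\infty$, one checks that $W\cap\spanning_E(v_1,\dots,v_{n_1})=\Oo v_1\oplus\cdots\oplus\Oo v_{n_1}$ and that $W$ surjects onto $V/\spanning_E(v_1,\dots,v_{n_1})$; thus $W$ sits in an exact sequence $0\to\Oo^{n_1}\to W\to E^{n_2}\to0$ of $\Oo$-modules. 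This splits because $\Ext^1_\Oo(E,\Oo)=0$ (here one uses that $\Oo$ is complete), so $W\cong\Oo^{n_1}\oplus E^{n_2}$ and the maximal divisible $\Oo$-submodule of $W$ is an $E$-subspace of $V$ of dimension $n_2$. Being the divisible part of a $G_{\Sigma}$-module it is $G_{\Sigma}$-stable, and since $1\le n_2\le n-1$ this contradicts the irreducibility of $\rho$. Therefore the loop stops, and the terminal lattice has the required non-semisimple form; by contrast the earlier reduction to the block shape and the rescaling step are routine.
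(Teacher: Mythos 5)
Your proof is correct, but you take a genuinely different route from the paper. The paper handles this with a one-line citation: it is the special case of Theorem~1.1 of \cite{Urban01} in which Urban's base ring $\mathcal{B}$ is the discrete valuation ring $\Oo$. The authors deliberately lean on Urban's general lattice construction because they re-use it later over the finite reduced $\Oo$-algebras $\bfT_\tau$ (see Lemma~\ref{imageT} and Theorem~\ref{prin}), where the direct DVR argument does not apply. You instead give a self-contained Ribet-style lattice chase: reduce an initial stable lattice to block-upper-triangular form (your case (c) reduction via the socle preimage is correct), rescale by $\diag(I_{n_1},\varpi^{\beta}I_{n_2})$ whenever the extension class vanishes, and prove termination by showing that an infinite increasing chain $L_0\subsetneq L_1\subsetneq\cdots$ would have union $W$ whose maximal $\Oo$-divisible submodule is a $G_\Sigma$-stable $E$-subspace of dimension $n_2$, contradicting irreducibility of $\rho$. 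The pieces all check out, including the splitting of $0\to\Oo^{n_1}\to W\to E^{n_2}\to 0$ via $\Ext^1_\Oo(E,\Oo)=0$ (which does require completeness of $\Oo$, as you note) and the observation that the first $n_1$ basis vectors are preserved both by the block-unipotent re-splittings, which leave the lattice unchanged since they lie in $\GL_n(\Oo)$, and by the diagonal rescalings. The trade-off is that your argument is more elementary and self-contained over a DVR, while the citation to Urban supplies at once the greater generality the paper needs downstream.
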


\begin{proof} This argument goes back to Ribet and in this form is a special case of  \cite{Urban01}, Theorem 1.1, where the ring $\mB$ in [loc.cit.] is the discrete valuation ring $\Oo$.
\end{proof}

 For $\tau$ as in \eqref{form1} let $\Phi_{\tau, E}$ be the set of isomorphism classes of Fontaine-Laffaille at $\fp \mid p$ Galois representations $\rho: G_{\Sigma} \to \GL_n(E)$ such that there exists a $G_{\Sigma}$-stable lattice $L$ in the space of $\rho$ so that the mod $\varpi$-reduction of $\rho_L$ equals $\tau$. 
The following is a higher-dimensional analogue of Lemma 2.13(ii) from \cite{SkinnerWiles99}:
\begin{prop}[\cite{BergerKlosin15}, Proposition 3.2] \label{SW2.13} One has $\Phi_{\tau,E} \cap \Phi_{\tau',E}=\emptyset$ if $\tau \not\cong \tau'$. \end{prop}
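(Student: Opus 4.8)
The statement asserts that the sets $\Phi_{\tau,E}$ and $\Phi_{\tau',E}$ of isomorphism classes of Fontaine--Laffaille Galois representations $\rho\colon G_\Sigma\to\GL_n(E)$ whose reduction with respect to some stable lattice equals $\tau$ (resp.\ $\tau'$) are disjoint when $\tau\not\cong\tau'$. Since both $\tau$ and $\tau'$ have the shape \eqref{form1} with the \emph{same} ordered pair of irreducible constituents $(\tau_1,\tau_2)$ (by hypothesis $\tau_1\not\cong\tau_2$, so the filtration is canonical), the proof is essentially an argument that the isomorphism class of $\tau$ is recovered from $\rho$ independently of the chosen lattice. The plan is to suppose $\rho\in\Phi_{\tau,E}\cap\Phi_{\tau',E}$, so there are two $G_\Sigma$-stable lattices $L$ and $L'$ in the space $V$ of $\rho$ with $\ov{\rho_L}\cong\tau$ and $\ov{\rho_{L'}}\cong\tau'$, and to deduce $\tau\cong\tau'$.

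First I would reduce to the case where $L'\subseteq L$ and $L\not\subseteq \varpi^{-1}L'$, i.e.\ pass to ``neighbouring'' lattices: since the lattice chain is connected, it suffices to treat the step where $L'\subseteq L$ with $L/L'$ killed by $\varpi$ (the general case follows by iterating and using transitivity of $\cong$). Then I would analyze the reduction maps. Because $\tau_1\not\cong\tau_2$ and both reductions are non-split extensions of $\tau_2$ by $\tau_1$ (that is the content of \eqref{form1}), the line $\ov{L'}\cap\ker(\ov{\rho_L}\to\tau_2)$ — equivalently the image of $L'$ in $L/\varpi L$ — is a $G_\Sigma$-submodule, and one checks it is either all of $\ov{\rho_L}$ (forcing $L'=L$, contradiction) or the canonical $\tau_1$-line. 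In the latter case a standard diagram chase (this is exactly the Ribet-style argument already invoked for Proposition \ref{genRibet}, in the form of \cite{Urban01}) identifies $\ov{\rho_{L'}}$ as again a non-split extension of $\tau_2$ by $\tau_1$, and moreover shows the extension class is unchanged up to scalar; hence $\tau'\cong\tau$.

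The cleanest way to package this is via the ideal-of-reducibility / pseudo-representation formalism: the traces $\tr\rho_L=\tr\rho_{L'}=\tr\rho$ agree, and the residual pseudo-representation $\ov{\tr\rho}$ determines the pair $\{\tau_1,\tau_2\}$; combined with the fact (from Fontaine--Laffaille theory, used here to even make sense of the local condition) that the non-split extension of $\tau_2$ by $\tau_1$ realizing $\tau$ corresponds to a line in $\Ext^1_{G_\Sigma}(\tau_2,\tau_1)$ that is pinned down by which lattice-reductions occur, one concludes both $\tau$ and $\tau'$ give the same line, so $\tau\cong\tau'$. The key input making the extension class well-defined (rather than just the semisimplification) is precisely that we are only comparing lattices inside a \emph{single} $\rho$, so the relevant $\Ext^1$ is the one-dimensional-or-smaller space cut out by the reduction.

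**Main obstacle.** The crux is ruling out that two genuinely different lines in $\Ext^1_{G_\Sigma}(\tau_2,\tau_1)$ (when that space has dimension $\ge 2$) could both arise as lattice-reductions of the \emph{same} $\rho$ — a priori conceivable. The resolution is that for a fixed irreducible $\rho$ the set of sublines of the reduction obtained from stable lattices is governed by a single rank-one phenomenon (the ``bottom'' of the lattice chain in the Bruhat--Tits building for $\GL_n$ restricted to $\rho$), so all such reductions are homothetic as extensions; making this precise by induction on the lattice distance, exactly as in \cite{BergerKlosin15}, Proposition 3.2 (whose proof in turn follows \cite{SkinnerWiles99}, Lemma 2.13(ii)), is the technical heart. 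The Fontaine--Laffaille hypothesis itself is not really used for the disjointness — it is inherited by $\rho$ regardless of the lattice — so it plays no essential role beyond ensuring all objects in sight live in the relevant category.
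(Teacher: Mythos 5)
Your high-level strategy — compare two stable lattices $L$, $L'$ and argue they are homothetic, in the style of Skinner--Wiles Lemma 2.13(ii) which \cite{BergerKlosin15}, Proposition 3.2 generalizes — is the right one, but the technical heart of your argument is wrong. Normalize so that $L'\subseteq L$ and $L'\not\subseteq\varpi L$ (what you actually wrote, $L\not\subseteq\varpi^{-1}L'$, says $\varpi L\not\subseteq L'$ and is incompatible with your own subsequent assumption that $L/L'$ is killed by $\varpi$; you want $\varpi^{-1}L'\not\subseteq L$). The image of $L'$ in $\ov{\rho}_L=L/\varpi L$ is a nonzero $G_\Sigma$-submodule, hence either all of $\ov{\rho}_L$ — in which case Nakayama gives $L'=L$ and we are done — or the unique proper nonzero submodule $\tau_1$. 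You claim that in the latter case a diagram chase exhibits $\ov{\rho}_{L'}$ as ``again a non-split extension of $\tau_2$ by $\tau_1$'' with the same class. This is false, and it is precisely here that the real argument does something different. If $L'+\varpi L$ is the preimage of $\tau_1$ in $L$, the second isomorphism theorem gives $L'/(L'\cap\varpi L)\cong(L'+\varpi L)/\varpi L\cong\tau_1$, and since $\varpi L'\subseteq L'\cap\varpi L$ this exhibits $\tau_1$ as a \emph{quotient} of $\ov{\rho}_{L'}=L'/\varpi L'$. But by hypothesis $\ov{\rho}_{L'}\cong\tau'$ is non-split with $\tau_1$ as its unique irreducible submodule and $\tau_2\not\cong\tau_1$ as its unique irreducible quotient — a contradiction. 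So the second case cannot occur: the proof is by \emph{contradiction}, forcing $L'$ and $L$ to be homothetic, not by propagating an extension class along a lattice chain. The iteration you propose cannot work as stated, because the intermediate $G_\Sigma$-stable lattices between the two extreme ones have reductions that are split or non-split in the \emph{opposite} direction, so membership in $\Phi_{\tau,E}$ is not preserved step by step.

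Two smaller points. The pseudo-representation ``packaging'' you offer as the cleanest route is circular: $\ov{\tr\rho}$ determines only the semisimplification $\tau_1\oplus\tau_2$, and the assertion that the lattice reductions ``pin down'' a single line in $\Ext^1_{G_\Sigma}(\tau_2,\tau_1)$ is exactly the statement being proved, not an input. Your observation that the Fontaine--Laffaille condition plays no essential role in the disjointness is, however, correct — it is a property of $\rho$ over $E$, independent of the choice of lattice.
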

For the rest of this section set $n=2$, $\tau_1=1$ and $\tau_2=\chi = \psi \ov{\epsilon}^{k-1}$, where $\psi$ is unramified at $p$ and $k$ is an integer such that $2 \leq k \leq p-1$. Write $\tilde{\psi}$ for the Teichm\"uller lift of $\psi$ and set $\tilde{\chi} = \tilde{\psi} \epsilon^{k-1}$.  

Let $\fN$ be an ideal of $\OF$ divisible only by primes in $\Sigma$ which do not lie over $p$. 
We consider the space $\mS_k(\fN, \tilde{\psi})$ of cuspidal Hilbert modular forms (over the field $F$)  of parallel weight $k \geq 2$, level $\Gamma_0(\fN)$ and  character $\tilde{\psi}$. 
Let $\bfT'$ be the $\Oo$-subalgebra 
of $\End_{\bfC} \mS_k(\fN, \tilde{\psi})$ generated by the Hecke operators $T_{\fq}$ for all $\fq \not\in\Sigma$. 
Set $J'$ to be the ideal of $\bfT'$ generated by the set $\{T_{\fq}-(1+\tilde{\psi}(\fq)(N\fq)^{k-1}) \mid \fq \not\in \Sigma\}$. 
Let $\fm$ be a maximal ideal of $\bfT'$ containing $J'$ and set $\bfT$ to be the completion of $\bfT'$ at the ideal $\fm$.
\begin{definition}  We will call $J:=J'\bfT$ the (local) \emph{Eisenstein ideal} (associated to $\tilde{\psi}$). \end{definition}

We refer to the surjective $\Oo$-algebra homomorphisms $\lambda: \bfT \twoheadrightarrow \Oo$ as \emph{Hecke eigensystems}. For each such $\lambda$ we denote by $\tilde{\tau}_{\lambda} : G_{\Sigma} \to \GL_2(E)$ the corresponding (irreducible) Galois representation.  Using Proposition \ref{genRibet} we see that there exists a lattice in $E^2$ with respect to which $\tilde{\tau}_{\lambda}$ is valued in $\GL_2(\Oo)$ such that its mod $\varpi$ reduction $\ov{\tilde{\tau}}_{\lambda}$ is non-semisimple. Proposition \ref{SW2.13} guarantees that the isomorphism class of $\ov{\tilde{\tau}}_{\lambda}$ is independent of the choice of such a lattice. In view of this we will simply write $\tau_{\lambda}$ for the \emph{non-semi-simple residual} Galois representation attached to $\lambda$ (well-defined up to isomorphism). We write $\bfT_{\tau}$ for the image of the canonical map $$\bfT \to \prod_{\lambda: \tau_{\lambda} \cong \tau} \Oo,$$ i.e., the quotient of $\bfT$ corresponding to all Hecke eigensystems whose associated residual non-semisimple Galois representations are isomorphic to $\tau$. If no $\tau_{\lambda}$ is isomorphic to $\tau$ we set $\bfT_{\tau}=0$. We will denote by $J_{\tau}$ the image of $J$ in $\bfT_{\tau}$. 
 
\begin{rem} \label{finiteness1} It is clear that $\bfT$ and $\bfT_{\tau}$ are finitely generated $\Oo$-modules. Furthermore, $\#\bfT/J < \infty$ as otherwise, as we show below, there would exist a surjective $\Oo$-algebra map $\bfT \to \Oo$ factoring through $\bfT/J$. The existence of such a map would violate the Ramanujan bounds. For the sake of contradiction suppose $\#\bfT/J = \infty$. Then $\bfT/J = \Oo^s \times T$ as an $\Oo$-module with $T$  finite and $s>0$. 
Hence $\bfT/J$ is not of finite length as an $\Oo$-module, and it is easy to see that it is also not of finite length as a module over itself. Since $\bfT$ is Noetherian, it follows that there is a prime ideal $\fp$ of $\bfT/J$ which is not maximal (cf.~Theorem 2.14 in \cite{Eisenbud}), hence $\bfT/(J+\fp)$ is an infinite domain (as all finite domains are fields).  This implies that the structure map $\Oo \to \bfT/(J+\fp)$ is injective (as $\bfT$ is a finitely generated $\Oo$-module), and so the domain $\bfT/(J+\fp)$ is finite over $\Oo$, thus  we may assume it equals $\Oo$ as $\Oo$ is assumed to be sufficiently large. Hence  the canonical map $\bfT/J \twoheadrightarrow \bfT/(J + \fp)=\Oo$ gives us the $\Oo$-algebra surjection.\end{rem}

Note that isomorphism classes of Fontaine-Laffaille residual representations $\tau: G_{\Sigma} \to \GL_2(\bfF)$ such that $\tau = \bmat 1 &* \\ & \chi\emat$  are in one-to-one correspondence with lines in $H^1_{\Sigma}(F, \chi^{-1})$. Since $2 \leq k<p$ the representations $\tilde{\tau}_{\lambda}$ (and $\tau_{\lambda}$) are Fontaine-Laffaille at primes lying over $p$. 
\begin{definition} \label{defmod} We will say that  (an isomorphism class of) $\tau=\bmat 1&*\\ &\chi\emat: G_{\Sigma} \to \GL_2(\bfF)$ is \emph{modular} if there exists $\lambda: \bfT \to \Oo$ such that $\tau_{\lambda} \cong \tau$ (in other words, if $\bfT_{\tau} \neq 0$). \end{definition}
\begin{rem} Note that the requirement in Definition \ref{defmod} is stronger than the usual definition of modularity which simply asks that $\tr \tau = \tr \tilde{\tau}_{\lambda}$ for $\tilde{\tau}_{\lambda}: G_{\Sigma} \to \GL_2(E)$. \end{rem}

\begin{thm} [Corollary 4.8 in \cite{BergerKlosin15}] \label{BK main}   Suppose that $\# H^1_{\Sigma}(F, \tilde{\chi}^{-1}\otimes E/\Oo) \leq \#\bfT/J$. 
Then there exists a basis $\mB$ of $H^1_{\Sigma}(F, \chi^{-1})$ such that each $\tau \in \mB$ is modular. \end{thm}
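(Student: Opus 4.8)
The plan is to reformulate the statement in terms of Galois cohomology classes built out of cuspidal congruences and then to argue that the hypothesis forces these classes to exhaust $H:=H^1_\Sigma(F,\chi^{-1})$ modulo $\varpi$. Write $\mH:=H^1_\Sigma(F,\tilde\chi^{-1}\otimes_\Oo E/\Oo)$; the hypothesis makes $\mH$ finite. Since isomorphism classes of non-semisimple Fontaine--Laffaille $\tau=\left(\begin{smallmatrix}1&*\\ &\chi\end{smallmatrix}\right)$ correspond bijectively to the lines of $H$, and such a $\tau$ is modular exactly when $\bfT_\tau\neq 0$, it suffices to show that the lines $\ell_\tau\subseteq H$ attached to the modular $\tau$'s span $H$: then any maximal linearly independent subfamily of $\{\ell_\tau\}$ is a basis $\mB$ each of whose members is modular.

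Before the main argument I would record two reductions. (i) From the exact sequence $0\to A[\varpi]\to A\xrightarrow{\varpi}A\to 0$ with $A:=\tilde\chi^{-1}\otimes_\Oo E/\Oo$, together with $H^0(F,A)=0$ (valid since $\chi$ is ramified at $\fp\mid p$, hence non-trivial), the identification $A[\varpi]\cong\bfF(\chi^{-1})$, and the compatibility of the Fontaine--Laffaille condition at $\fp\mid p$ with the $\varpi$-torsion of the Bloch--Kato crystalline condition (no condition being imposed away from $p$), one gets a canonical isomorphism $H\cong\mH[\varpi]$. (ii) Each $\bfT_\tau$ is a quotient of $\bfT$ carrying $J$ onto $J_\tau$, giving a map $\bfT/J\to\prod_{\tau\ \mathrm{mod}}\bfT_\tau/J_\tau$ which one checks is injective; and $\bfT/J$ is finite by Remark~\ref{finiteness1}.

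The heart of the proof is a lattice/cohomology construction of Ribet--Urban type. For each modular $\tau$, gluing the $\tilde\tau_\lambda$ over the finitely many $\lambda$ with $\tau_\lambda\cong\tau$ produces a representation $\rho_\tau\colon G_\Sigma\to\GL_2(\bfT_\tau)$ with $\Tr\rho_\tau(\Frob_\fq)=T_\fq$, reducing modulo $\fm_{\bfT_\tau}$ to $\tau$ itself (Proposition~\ref{genRibet} normalizes the lattices and Proposition~\ref{SW2.13} makes the residual class well defined); it is crystalline at $\fp\mid p$ because $2\le k\le p-1$. Writing $\rho_\tau=\left(\begin{smallmatrix}a&b\\ c&d\end{smallmatrix}\right)$, the ideal generated by the products $b(g)c(h)$ is the reducibility ideal of $\rho_\tau$, and it equals $J_\tau$ because $J$ is the Eisenstein ideal (mod $J_\tau$ the trace is $1+\tilde\chi$ by Chebotarev, and conversely). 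The cocycle furnished by the entry $c$ then produces, after the standard manipulations, an injection $\iota_\tau\colon\Hom_\Oo(\bfT_\tau/J_\tau,E/\Oo)\hookrightarrow\mH$ with image $\mH_\tau$, with the essential property that every class in $\mH_\tau$ reduces modulo $\varpi$ into the single line $\ell_\tau$ (all underlying extensions being residually $\cong\tau$). Assembling these compatibly via (ii) yields an injection $\Hom_\Oo(\bfT/J,E/\Oo)\hookrightarrow\mH$; since it is injective and $\#\Hom_\Oo(\bfT/J,E/\Oo)=\#\bfT/J\geq\#\mH$ by hypothesis, it is an isomorphism, and in particular $\mH=\sum_{\tau\ \mathrm{mod}}\mH_\tau$.

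Finally one passes to $\varpi$-torsion: by (i), $H\cong\mH[\varpi]$, and tracing the isomorphism $\mH\cong\Hom_\Oo(\bfT/J,E/\Oo)$ through the decomposition into the $\mH_\tau$, whose $\varpi$-torsion sits inside $\ell_\tau$, one concludes $H=\mH[\varpi]\subseteq\sum_{\tau\ \mathrm{mod}}\ell_\tau$, hence equality, which gives the modular basis. I expect the genuine obstacle to be the Ribet--Urban construction together with this last compatibility: producing $\iota_\tau$ with source the full local congruence module $\bfT_\tau/J_\tau$ (not merely its residue field), checking that the constructed classes are crystalline at $p$ and satisfy no condition away from $p$, and verifying that they split along the $\tau$'s in the precise way needed for the lines $\ell_\tau$ to span $H$ (this is where one must control the $\Oo$-module structure of the $\bfT_\tau/J_\tau$ and of the cokernel of $\bfT/J\to\prod_\tau\bfT_\tau/J_\tau$). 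Everything else is bookkeeping with finite $\Oo$-modules.
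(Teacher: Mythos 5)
The paper does not actually prove this statement: the displayed proof only verifies that Assumption~2.4 of \cite{BergerKlosin15} (vanishing of infinitesimal deformations of $1$ and of $\chi$, via $p\nmid\#\Cl_F$) holds, and then defers to Corollary~4.8 of that reference. You, by contrast, are attempting to reconstruct the entire cited argument from scratch, and as written the reconstruction has two serious problems.

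First, the ``assembling'' step is not only unjustified but cannot work in the regime where the theorem is interesting. Note that $J$ is generated by elements $T_\fq - (1+\tilde\psi(\fq)(N\fq)^{k-1})$ with $T_\fq$ generating $\bfT$ as an $\Oo$-algebra, so every $T_\fq$ becomes a scalar modulo $J$ and $\bfT/J$ is a \emph{cyclic} $\Oo$-module, say $\Oo/\eta$; hence $\Hom_\Oo(\bfT/J,E/\Oo)\cong\Oo/\eta$ has one-dimensional $\varpi$-torsion. By Proposition~\ref{3rdinequality} we have $\#\mH\ge\#\bfT/J$, so the hypothesis forces equality; if in addition an injection $\Hom_\Oo(\bfT/J,E/\Oo)\hookrightarrow\mH$ existed it would be a bijection, whence $\mH\cong\Oo/\eta$ would be cyclic and $H\cong\mH[\varpi]$ would be one-dimensional. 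That trivializes the statement and contradicts the whole point of the paper (the example in Section~\ref{Ex} has $\dim_\bfF H\ge2$ with $\mH\cong(\Oo/\varpi)^2$). So such an injection cannot exist in general, and the Urban cocycle argument behind Proposition~\ref{3rdinequality} must instead go through an auxiliary $\bfT$-module $\mT$ (with $\mT/J\mT$ not cyclic over $\Oo$) rather than through $\Hom_\Oo(\bfT/J,E/\Oo)$ directly. Relatedly, your reduction~(ii), the claimed injectivity of $\bfT/J\to\prod_\tau\bfT_\tau/J_\tau$, is not established anywhere; the cited Proposition~5.1 of \cite{BergerKlosin15} only gives $\#\bfT/J\ge\#\prod_\tau\bfT_\tau/J_\tau$, the direction one gets from surjectivity, not injectivity.

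Second, even granting an internal sum $\mH=\sum_\tau\mH_\tau$ with each $\mH_\tau[\varpi]=\ell_\tau$, the concluding step fails: taking $\varpi$-torsion does not commute with sums of submodules. One only has $\sum_\tau\mH_\tau[\varpi]\subseteq\bigl(\sum_\tau\mH_\tau\bigr)[\varpi]$, and the inclusion can be strict (e.g., two cyclic $\Oo/\varpi^2$-submodules of $\Oo/\varpi^2\oplus\Oo/\varpi$ summing to everything but with socles summing to a line). So ``$\mH=\sum\mH_\tau$ implies $H=\sum\ell_\tau$'' is precisely the nontrivial content that needs an argument, and your proof waves it away. The actual proof in \cite{BergerKlosin15} has to control the $\Oo$-module structure of the Urban modules $\mT_\tau/J_\tau\mT_\tau$ and their independence inside $\mH$, not just compare cardinalities, and it needs the infinitesimal-rigidity Assumption~2.4 that this paper's proof verifies but that your sketch never invokes.
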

\begin{proof} Let us only explain why Assumption 2.4 in \cite{BergerKlosin15} used in Corollary 4.8 therein is satisfied. For this it is enough to show that there are no non-trivial infinitesimal deformations of $1$, respectively $\chi$. This can be proved exactly as \cite{BergerKlosin13} Proposition 9.5 since $p \nmid \#\Cl_F$. \end{proof}

\begin{rem} The assumption that $\# H^1_{\Sigma}(F, \tilde{\chi}^{-1}\otimes E/\Oo) \leq  \#\bfT/J$ is used in the proof of Corollary 4.8 in \cite{BergerKlosin15}. The left-hand side of the inequality encodes certain  crystalline $G_{\Sigma}$-extensions of torsion $\Oo$-modules  while the right-hand side encodes corresponding modular extensions (arising from Eisenstein congruences). Hence it can be viewed as in some sense ensuring an abundance of reducible modular deformations of appropriate type. Roughly speaking, the Selmer group on the left hand side should be bounded by a certain $L$-value  by virtue of the relevant case of the Bloch-Kato Conjecture. Then the inequality in the assumption reflects the belief that Eisenstein congruences should be controlled by the same $L$-value. In section \ref{FQ} we will prove that these inequalities are often satisfied when $F=\bfQ$. 
\end{rem}

Let $\fT$ denote the set of isomorphism classes of residual Galois representations of the form \eqref{form1}. Let $\fT_{\rm mod}$ be the subset of $\fT$ consisting of isomorphism classes which are modular. Note that by Proposition \ref{genRibet} each element of $\fT_{\rm mod}$ can be identified with a line in $H^1_{\Sigma}(\bfQ, \chi^{-1})$ and Theorem \ref{BK main} gives a sufficient condition for the existence of at least $\dim_{\bfF}H^1_{\Sigma}(F, \chi^{-1})$-many such lines. These lines span the Selmer group, but a natural question to ask is if one could strengthen the conditions of Theorem \ref{BK main} to guarantee the existence of even more modular lines. This is achieved by the following proposition which is the first main result of this paper.

\begin{prop} \label{main1}   Suppose that  $\# H^1_{\Sigma}(F, \tilde{\chi}^{-1}\otimes E/\Oo) \leq  \#\bfT/J$. If $J_{\tau}$ is principal for every $\tau \in \fT_{\rm mod}$ but $J$ is not principal, then the set $\fT_{\rm mod}$ of modular isomorphism classes has cardinality strictly greater than $\dim_{\bfF} H^1_{\Sigma}(F, \chi^{-1})$. \end{prop}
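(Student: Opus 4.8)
The plan is to set up a dichotomy on whether $\fT_{\rm mod}$ has exactly $n := \dim_{\bfF} H^1_{\Sigma}(F,\chi^{-1})$ elements or strictly more, and to show that the first possibility forces $J$ to be principal, contradicting the hypothesis. First I would invoke Theorem~\ref{BK main}, whose hypothesis is exactly our assumption $\# H^1_{\Sigma}(F,\tilde\chi^{-1}\otimes E/\Oo) \leq \#\bfT/J$, to obtain a basis $\mB$ of $H^1_{\Sigma}(F,\chi^{-1})$ consisting of modular classes; thus $\fT_{\rm mod}$ contains at least $n$ lines, namely those in $\mB$. Assume for contradiction that $\#\fT_{\rm mod} = n$, so that $\fT_{\rm mod} = \mB$ is precisely a basis. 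The idea is that, since every Hecke eigensystem $\lambda : \bfT \to \Oo$ factors through exactly one $\bfT_\tau$ with $\tau = \tau_\lambda \in \fT_{\rm mod}$ (using Proposition~\ref{SW2.13}/\ref{SW2.13}'s disjointness $\Phi_{\tau,E}\cap\Phi_{\tau',E}=\emptyset$), the natural map $\bfT \to \prod_{\tau \in \fT_{\rm mod}} \bfT_\tau$ is injective, and in fact—after possibly controlling the kernel, which one shows is trivial because $\bfT$ is reduced and its eigensystems exhaust $\prod_\tau\bfT_\tau$ up to the relevant congruences—one gets a good handle on $\bfT$ in terms of the $\bfT_\tau$.

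The crux is to relate principality of $J$ to principality of the $J_\tau$ together with a combinatorial/linear-algebra input coming from the $\mB$-structure of $\fT_{\rm mod}$. Concretely, $J/\fm_{\bfT}J \cong \bfT/J \otimes_{\bfT}(J/\fm_{\bfT}J)$ has $\bfF$-dimension equal to the minimal number of generators of $J$ (Nakayama), and similarly for each $J_\tau$. Each $J_\tau$ being principal means $\dim_{\bfF}(J_\tau/\fm J_\tau) \le 1$. The point is that the classes $\tau \in \fT_{\rm mod}$, viewed inside $H^1_{\Sigma}(F,\chi^{-1}) = \Hom(\bfT, \ldots)$-type data, interact with the generator count of $J$: under the injection $\bfT \hookrightarrow \prod_{\tau}\bfT_\tau$, the ideal $J$ maps into $\prod_\tau J_\tau$, and a reducibility/tangent-space computation shows that if $\fT_{\rm mod}$ were a \emph{basis} of the Selmer group (rather than a spanning set with redundancy), then the ``gluing data'' among the $\bfT_\tau$ along $J$ is rigid enough to force $J$ itself to be generated by a single element—one builds the generator out of the generators of the $J_\tau$ and the basis $\mB$. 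Hence $J$ is principal, contradicting the hypothesis, so $\#\fT_{\rm mod} > n$.

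The main obstacle I anticipate is precisely this last implication: proving that $\#\fT_{\rm mod} = n$ (together with all $J_\tau$ principal) forces $J$ principal. This requires understanding how $J$ sits inside $\prod_\tau J_\tau$ and why the ``number of lines equals $n$'' hypothesis removes exactly the obstruction to patching the local generators into a global one; I would expect this to hinge on a careful analysis of the reducibility ideal of $\rho_\tau : G_\Sigma \to \GL_2(\bfT_\tau)$ and its relation to $J_\tau$ (the principality of which, under $\dim_{\bfF}H^1_{\Sigma}(\bfQ,\chi)=1$, is established later in the paper in Section~\ref{Ideal of reducibility and its principality}, but here is taken as a hypothesis), combined with the observation that a basis of the Selmer group cannot carry any ``extra'' relation. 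The remaining steps—applying Theorem~\ref{BK main}, the disjointness of the $\Phi_{\tau,E}$, finiteness of $\bfT/J$ from Remark~\ref{finiteness1}, and Nakayama counting—are routine.
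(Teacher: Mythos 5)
Your high-level strategy matches the paper: assume $\#\fT_{\rm mod}=n$ for contradiction, observe via Theorem~\ref{BK main} that $\fT_{\rm mod}$ then \emph{is} a modular basis of $H^1_{\Sigma}(F,\chi^{-1})$, and argue that this forces $J$ to be principal, contradicting the hypothesis. However, you explicitly flag the crux --- that $\#\fT_{\rm mod}=n$ together with principality of all $J_\tau$ forces principality of $J$ --- as something you expect but cannot prove, and your proposed route (``building a generator out of the generators of the $J_\tau$ and the basis $\mB$'' via a ``reducibility/tangent-space computation'' or ``gluing data'' argument) is not what the paper does and is not developed far enough to constitute a proof. This is a genuine gap at the central step.

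The paper fills this gap using congruence-module machinery that does not appear in your proposal. Concretely: (i) Proposition 5.1 of \cite{BergerKlosin15} gives the unconditional inequality $\#\bfT/J \geq \#\prod_{\tau\in\fT_{\rm mod}}\bfT_\tau/J_\tau$; (ii) if $\#\fT_{\rm mod}=n$ then any modular basis must be a scaling of $\mB$ (``projectively unique''), and Proposition 5.4 of \cite{BergerKlosin15} upgrades the inequality in (i) to an equality $\#\bfT/J = \#\prod_\tau\bfT_\tau/J_\tau$; and (iii) Corollary 2.7 of \cite{BergerKlosinKramer14} says that this equality of orders, together with the assumed principality of each $J_\tau$, forces $J$ itself to be principal. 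The ``basis carries no extra relation'' intuition you gesture at is captured precisely by projective uniqueness in step (ii), and the passage from the product decomposition to principality of $J$ is not a direct Nakayama/tangent-space count but a delicate order comparison; without the cited results the implication does not follow from the ingredients you list. Also note that the injectivity you assert for $\bfT\to\prod_\tau\bfT_\tau$ is not used in the paper's argument and is not quite the right thing to aim for --- what matters is comparing the orders of the congruence modules $\bfT/J$ and $\prod_\tau\bfT_\tau/J_\tau$, not the algebras themselves.
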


\begin{proof} Let us first note that by Remark \ref{finiteness1} we have that $\bfT$ is finitely generated as an $\Oo$-module and $\#\bfT/J < \infty$, hence the results of \cite{BergerKlosin15} and \cite{BergerKlosinKramer14} apply.
By Proposition 5.1 in \cite{BergerKlosin15} we have that $$\#\bfT/J \geq \#\prod_{\tau \in \fT_{\rm mod}} \bfT_{\tau}/J_{\tau}.$$ By Theorem \ref{BK main} we know that there exists a modular basis $\mB$ of $H^1_{\Sigma}(F, \chi^{-1})$, so in particular $\#\fT_{\rm mod} \geq \dim_{\bfF}H^1_{\Sigma}(F, \chi^{-1})$. Suppose that in fact equality holds. Since any modular extension gives rise to an element of $\fT_{\rm mod}$, we see that any other modular basis of $H^1_{\Sigma}(F, \chi^{-1})$ must be obtained from  $\mB$ by scaling its elements, i.e., $\mB$ is `projectively unique' in the terminology of \cite{BergerKlosin15}. Then by Proposition 5.4 in \cite{BergerKlosin15} we get that $\#\bfT/J =\#\prod_{\tau \in \fT_{\rm mod}} \bfT_{\tau}/J_{\tau}.$ This however implies that $J$ is principal by Corollary 2.7 of \cite{BergerKlosinKramer14} - note that principality of $J_{\tau}$ is necessary for the application of the corollary (cf.~p. 73 of \cite{BergerKlosinKramer14}). 
\end{proof}

For future use we note that the opposite inequality $\# H^1_{\Sigma}(F, \tilde{\chi}^{-1}\otimes E/\Oo) \geq  \#\bfT/J$ always holds:
\begin{prop} \label{3rdinequality} One has
$$\# H^1_{\Sigma}(F, \tilde{\chi}^{-1}\otimes E/\Oo) \geq  \#\bfT/J.$$
\end{prop}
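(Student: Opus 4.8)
The plan is to construct, from a cuspidal Hecke eigensystem giving a congruence with the Eisenstein family, an explicit nontrivial element of $H^1_{\Sigma}(F, \tilde\chi^{-1}\otimes E/\Oo)$ whose order is at least $\#\bfT/J$, thereby producing the reverse inequality. Concretely, I would first recall that $\bfT$ acts faithfully on $\mS_k(\fN,\tilde\psi)_{\fm}$ and that the Eisenstein ideal $J$ measures congruences between cusp forms in this localized space and the Eisenstein series with Hecke eigenvalues $T_{\fq}\mapsto 1+\tilde\psi(\fq)(N\fq)^{k-1}$. The quotient $\bfT/J$ is finite by Remark \ref{finiteness1}, so it is a finite $\Oo$-module; write $\varpi^t$ for its exponent (or more precisely work with the full module structure).

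\textbf{Main construction.} The key step is to produce a big Galois representation. Since $\bfT$ is reduced and finite flat over $\Oo$ after inverting $\varpi$, one has (by pasting the $\tilde\tau_\lambda$ over all $\lambda$, or directly by a pseudo-representation/generalized-matrix-algebra argument) a continuous representation $\rho: G_{\Sigma}\to \GL_2(\bfT\otimes_{\Oo} E)$ with $\tr\rho(\Frob_{\fq}) = T_{\fq}$ for $\fq\notin\Sigma$ and $\det\rho = \tilde\chi$. Because $\tr\rho\equiv 1+\tilde\chi\pmod J$, a lattice argument in the style of Ribet/Urban (exactly the mechanism behind Proposition \ref{genRibet}, now over the Artinian ring $\bfT/J$ rather than $\Oo$) produces a $G_{\Sigma}$-stable lattice so that $\rho\bmod J$ has the shape $\bmat 1 & * \\ 0 & \tilde\chi\emat$ with the upper-right entry $c: G_{\Sigma}\to \tilde\chi\otimes(\text{ideal of }\bfT/J)$ a cocycle whose class in $H^1(G_{\Sigma}, \tilde\chi\otimes \bfT/J)$ is nonzero — indeed its ``span'' recovers all of $\bfT/J$, since if the extension were trivial or supported on a proper submodule the representation would be reducible over a smaller quotient, contradicting that every $\lambda$ occurring is irreducible (Proposition \ref{genRibet}) and that $\bfT$ is generated by traces. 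One then checks the local condition: at primes $\fp\mid p$ the forms are Fontaine-Laffaille (since $2\le k\le p-1$), so the extension class is crystalline at $p$; at primes in $\Sigma$ not over $p$ no condition is imposed. Hence the cocycle lands in $H^1_{\Sigma}(F,\tilde\chi\otimes \bfT/J)$, and after twisting appropriately (the Selmer group in the statement is for $\tilde\chi^{-1}$, so one works with the transpose/contragredient extension $\bmat \tilde\chi^{-1} & * \\ 0 & 1\emat$, equivalently uses $\rho^\vee\otimes\tilde\chi$) we obtain an $\Oo$-module map $\Hom_{\Oo}(\bfT/J, E/\Oo)\hookrightarrow H^1_{\Sigma}(F,\tilde\chi^{-1}\otimes E/\Oo)$, giving $\#H^1_{\Sigma}(F,\tilde\chi^{-1}\otimes E/\Oo)\ge \#\Hom_\Oo(\bfT/J,E/\Oo) = \#\bfT/J$.

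\textbf{Injectivity / nontriviality.} The step I expect to be the main obstacle is verifying that the map from $\Hom_\Oo(\bfT/J,E/\Oo)$ (equivalently, from the whole module $\bfT/J$ under Pontryagin duality) into the Selmer group is \emph{injective}, i.e. that the constructed extension genuinely ``sees'' all of $\bfT/J$ and not merely a quotient. This is precisely where one needs that the residual representations $\tau_\lambda$ are non-semisimple (so the $*$-entry is forced to be nonzero) together with a careful choice of lattice so that reduction modulo each power of $\varpi$ in the filtration of $\bfT/J$ yields a genuinely new extension class; the bookkeeping is the reducibility-ideal argument already used in \cite{BergerKlosin15} and \cite{Urban01}. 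I would handle it by the standard device: pick a good lattice realizing the full $\bfT/J$-module structure on the ``$b$-part'' of the generalized matrix algebra attached to $\rho$, invoke that $\bfT$ is generated over $\Oo$ by the traces $T_\fq$ to see the $b$-part generates $\bfT/J$ as an ideal, and conclude the associated cohomology class has the required order. The remaining verifications — continuity of the cocycle, the crystalline condition at $p$ via Fontaine-Laffaille, and triviality of local conditions away from $p$ — are routine given the setup in Section \ref{Setup}. (In fact this inequality, being the ``easy'' or arithmetic-to-modular direction, is essentially the content of one half of the construction already implicit in \cite{BergerKlosin15}; the proof here would cite Proposition \ref{genRibet} and the lattice formalism of \cite{BergerKlosin15} and organize the estimate.)
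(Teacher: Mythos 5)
Your proposal is correct and takes essentially the same approach as the paper, whose proof simply cites Urban's lattice construction as carried out in \cite{BergerKlosin15} Lemma 4.4. One small slip: the extension shape $\bmat 1 & * \\ 0 & \tilde\chi\emat$ already classifies a class in $H^1(G_\Sigma, \tilde\chi^{-1})$, since $\Ext^1(\tilde\chi,1)\cong H^1(G,\Hom(\tilde\chi,1))=H^1(G,\tilde\chi^{-1})$, so the contragredient twist you introduce at the end is unnecessary.
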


\begin{proof}
This is proved by applying Urban's lattice construction, as explained in the proof of \cite{BergerKlosin15} Lemma 4.4 (we do not need the assumptions 2.5 and 4.2 there as we just want an inequality of orders).
\end{proof}

In the next section we show that if one assumes one-dimensionality of the ``opposite'' Selmer group $H^1_{\Sigma}(F, \chi)$ then principality of each $J_{\tau}$ follows. 

\section{Ideal of reducibility and its principality} \label{Ideal of reducibility and its principality}

Let $G$ be a group and $A$ be a complete Noetherian local $\Oo$-algebra (with residue field $\bfF$) which is reduced.  Set $R=A[G]$. Let $\tau_1, \tau_2: G \to \GL_{n_i}(\bfF)$ be two absolutely irreducible representations with $\tau_1 \not\cong \tau_2$. 
Set $n:=n_1+n_2$ and assume that $n!$ is invertible in $A$.
Let $T$ be a (residually multiplicity free) pseudo-representation $T: R \to A$ of dimension $n$. Following \cite{BellaicheChenevierbook} we define the \emph{ideal of reducibility of $T$} to be the  smallest ideal $I$ of $A$ such that $T = T_1 + T_2$ mod $I$, where $T_1$, $T_2$ are pseudo-representations with the property that $T_i = \tr \tau_i$ mod $\fm_A$. 
Let $\rho: R \to M_{n}(A)$ be an $A$-algebra homomorphism.  Suppose that the mod $\fm_A$ reduction $\ov{\rho}: R \to M_{n}(\bfF)$ of $\rho$ has the form $$\ov{\rho} =\bmat \tau_1 & * \\ & \tau_2 \emat $$ and is non-semi-simple. We define the ideal of reducibility of $\rho$ to be the ideal of reducibility of the pseudo-representation $\tr \rho$.

 Write $\mF:=\textup{Frac}(A)$, the total ring of fractions of $A$, which is a finite product of fields $\prod_{i=1}^s A_i$ (cf.~e.g., \cite{BellaicheChenevierbook}, section 1.7). 
Fix $S_{ij} \subset \Ext^1_{\bfF[G]}(\tau_i, \tau_j)$  one-dimensional subspaces for $(i,j) \in \{(1,2), (2,1)\}$. 
Assume that the pseudo-representation $\tr \rho_i: R \to A_i$ is absolutely irreducible for every $i=1,2, \dots, s$. Moreover, assume that $\ov{\rho}: R \to M_{n}(\bfF)$ which factors through $\bfF[G] \to M_{n}(\bfF)$ gives rise to a non-trivial element in $S_{21}$.

\begin{prop} [\cite{BellaicheChenevierbook}, Proposition 1.7.4] \label{1.7.4} One has $$\dim_{\bfF} \Ext^1_{(R/\ker \rho)/\fm_A(R/\ker \rho)} (\tau_2, \tau_1)=1.$$ \end{prop}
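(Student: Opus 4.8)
The plan is to reduce everything to the quotient $B := (R/\ker\rho)/\fm_A(R/\ker\rho)$, which is a finite-dimensional $\bfF$-algebra through which the reduction $\ov\rho : R \to M_n(\bfF)$ factors, and to understand the extensions of $\tau_2$ by $\tau_1$ that occur inside $B$-modules. First I would recall the general structure theory from \cite{BellaicheChenevierbook} Section 1: since $\tr\rho_i$ is absolutely irreducible over each factor $A_i$ of $\mF = \textup{Frac}(A)$, over the total ring of fractions $\rho \otimes_A \mF$ becomes a product of matrix algebras, and the image of $R$ in $M_n(A)$ is, up to the ideal of reducibility, a "generalized matrix algebra" with a $2\times2$ block structure adapted to $\tau_1,\tau_2$. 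The key numerical input is that the reducibility ideal $I$ of $\rho$ governs the off-diagonal blocks, and Proposition 1.5.1 / 1.7.4 of \cite{BellaicheChenevierbook} identifies $\Ext^1_B(\tau_2,\tau_1)$ with (the $\bfF$-dual of) the $(2,1)$-block of the associated GMA modulo $\fm_A$, equivalently with $\Hom_{\bfF}(I/\fm_A I \cdot(\ldots), \bfF)$ — the precise statement being that this $\Ext$ group has dimension equal to the minimal number of generators of the relevant off-diagonal module.

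The main steps, in order, would be: (1) Pass to $B$ and note $B$ is a finite $\bfF$-algebra with a surjection onto $M_{n_1}(\bfF)\times M_{n_2}(\bfF)$ coming from $\tau_1\oplus\tau_2$; the non-semisimplicity of $\ov\rho$ together with the hypothesis that $\ov\rho$ gives a non-trivial class in $S_{21}\subset\Ext^1_{\bfF[G]}(\tau_2,\tau_1)$ means the off-diagonal $(2,1)$-block of $B$ is non-zero, so $\dim_{\bfF}\Ext^1_B(\tau_2,\tau_1)\geq 1$. (2) For the reverse inequality, invoke that $A$ is reduced with total fraction ring a product of fields over each of which $\rho$ is absolutely irreducible: this forces the GMA attached to $\rho$ to be "as small as possible" in the $(2,1)$ direction, and the upper bound $\dim_{\bfF}\Ext^1_B(\tau_2,\tau_1)\leq 1$ is exactly the content of \cite{BellaicheChenevierbook} Proposition 1.7.4 applied in our situation — so really the proof is a verification that the hypotheses of that Proposition hold (absolute irreducibility of each $\tr\rho_i$, residual multiplicity one, the chosen line $S_{21}$). (3) Conclude equality.

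Concretely I expect the write-up to be short: cite Proposition 1.7.4 of \cite{BellaicheChenevierbook} directly, observe that our standing assumptions (the $\tr\rho_i$ absolutely irreducible for $i=1,\dots,s$; $\tau_1\not\cong\tau_2$ absolutely irreducible; $n!$ invertible in $A$; $A$ reduced; $\ov\rho$ non-semisimple giving a non-trivial element of the one-dimensional space $S_{21}$) are precisely the input that Proposition requires, and translate its conclusion — phrased there in terms of the reducibility ideal and the GMA structure — into the stated $\Ext$ dimension $1$. The one point requiring care is the bookkeeping of which $\Ext$-group is which: \cite{BellaicheChenevierbook} tends to fix conventions so that the "reducible-with-a-specified-off-diagonal" data is recorded by $\Ext^1(\tau_2,\tau_1)$ when $\ov\rho$ is upper-triangular with $\tau_1$ in the top-left, and one must make sure the chosen fixed line $S_{21}$ and the non-semisimplicity hypothesis match up with the direction in which the reducibility ideal is taken.

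The hard part will be nothing deep — it is entirely a matter of checking that the abstract framework of \cite{BellaicheChenevierbook} Section 1.7 applies verbatim here and of matching conventions; there is no new ingredient, so the proof can legitimately be a one-paragraph citation once the hypotheses are lined up. If a self-contained argument were wanted instead, the obstacle would be re-deriving the GMA structure theorem (that over a reduced $A$ with generically irreducible $\rho$, the image of $R$ is a GMA and its $(2,1)$-block modulo $\fm_A$ computes $\Ext^1_B(\tau_2,\tau_1)$), which is Proposition 1.3.x–1.7.x of \cite{BellaicheChenevierbook} and not something to reprove.
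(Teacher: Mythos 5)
Your approach is the right one — reduce to citing \cite{BellaicheChenevierbook} Proposition 1.7.4 and verify that its hypotheses (residual multiplicity one, $n!$ invertible, $A$ reduced, absolute irreducibility of each generic component $\tr\rho_i$, a fixed non-trivial extension class) are met. That is exactly what the paper does. But you missed the single wrinkle the paper's proof actually addresses, and it is a real one: the statement you are asked to prove is about $\Ext^1$ over $(R/\ker\rho)/\fm_A(R/\ker\rho)$, whereas \cite{BellaicheChenevierbook} Proposition 1.7.4 is stated for $(R/\ker T)/\fm_A(R/\ker T)$ where $T=\tr\rho$ is the associated pseudorepresentation and $\ker T$ is its (two-sided) kernel in the sense of Bella\"iche--Chenevier. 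These are not a priori the same object: one always has $\ker\rho\subseteq\ker T$, but the reverse inclusion is not automatic, and without it the citation does not apply verbatim. A direct citation, as in your step (2), therefore leaves a gap.

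The fix, which is the entire content of the paper's two-line proof, is to observe that $\ker\rho=\ker T$ here. This follows from \cite{BellaicheChenevierbook} Proposition 1.6.4 combined precisely with the standing assumption that each $\tr\rho_i:R\to A_i$ is absolutely irreducible over every factor $A_i$ of $\mF=\mathrm{Frac}(A)$: generic absolute irreducibility forces the GMA $R/\ker T$ to embed in $M_n(\mF)$ compatibly with $\rho$, so the kernels coincide. So your "bookkeeping" worry was pointed at the wrong place — the $\Ext^1(\tau_2,\tau_1)$ direction is fine and matches B--C's conventions for $\ov\rho$ upper-triangular with $\tau_1$ in the top-left; the thing that actually needs checking is the $\ker\rho$ versus $\ker T$ identification.
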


\begin{proof} Let us only note that Proposition 1.7.4 in \cite{BellaicheChenevierbook} concerns $\ker T$ instead of $\ker \rho$. However, it follows from Proposition 1.6.4 of \cite{BellaicheChenevierbook} along with our assumption on absolute irreducibility of $\tr \rho_i$ that $\ker \rho = \ker T$. \end{proof}

The goal of this section is to give a sufficient condition guaranteeing that $I$ is principal. Before we begin  let us briefly explain the method.   If the dimension of $\Ext^1_{(R/\ker \rho)/\fm_A(R/\ker \rho)} (\tau_1, \tau_2)$ (``opposite direction'') is also one, $I$ would be principal by Proposition 1.7.5 of \cite{BellaicheChenevierbook}. To prove this  
we use Urban's  construction to obtain an $A$-module $\mT \oplus A$ together with a $G$-action which modulo $\fm_A$ gives a non-split extension in the ``opposite direction''. If $\mT=A$, then this  extension is a reduction of a representation of $G$ into $\GL_2(A)$ and Proposition 1.7.4 in \cite{BellaicheChenevierbook} gives us the desired one-dimensionality. In the proof of Theorem \ref{prin} we formulate a condition that allows us to conclude that $\mT/\fm_A\mT=\bfF$ and essentially deduce from this  that $\mT=A$ by Nakayama's Lemma.

From now on assume that $A$ is finite over $\Oo$. We will later apply this for $A=\bfT_{\tau}$ for which this assumption is satisfied  (cf.~Remark \ref{finiteness1}). 
Then by Theorem 1.1 in \cite{Urban01} there exists an $A$-lattice $\mL$ in $\mF^{n}$ and an $A$-lattice $\mT$ in $\mF$ such that \be \label{lattice} 0 \to \tau_2 \otimes_A \mT/\fm_A\mT \to \mL\otimes_A \bfF\to \tau_1 \otimes_A \bfF \to 0.\ee
As in \cite{Urban01} (see also \cite{Klosin09}, p. 159-160) we get a cocycle 
$c \in H^1(G, \Hom(\tau_1, \tau_2)\otimes \mT/\fm_A\mT)$ and a 
 map $$\iota: \Hom(\mT/\fm_A\mT, \bfF) \to \Ext^1_{\bfF[G]}(\tau_1, \tau_2) = H^1(G, \Hom(\tau_1, \tau_2)), \quad f \mapsto (1\otimes f)(c),$$ 
 which is injective by Lemma 4.5 in \cite{BergerKlosin15}.
\begin{thm} \label{prin} If the image of $\iota$ lies in $S_{12}$, then $I$ is principal. \end{thm}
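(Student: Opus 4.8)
The plan is to leverage the injection $\iota: \Hom(\mT/\fm_A\mT, \bfF) \hookrightarrow \Ext^1_{\bfF[G]}(\tau_1,\tau_2)$ together with the hypothesis that its image lies in the fixed one-dimensional subspace $S_{12}$. Since $\iota$ is injective and its target now has dimension at most $1$, we conclude $\dim_{\bfF} \Hom(\mT/\fm_A\mT, \bfF) \leq 1$, i.e.\ $\mT/\fm_A\mT$ is either $0$ or $\bfF$. It cannot be $0$: the lattice $\mT \subset \mF$ is nonzero (it is an $A$-lattice in the total ring of fractions), so by Nakayama's Lemma $\mT/\fm_A\mT \neq 0$. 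Hence $\mT/\fm_A\mT \cong \bfF$ is $1$-dimensional, so $\mT$ is generated by a single element over $A$ by Nakayama; being also a lattice in $\mF$ (hence torsion-free and of full rank) this forces $\mT \cong A$ as an $A$-module.

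With $\mT \cong A$, the short exact sequence \eqref{lattice} becomes an extension of $\tau_1 \otimes_A \bfF$ by $\tau_2 \otimes_A \bfF$ with $\mL$ an $A$-lattice in $\mF^n$ on which $G$ acts, and reducing mod $\fm_A$ we obtain a representation $G \to \GL_n(\bfF)$ of the form $\bmat \tau_2 & * \\ & \tau_1 \emat$ that is non-split (non-triviality of the extension class $c$ comes from the injectivity of $\iota$ and $\mT/\fm_A\mT = \bfF \neq 0$). The point is that this extension is now the mod $\fm_A$ reduction of an honest matrix representation $R \to M_n(A)$ — namely the one attached to the $G$-action on the lattice $\mL \cong \mT \oplus A$-scale — whose trace is again $\tr\rho$ (the pseudo-representation is unchanged since $\mL$ and the space of $\rho$ have the same trace), and which realizes a nonzero class in $\Ext^1$ in the ``opposite direction'' $(\tau_2,\tau_1)$... wait, more precisely in the direction $(\tau_1,\tau_2)$ relative to the original convention. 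Applying Proposition \ref{1.7.4} to this representation (with the roles of $\tau_1,\tau_2$ interchanged, which is legitimate as the hypotheses there are symmetric in the two irreducible constituents) yields
$$\dim_{\bfF}\Ext^1_{(R/\ker\rho)/\fm_A(R/\ker\rho)}(\tau_1,\tau_2) = 1.$$
Combined with Proposition \ref{1.7.4} in its original form, which gives $\dim_{\bfF}\Ext^1_{(R/\ker\rho)/\fm_A(R/\ker\rho)}(\tau_2,\tau_1) = 1$, both ``directions'' of the Ext group for the quotient algebra $R/\ker\rho$ are one-dimensional.

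Finally, one invokes Proposition 1.7.5 of \cite{BellaicheChenevierbook}: when both $\dim_{\bfF}\Ext^1_{(R/\ker T)/\fm_A(R/\ker T)}(\tau_i,\tau_j) = 1$ for $(i,j) \in \{(1,2),(2,1)\}$ (recall $\ker\rho = \ker T$ by the argument in the proof of Proposition \ref{1.7.4}, using absolute irreducibility of each $\tr\rho_i$), the ideal of reducibility $I$ of $T$ — equivalently of $\rho$ — is principal. This completes the proof. The main obstacle I anticipate is the bookkeeping in the middle step: verifying carefully that the $A$-module $\mT$ being free of rank one really does promote the extension \eqref{lattice} to a genuine $\GL_n(A)$-valued representation whose reduction realizes a nonzero Ext class in the opposite direction, and that Proposition \ref{1.7.4} applies with the constituents swapped — i.e.\ checking the absolute irreducibility and non-splitness hypotheses are symmetric and survive this construction. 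The Nakayama and dimension-count portions are routine; the crux is the structural claim that ``$\iota$ lands in a line'' $\Rightarrow$ ``$\mT$ is cyclic'' $\Rightarrow$ ``Urban's lattice gives a matrix representation in the opposite order.''
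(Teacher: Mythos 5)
Your proposal follows essentially the same route as the paper, and the overall structure is correct: use injectivity of $\iota$ and $\dim S_{12}=1$ to get $\mT/\fm_A\mT\cong\bfF$, Nakayama to get $\mT$ cyclic, upgrade to $\mT\cong A$, conclude that \eqref{lattice} is the reduction of a $\GL_n(A)$-valued representation realizing a non-split extension in the direction $(\tau_1,\tau_2)$, apply Proposition \ref{1.7.4} once more with the constituents swapped (using $\ker\rho=\ker T$ to identify the quotient algebras), and finish with \cite{BellaicheChenevierbook} Proposition 1.7.5.

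The one step you dismiss as routine --- ``cyclic + lattice $\Rightarrow$ $\mT\cong A$'' --- is where the paper actually spends most of its ink, and you should not take it for granted. Over a non-domain the slogan ``torsion-free and full rank $\Rightarrow$ free'' has no standard meaning, and cyclic torsion-free modules need not be free (e.g.\ the idempotent ideal $k\times 0$ in $A=k\times k$ is cyclic and embeds in $\mF=A$ yet is not free). What rescues the claim is precisely the lattice condition $\mT\otimes_A\mF=\mF$: writing $\mT=Ax$ and $x=(x_1,\dots,x_s)\in\prod_i A_i=\mF$, the lattice condition forces every component $x_i$ to be nonzero, which in turn forces $\Ann_A(x)=0$ because $A$ is reduced and embeds in $\prod_i A_i$; hence $A\to\mT$, $a\mapsto ax$, is an isomorphism. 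This is the paper's argument and it genuinely uses the product-of-fields decomposition of the total ring of fractions; it is the crux you correctly flagged at the end of your write-up, so you should supply it rather than wave it off. With that filled in, your proof matches the paper's.
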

\begin{proof} We have $\mT/\fm_A \mT = \bfF^s$ for some $s \in \bfZ_+$. Since $S_{12}=\bfF$, the injectivity of $\iota$ implies that $s=1$. Hence (\ref{lattice}) itself is an element of $S_{12}$. Moreover by a  complete version of Nakayama's Lemma, $\mT$ is generated by 1 element, say $x \in \mT$, as an $A$-module. We claim that this implies that $\mT=A$. Indeed, consider the $A$-module map $\phi: A \twoheadrightarrow \mT$ given by $r \mapsto rx$. We will show that this map is injective. Suppose $a$ is in the kernel. Then $a$ annihilates $\mathcal{T}$. However, by definition of $\mT$ and the fact that $A$
is reduced and hence embeds into its ring of fractions $\mF$ we can consider $x$ and $a$ as elements of $\mF =\prod_i A_i$, i.e., write them as $a = (a_1, a_2, \dots,  a_s)$ and
$x = (x_1, x_2, \dots, x_s)$. We want to show that $a =0$.

Let $\mJ$ be the set of $i$ such that $a_i \neq  0$.
First note that if $j \in \mJ$, then $xA \otimes_A A_j = 0$. Indeed, if $j \in \mJ$, then since $ax=0$, we must have $x_j=0$, so $x\alpha \otimes 1 = x\alpha a \otimes 1/a_j=0$ for all $\alpha \in A$. 
Secondly note that if $j \not\in \mJ$, then $xA \otimes_A A_j$ is of dimension $\leq 1$ as an $A_j$-vector space. Indeed, let $\sum_k x \alpha_k \otimes  \beta_k \in xA \otimes_A A_j$ and write $\pi_j$ for the map $A \to A_j$. Then $$\sum_k x \alpha_k \otimes  \beta_k  = \sum_k x \otimes \pi_j(\alpha_k) \beta_k = x \otimes \left( \sum_k \pi_j(\alpha_k) \beta_k\right) = (x\otimes 1) \cdot \left(\sum_k \pi_j(\alpha_k )\beta_k\right) ,$$ hence indeed $xA \otimes_A A_j$ is spanned over $A_j$ by $x \otimes 1$.

Thus we get  $$\mT 
\otimes_A \mF = xA \otimes_A \prod_i A_i = \prod_i xA \otimes_A A_i = \prod_{i \not\in \mJ} xA \otimes_A A_i $$ and each piece of the product is either 0 or $A_j$. Since $\mT$ is a lattice we must have $\mT\otimes_A\mF=\mF=\prod_i A_i$, and this forces $\mJ=\emptyset.$

Hence $\mL \cong A^{n}$, so (\ref{lattice}) is the reduction of a representation $R \to M_{n}(A)$. Thus by \cite{BellaicheChenevierbook}, Proposition 1.7.4, we get that $$\dim_{\bfF} \Ext^1_{(R/\ker \rho)/\fm_A(R/\ker \rho)} (\tau_1, \tau_2)=1$$ and thus by [loc.cit.], Proposition 1.7.5 the ideal $I$ is principal. 
\end{proof}

\begin{lemma} \label{imageT} Let $\tau \in \fT_{\rm mod}$. There exists a representation $\rho_{\tau}: G_{\Sigma} \to \GL_2(\bfT_{\tau})$ that reduces to $\tau$ modulo $\fm_{\bfT_{\tau}}$. 
\end{lemma}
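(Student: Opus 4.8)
The plan is to produce $\rho_{\tau}$ by gluing together the Galois representations $\tilde{\tau}_{\lambda}$ attached to the Hecke eigensystems $\lambda$ with $\tau_{\lambda}\cong\tau$, after choosing lattices compatibly. First I would recall that $\bfT_{\tau}$ is by definition the image of $\bfT$ in $\prod_{\lambda: \tau_{\lambda}\cong\tau}\Oo$, so it suffices to produce, for each such $\lambda$, a representation $\rho_{\lambda}: G_{\Sigma}\to\GL_2(\Oo)$ in the isomorphism class of $\tilde{\tau}_{\lambda}$ whose reduction mod $\varpi$ is \emph{equal} (not merely isomorphic) to a single fixed representative $\tau$ of the isomorphism class, and then to check that the product map $G_{\Sigma}\to\prod_{\lambda}\GL_2(\Oo)$ lands in $\GL_2(\bfT_{\tau})\subset\prod_{\lambda}M_2(\Oo)$ entry by entry; reducing mod $\fm_{\bfT_{\tau}}$ then visibly gives $\tau$.

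The key steps, in order: (1) Fix a representative $\tau=\bigl[\begin{smallmatrix}1&*\\&\chi\end{smallmatrix}\bigr]$ and, using Proposition \ref{genRibet}, for each $\lambda$ with $\tau_{\lambda}\cong\tau$ choose a $G_{\Sigma}$-stable lattice $L_{\lambda}$ so that $\tilde{\tau}_{\lambda}$ acting on $L_{\lambda}$ has non-semisimple reduction of the form $\bigl[\begin{smallmatrix}1&*\\&\chi\end{smallmatrix}\bigr]$; by Proposition \ref{SW2.13} this reduction is isomorphic to $\tau$, and since $\tau_1=1$ and $\tau_2=\chi$ are absolutely irreducible and non-isomorphic, the automorphisms of $\tau$ as a $G_{\Sigma}$-module are scalars together with (at most) the unipotent ones coming from $\Hom_{G_{\Sigma}}(\chi,1)=0$, so in fact $\Aut(\tau)=\bfF^{\times}$ and one can conjugate $L_{\lambda}$ by an element of $\GL_2(\Oo)$ reducing to the identity, after which the reduction is literally equal to the fixed $\tau$. (2) Assemble $\rho=(\rho_{\lambda})_{\lambda}: G_{\Sigma}\to\GL_2\bigl(\prod_{\lambda}\Oo\bigr)$. (3) Show each matrix entry of each $\rho(g)$ lies in the subring $\bfT_{\tau}$: the diagonal/antidiagonal entries are controlled by traces and by the Eisenstein structure — more precisely $\tr\rho_{\lambda}(\Frob_{\fq})=\lambda(T_{\fq})$ and these assemble to the image of $T_{\fq}$ in $\bfT_{\tau}$, and by Chebotarev plus the standard argument (as in Urban's lattice construction / \cite{BergerKlosin15}) the whole matrix algebra generated by the $\rho(g)$ is an $M_2$ over $\bfT_{\tau}$; one invokes that $\bfT_{\tau}$ is reduced and finite over $\Oo$ so the relevant residually-multiplicity-free pseudorepresentation theory (\cite{BellaicheChenevierbook}, \S1.4) applies to conclude the existence of a genuine $\GL_2(\bfT_{\tau})$-valued lift rather than just matching traces. (4) Reduce mod $\fm_{\bfT_{\tau}}$: the reduction of $\rho$ has the same trace as $\tau$ and is non-semisimple of the right shape, hence equals $\tau$ up to the already-normalized conjugation.

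The main obstacle is step (3): passing from a compatible family of $\GL_2(\Oo)$-valued representations to a single $\GL_2(\bfT_{\tau})$-valued representation is not automatic — a priori one only gets that the pseudorepresentation $T=\sum_{\lambda}\tr\tilde{\tau}_{\lambda}$ takes values in $\bfT_{\tau}$. The point is that because $\ov{\rho}$ is non-semisimple with absolutely irreducible non-isomorphic Jordan--Hölder factors, and $\bfT_{\tau}$ is a reduced local ring, the pseudorepresentation is residually multiplicity free and one can apply the generalized matrix algebra / reducibility machinery of \cite{BellaicheChenevierbook} (or directly Urban's Theorem 1.1 in \cite{Urban01}, exactly as invoked before Theorem \ref{prin}, with $A=\bfT_{\tau}$) to realize $T$ as the trace of an honest homomorphism $\rho_{\tau}:G_{\Sigma}\to\GL_2(\bfT_{\tau})$ whose reduction is non-split. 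One must check the hypotheses of that machinery hold here — in particular that the "$*$" in $\tau$ is nonzero, which holds because $\tau$ is assumed non-semisimple, and that the lattice in \eqref{lattice}-style arguments gives a full-rank $\bfT_{\tau}$-lattice; this is where the argument really has content, and it is essentially a repackaging of the lattice construction already used in the proof of Theorem \ref{prin}.
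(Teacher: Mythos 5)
Your proposal follows essentially the same route as the paper: form the product representation $\rho'_{\tau}$ landing in $\GL_2(\mathrm{Frac}(\bfT_{\tau}))$, apply Urban's lattice construction (Theorem 1.1 of \cite{Urban01}, as in Theorem 4.1 and Theorem 6.2 of \cite{BergerKlosin15}) to obtain a $G_{\Sigma}$-stable $\bfT_{\tau}$-lattice $\mL \cong \mT_{\tau}\oplus\bfT_{\tau}$, and then argue that $\mT_{\tau}$ is free of rank one, so that $\mL\cong\bfT_{\tau}^2$ and the Galois action is a genuine $\GL_2(\bfT_{\tau})$-valued representation lifting $\tau$. That is exactly what the paper does.

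The one place you gesture at but do not actually supply is the crux: \emph{why} $\dim_{\bfF}\mT_{\tau}/\fm_{\bfT_{\tau}}\mT_{\tau}=1$. You describe this as ``essentially a repackaging of the lattice construction already used in the proof of Theorem \ref{prin},'' but the argument there depends on a real hypothesis of that theorem, namely that the image of the injective map $\iota:\Hom(\mT/\fm_A\mT,\bfF)\to\Ext^1_{\bfF[G]}(\tau_1,\tau_2)$ is contained in a prescribed one-dimensional space $S_{12}$ — that is not automatic and must be established here. (Similarly, invoking \cite{BellaicheChenevierbook} \S1.4 only produces a generalized matrix algebra; it does not by itself give $M_2(\bfT_{\tau})$ without knowing the relevant Ext-module is free of rank one.) The input you need is precisely the observation buried in your step (1): by the construction of $\bfT_{\tau}$ and Proposition \ref{SW2.13}, every $\tilde{\tau}_{\lambda}$ with $\lambda$ factoring through $\bfT_{\tau}$ has its unique non-split reduction isomorphic to $\tau$, so pushing the Urban cocycle forward along any $\lambda$ and reducing mod $\varpi$ gives the extension class of $\tau$ itself, forcing the image of $\iota$ to lie in the single $\bfF$-line spanned by $[\tau]$, whence $\dim_\bfF\mT_{\tau}/\fm\mT_{\tau}=1$. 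With that connection made explicit your outline closes; your initial normalization of each $L_{\lambda}$ so that it literally equals a fixed $\tau$ (correctly using $\Aut_{G_{\Sigma}}(\tau)=\bfF^{\times}$) is harmless but not needed — the paper works with $\mL$ and the reducibility ideal $I_{\tau}$ directly.
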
 
\begin{proof}  Consider the representation $$\rho'_{\tau}: G_{\Sigma} \to \prod_{\lambda : \tau_{\lambda} \cong \tau} \GL_2(\Oo)\subset \GL_2({\rm Frac} (\bfT_{\tau}))$$ given by the representations $\tilde{\tau}_{\lambda}$. We now proceed as in the proof of Theorem 6.2 in \cite{BergerKlosin15} replacing $R_{\tau}^{\rm tr, 0}$ there with $\bfT_{\tau}$. We only give a brief outline here as the argument is essentially identical. Using Theorem 4.1 in \cite{BergerKlosin15} we deduce the existence of a Galois invariant lattice $\mL$ in the representation space ${\rm Frac}(\bfT_{\tau})^2$  of $\rho'_{\tau}$ and a $\bfT_{\tau}$-lattice $\mT_{\tau} \subset {\rm Frac}(\bfT_{\tau})$ which fits into the exact sequence \be \label{6.2} 0 \to \mT_{\tau}/I_{\tau}\mT_{\tau} \to \mL \otimes_{\bfT_{\tau}}\bfT_{\tau}/I_{\tau} \to \tilde{\chi}\otimes_{\Oo} \bfT_{\tau}/I_{\tau} \to 0,\ee where $I_{\tau}$ is the ideal of reducibility of the pseudo-representation $\tr \tau$. 

As in the proof of Theorem 6.2 in \cite{BergerKlosin15} one notes that $\mL \cong \mT_{\tau} \oplus \bfT_{\tau}$ as $\bfT_{\tau}$-modules and then shows that $\mT_{\tau} /I_{\tau}\mT_{\tau} \otimes_{\bfT_{\tau}}\bfF \cong \bfF$, so we get $\mT_{\tau} = \bfT_{\tau}$ as in the proof of Theorem \ref{prin} above. Thus  \eqref{6.2} gives rise to a representation $\rho_{\tau}$ as in the statement of the Lemma. 
\end{proof}
\begin{rem} We note that Lemma \ref{imageT} does not imply that there is a representation of $G_{\Sigma}$ into $\GL_2(\bfT)$. In the residually irreducible case this is in fact the case (cf.~Lemma 3.27 in \cite{DDT}). Also if one assumes that $\tau$ is unique (i.e., that there is only one isomorphism class of non-semisimple residual representations with semi-simplification $1 \oplus \chi$)
 this is also true and follows from the fact that in this case the universal deformation ring is generated by traces (cf.~Corollary 3.2 in \cite{SkinnerWiles97} and Proposition 7.13 in \cite{BergerKlosin13}). However, in general (when several different $\tau$s exist), this need no longer be the case. Lemma \ref{imageT} can be viewed as providing a substitute for the existence of  a representation into $\GL_2(\bfT)$  when one fixes a particular residual representation $\tau$. However, while $\bfT_{\tau}$ is a quotient of $\bfT$, in general there is no natural map $\bfT_{\tau} \to \bfT$.
\end{rem}

Using Lemma \ref{imageT} we can write  $I_{\tau}$ for the ideal of reducibility of  $\rho_{\tau}$. Let us now apply Theorem \ref{prin} to our situation with $A=\bfT_{\tau}$. Note that the cuspidality of $\bfT_{\tau}$ ensures that the assumption of absolute irreducibility of the generic components of $\rho_{\tau}$ is satisfied. 
\begin{lemma} \label{Eis=red} One has $J_{\tau} = I_{\tau}$. \end{lemma}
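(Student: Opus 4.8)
The plan is to prove the two inclusions $J_\tau \subseteq I_\tau$ and $I_\tau \subseteq J_\tau$ separately, working inside the ring $A = \bfT_\tau$ with its Galois representation $\rho_\tau : G_\Sigma \to \GL_2(\bfT_\tau)$ from Lemma \ref{imageT}. Recall that $I_\tau$ is by definition the smallest ideal of $\bfT_\tau$ modulo which $\tr \rho_\tau$ decomposes as a sum of two pseudo-representations lifting $\tr 1 = 1$ and $\tr \chi$ respectively, while $J_\tau$ is generated by the images of the elements $T_\fq - (1 + \tilde\psi(\fq)(N\fq)^{k-1})$, i.e.\ by $\tr \rho_\tau(\Frob_\fq) - (1 + \tilde\chi(\Frob_\fq))$ for $\fq \notin \Sigma$, using that $\tilde\chi = \tilde\psi\,\epsilon^{k-1}$ so that $\tilde\chi(\Frob_\fq) = \tilde\psi(\fq)(N\fq)^{k-1}$.

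For the inclusion $I_\tau \subseteq J_\tau$: modulo $J_\tau$ we have, for all $\fq \notin \Sigma$, that $\tr \rho_\tau(\Frob_\fq) \equiv 1 + \tilde\chi(\Frob_\fq)$ and (from the determinant condition built into the Hecke algebra, which forces $\det \rho_\tau = \tilde\chi$) that $\det \rho_\tau(\Frob_\fq) = \tilde\chi(\Frob_\fq) = 1 \cdot \tilde\chi(\Frob_\fq)$. Since the Frobenii $\Frob_\fq$ for $\fq \notin \Sigma$ are dense in $G_\Sigma$ by Chebotarev and $\tr\rho_\tau$ is continuous, it follows that modulo $J_\tau$ the pseudo-representation $\tr \rho_\tau$ equals $1 + \tr(\tilde\chi \bmod J_\tau)$, a sum of two one-dimensional pseudo-representations reducing to $1$ and $\tr\chi$ mod $\fm_{\bfT_\tau}$. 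By the minimality defining the ideal of reducibility, $I_\tau \subseteq J_\tau$.

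For the reverse inclusion $J_\tau \subseteq I_\tau$: modulo $I_\tau$, by definition $\tr \rho_\tau = T_1 + T_2$ with $T_i$ pseudo-representations of dimension $1$ lifting $\tr\tau_i$; a dimension-one pseudo-representation is just a character, so $T_1, T_2 : G_\Sigma \to (\bfT_\tau/I_\tau)^\times$ are genuine characters with $T_1 \equiv 1$ and $T_2 \equiv \chi$ modulo $\fm$. I would argue that $T_1$ must be the trivial character and $T_2 = \tilde\chi \bmod I_\tau$: indeed $T_1 T_2 = \det\rho_\tau = \tilde\chi$, and $T_1$ is an unramified-outside-$\Sigma$ character congruent to $1$, valued in a ring finite over $\Oo$; using $p \nmid \#\Cl_F$ together with the hypothesis that $N\fq \not\equiv 1 \pmod p$ for $\fq \in \Sigma$ (so there are no nontrivial such characters into a pro-$p$ group, as in the vanishing-of-infinitesimal-deformations argument cited in the proof of Theorem \ref{BK main}), one concludes $T_1 = 1$ and hence $T_2 = \tilde\chi \bmod I_\tau$. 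Then for every $\fq \notin \Sigma$, $\tr\rho_\tau(\Frob_\fq) \equiv 1 + \tilde\chi(\Frob_\fq) \pmod{I_\tau}$, i.e.\ $T_\fq - (1 + \tilde\psi(\fq)(N\fq)^{k-1}) \in I_\tau$; since these elements generate $J_\tau$, we get $J_\tau \subseteq I_\tau$.

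The main obstacle I anticipate is the rigidity step in the second inclusion: showing that the reducible decomposition mod $I_\tau$ must have $T_1$ trivial rather than some nontrivial character congruent to $1$. This is where the arithmetic hypotheses ($p$ unramified in $F$, $p \nmid \#\Cl_F$, $N\fq \not\equiv 1 \bmod p$) are essential, and one should cite the same computation used for Theorem \ref{BK main} (modeled on \cite{BergerKlosin13}, Proposition 9.5) guaranteeing there are no nontrivial infinitesimal — and more generally no nontrivial finite-order pro-$p$ — unramified-outside-$\Sigma$ deformations of the trivial character. A secondary point to handle carefully is that $\det\rho_\tau = \tilde\chi$ exactly (not just up to the relevant twist): this follows because each $\tilde\tau_\lambda$ occurring in $\rho'_\tau$ has determinant $\tilde\psi\,\epsilon^{k-1} = \tilde\chi$ (the nebentypus and weight being fixed in the definition of $\bfT$), so the determinant of $\rho_\tau$, being the common value on a dense set of Frobenii, equals $\tilde\chi$ on all of $G_\Sigma$. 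Once these points are in place the lemma follows immediately.
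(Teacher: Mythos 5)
Your outline follows the expected two‐inclusion strategy and the easy direction $I_\tau\subseteq J_\tau$ is correct, but the rigidity step you use to conclude $T_1=1$ in the reverse direction has a genuine gap. You assert that $p\nmid\#\Cl_F$ together with $N\fq\not\equiv 1\pmod p$ for $\fq\in\Sigma$ implies that there are no nontrivial characters $G_\Sigma\to 1+\fm_{\bfT_\tau/I_\tau}$ lifting the trivial character. That is false as stated: even over $\bfQ$, the wild part of the cyclotomic tower at $p$ furnishes a $\bfZ_p$'s worth of such characters (e.g.\ $g\mapsto(1+a)^{\log_p\epsilon(g)/\log_p(1+p)}$ for $a\in\fm$), and they are unaffected by the class-number and tame hypotheses, which only control the maximal abelian pro-$p$ extension unramified at $p$. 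Consequently neither the determinant identity $T_1T_2=\tilde\chi$ nor the arithmetic hypotheses alone pin down $T_1$.

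What actually closes this gap — and is where the citation to \cite{BergerKlosin13} Proposition 9.5 is doing real work that your gloss elides — is the Fontaine–Laffaille condition at $\fp\mid p$. One must first argue that $\rho_\tau\bmod I_\tau$ (an Artinian quotient, so subject to the FL constraint built into the deformation condition) admits a filtration whose graded pieces realize the characters $T_1$ and $T_2$, and that the FL weights $\{0,k-1\}$ force $T_1|_{D_\fp}$ to be FL of weight $0$, hence unramified at $\fp$. This last step is not automatic from the pseudo-representation decomposition $\tr\rho_\tau = T_1 + T_2$ and needs the GMA/lattice structure (Proposition 1.7.4 of \cite{BellaicheChenevierbook} or the Urban construction already present in the paper). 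Only after $T_1$ is shown unramified at $p$ do $p\nmid\#\Cl_F$ and $N\fq\not\equiv 1\pmod p$ force $T_1=1$, whence $T_2=\tilde\chi$ and $J_\tau\subseteq I_\tau$. Your instinct to reach for the rigidity used in Theorem \ref{BK main} is right, but the mechanism is the local FL/weight constraint, not merely ``no characters into a pro-$p$ group.'' (The paper itself simply defers to \cite{BergerKlosin15} Lemma 2.9, where this local input is supplied.)
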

\begin{proof} This can be proved like Lemma 2.9 in \cite{BergerKlosin15}. \end{proof}

\begin{prop} \label{our sit} If  $\dim_{\bfF} H^1_{\Sigma}(F, \chi)=1$ then $I_{\tau}$ is a principal ideal. \end{prop}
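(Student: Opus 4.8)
The plan is to apply Theorem \ref{prin} with $A = \bfT_\tau$, so it suffices to verify that the map $\iota$ lands inside the chosen one-dimensional subspace $S_{12} \subset \Ext^1_{\bfF[G_\Sigma]}(\tau_1, \tau_2) = \Ext^1_{\bfF[G_\Sigma]}(1, \chi) = H^1_\Sigma(F, \chi)$. But by hypothesis $\dim_{\bfF} H^1_\Sigma(F, \chi) = 1$, so this entire $\Ext$-group (or rather, the piece cut out by the crystalline/Fontaine--Laffaille local conditions) is already one-dimensional, and any nonzero subspace equals the whole thing. So the containment is automatic once we know the image of $\iota$ actually consists of classes satisfying the correct local conditions at $\fp \mid p$.

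First I would set up the data: by Lemma \ref{imageT} we have $\rho_\tau : G_\Sigma \to \GL_2(\bfT_\tau)$ reducing to $\tau$, and $\bfT_\tau$ is finite over $\Oo$ and reduced (being a subring of a product of copies of $\Oo$), so Urban's construction applies and produces the lattice $\mT$ in $\mF = \Frac(\bfT_\tau)$, the cocycle $c \in H^1(G_\Sigma, \Hom(\tau_1,\tau_2) \otimes \mT/\fm\mT)$, and the injection $\iota : \Hom(\mT/\fm\mT, \bfF) \hookrightarrow \Ext^1_{\bfF[G_\Sigma]}(1, \chi)$. The only genuine point to check is that $\image \iota \subseteq H^1_\Sigma(F, \chi)$, i.e. that the extension classes produced are crystalline at all primes above $p$ and (automatically, since there are no conditions there) unramified-allowed at the other primes of $\Sigma$. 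Here one invokes that $\tilde\tau_\lambda$, hence $\rho_\tau$, is Fontaine--Laffaille at $\fp \mid p$ (which holds because $2 \le k < p$), so the sub-quotients of the lattice $\mL \otimes \bfF$ appearing in \eqref{lattice} — in particular the extension class of $\chi$ by $1$ read off from the cocycle — lie in the essential image of the Fontaine--Laffaille functor and thus define classes in $H^1_f(F_\fp, \chi)$; this is the same argument as in the proof of Lemma 4.5 / Theorem 6.2 of \cite{BergerKlosin15}. Once $\image \iota \subseteq H^1_\Sigma(F, \chi)$ is established, one-dimensionality of the target forces $\image \iota$ to be contained in (indeed equal to, if nonzero) the line $S_{12}$, so Theorem \ref{prin} yields that $I_\tau$ is principal.

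The step I expect to be the main obstacle — or at least the one requiring the most care — is confirming that the classes in $\image \iota$ genuinely satisfy the Fontaine--Laffaille local condition at $p$, rather than merely being classes in the unrestricted $H^1(G_\Sigma, \chi)$; this is where the hypothesis $2 \le k \le p-1$ and the Fontaine--Laffaille property of $\rho_\tau$ must be used, exactly as in the cited lemmas of \cite{BergerKlosin15}. The remaining bookkeeping — that $\bfT_\tau$ is reduced and finite over $\Oo$, that the generic components $\tr(\rho_\tau)_i$ are absolutely irreducible (cuspidality), and that $\tau_1 = 1 \not\cong \chi = \tau_2$ (automatic since $\chi$ is nontrivial, as $H^1_\Sigma(F,\chi^{-1})$ contains a line so $\tau$ is a nonsplit extension) — is routine and has already been arranged in the preceding results. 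Thus the proof reduces to citing Theorem \ref{prin} after the local-condition check, and I would write it in essentially that form.
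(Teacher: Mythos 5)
Your proof is correct and follows essentially the same route as the paper's: set $A = \bfT_\tau$, $G = G_\Sigma$, $S_{12} = H^1_\Sigma(F,\chi)$ (the paper's proof has a typo writing $S_{21}$ here), observe that one-dimensionality of $H^1_\Sigma(F,\chi)$ makes the containment $\image\iota \subseteq S_{12}$ equivalent to $\image\iota \subseteq H^1_\Sigma(F,\chi)$, check that containment via the Fontaine--Laffaille property (which is precisely Lemma \ref{image1}, proved by citing Lemma 4.5 of \cite{BergerKlosin15}), and conclude by Theorem \ref{prin}. You also correctly flag that the local condition at $p$ is the substantive step and that the remaining hypotheses of Theorem \ref{prin} (reducedness and finiteness of $\bfT_\tau$, absolute irreducibility of generic components via cuspidality, $\rho_\tau$ being an honest representation so Proposition \ref{1.7.4} applies) have already been arranged.
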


\begin{proof} Because $\tau$ is an actual representation, Proposition \ref{1.7.4}  gives us that $$\dim_{\bfF} \Ext^1_{(\bfT_{\tau}[G_{\Sigma}]/\ker \rho_\tau)/\fm_{\tau}(\bfT_{\tau}[G_{\Sigma}]/\ker \rho_\tau)} (\chi, 1)=1.$$ 
We set $A=\bfT_{\tau}$, $G=G_{\Sigma}$ and $S_{21}=H^1_{\Sigma}(\bfQ, \chi)$. 
The claim follows from Theorem \ref{prin} and Lemma \ref{image1} below. \end{proof}
\begin{lemma}\label{image1} The image of $\iota: \Hom(\mT/\fm_{\bfT}\mT, \bfF) \to H^1(G_{\Sigma}, \chi)$ is contained in $H^1_{\Sigma}(F, \chi)$. \end{lemma}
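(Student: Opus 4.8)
The plan is to trace through Urban's lattice construction and verify that the cocycle $c$ (hence every class in the image of $\iota$) is unramified outside $\Sigma$ automatically — the point is only to check the crystalline condition at the primes $\fp \mid p$. Recall that $\mT = \mT_{\tau}$ is a $\bfT$-lattice inside $\mathrm{Frac}(\bfT)$ and the cocycle $c \in H^1(G_{\Sigma}, \Hom(1,\chi) \otimes \mT/\fm_{\bfT}\mT)$ arises from the short exact sequence \eqref{lattice} (equivalently \eqref{6.2}), so $\iota(f) = (1 \otimes f)(c)$ is the class of the extension obtained by pushing out \eqref{lattice} along $f \colon \mT/\fm_{\bfT}\mT \to \bfF$. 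Thus it suffices to show that the reduction $\mL \otimes_{\bfT} \bfF$ — viewed as a $G_{\Sigma}$-stable lattice modulo $\varpi$ in the representation $\rho_{\tau}$ — is Fontaine--Laffaille at each $\fp \mid p$, and that Fontaine--Laffaille extensions of $\bfF(\chi)$ by $\bfF$ land in $H^1_f(F_{\fp}, \cdot)$; then so does any subquotient/pushout, in particular $\iota(f)$.

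First I would recall that the representations $\tilde{\tau}_{\lambda}$ for $\lambda$ with $\tau_{\lambda} \cong \tau$ are Fontaine--Laffaille at every $\fp \mid p$: this was noted in the text right before Definition \ref{defmod}, using $2 \le k < p$. Hence the product representation $\rho'_{\tau} \colon G_{\Sigma} \to \prod_{\lambda} \GL_2(\Oo)$ is Fontaine--Laffaille, and since the Fontaine--Laffaille property is stable under passing to subrepresentations, quotients, and finite products of the relevant local $\Oo$-algebras (by functoriality and exactness of the Fontaine--Laffaille functor $\mathbf{G}$), the lattice $\mL \subset \mathrm{Frac}(\bfT_{\tau})^2$ carrying $\rho_{\tau} \colon G_{\Sigma} \to \GL_2(\bfT_{\tau})$ is Fontaine--Laffaille; restricting to an Artinian quotient, $\rho_{\tau} \bmod \fm_{\bfT_{\tau}}$, which is $\tau$, is in the essential image of $\mathbf{G}$. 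By construction of $H^1_f$ for finite modules via Fontaine--Laffaille theory (Section \ref{s2.1}, following \cite{BlochKato90} Lemma 4.4), the extension class of $0 \to \bfF \to \tau \to \bfF(\chi) \to 0$ lies in $H^1_f(F_{\fp}, \bfF(\chi^{-1}))$ — equivalently, reading the opposite direction, the relevant extension represented by $c$ after pushout is crystalline at $\fp$.

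The one subtlety — and this is the step I expect to need the most care — is that $c$ a priori lives in cohomology with coefficients in $\mT/\fm_{\bfT}\mT$, which is the $\bfF$-dual of the space $S_{21}$-type extension data, so I must match up the "direction": Urban's construction produces, from a reducible representation with $\tau_1 = 1$ on top and $\tau_2 = \chi$ on the bottom (as in \eqref{6.2}), a cocycle valued in $\Hom(\tau_1, \tau_2)\otimes \mT/\fm_{\bfT}\mT = \Hom(1,\chi)\otimes \mT/\fm_{\bfT}\mT$, and $\iota(f)$ is the corresponding class in $H^1(G_{\Sigma}, \chi)$ obtained after applying $f$. So the extension whose class is $\iota(f)$ is a genuine subquotient of (a Tate twist of) the Fontaine--Laffaille module $\mathbf{G}^{-1}(\rho_{\tau} \bmod \varpi^m)$ for suitable $m$, and since $\mathcal{MF}_{\Oo}$ is closed under subobjects and quotients the extension is itself in the image of $\mathbf{G}$, hence crystalline. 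Concretely I would argue: pick $f \colon \mT/\fm_{\bfT}\mT \to \bfF$; then $\iota(f)$ is the class of the pushforward of \eqref{lattice} along $f$, which is an extension $0 \to \bfF(\chi) \to ? \to \bfF \to 0$ sitting inside $(\mL\otimes\bfF)$ pushed out — a two-dimensional $\bfF[G_{\Sigma}]$-module that is a subquotient of the reduction of the Fontaine--Laffaille representation $\rho_{\tau}$. By exactness of $\mathbf{G}$ this subquotient is Fontaine--Laffaille at each $\fp\mid p$, so its class lies in $H^1_f(F_{\fp}, \chi)$ for all $\fp \mid p$. Combined with the fact that $c$ is unramified outside $\Sigma$ (it comes from $G_{\Sigma}$-modules), this gives $\iota(f) \in H^1_{\Sigma}(F,\chi)$, completing the proof. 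The analogous statement in \cite{BergerKlosin15} (cf.\ the proof that the modular extensions produced there are crystalline) can be cited to streamline the exactness bookkeeping.
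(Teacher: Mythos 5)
Your argument is correct in substance, and since the paper's own proof simply cites Lemma 4.5 of \cite{BergerKlosin15}, you are effectively supplying the proof that the paper delegates. The essential chain is sound: for $f \neq 0$, $\iota(f)$ is the class of the pushout of \eqref{lattice} along $f$ and hence an $\Oo[G_{\Sigma}]$-quotient of $\mL\otimes_{\bfT_\tau}\bfF$; the lattice $\mL$ is an $\Oo$-lattice inside $\textup{Frac}(\bfT_\tau)^2$, which carries the Fontaine--Laffaille representation $\rho'_\tau=\prod_\lambda\tilde\tau_\lambda$, so $\mL$ and all of its $\Oo[D_{\fp}]$-subquotients (in particular $\mL\otimes\bfF$ and the further quotient giving $\iota(f)$) lie in the essential image of $\mathbf{G}$ for each $\fp\mid p$; and by the paper's definition of $H^1_f(F_\fp,\cdot)$ for finite modules this is precisely membership in $H^1_f(F_\fp,\chi)$. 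Since $H^1_\Sigma$ imposes no condition at $\fq\in\Sigma$, $\fq\nmid p$, and the cocycle already lives on $G_\Sigma$, the conclusion follows.

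Two small inaccuracies do not affect the result but are worth flagging. First, \eqref{lattice} and \eqref{6.2} are not ``equivalent'': \eqref{6.2} (in Lemma \ref{imageT}) has the trivial module as sub and $\tilde\chi$ as quotient, the same direction as $\tau$, whereas \eqref{lattice} (in Theorem \ref{prin}, the one actually relevant here) has $\chi$ as sub and $1$ as quotient, the opposite direction --- which is precisely why $\iota$ targets $H^1(G_\Sigma,\chi)$ rather than $H^1(G_\Sigma,\chi^{-1})$. You do get the direction right in the body of the argument, so this is confined to the parenthetical. Second, no Tate twist is needed at the final step: the pushout is an $\Oo[G_\Sigma]$-module quotient of $\mL/\fm_{\bfT_\tau}\mL$ on the nose, and it is also cleaner to say the two-dimensional extension is a subquotient of $\mL$ (as an $\Oo$-lattice in the Fontaine--Laffaille representation $\textup{Frac}(\bfT_\tau)^2$) rather than of ``the reduction of $\rho_\tau$,'' since $\rho_\tau$ is defined with respect to a different lattice in the proof of Lemma \ref{imageT}.
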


\begin{proof} This follows from Lemma 4.5 in \cite{BergerKlosin15}, except that we do not need the assumptions 2.5 and 4.2 there, as we do not claim surjectivity of $\iota$ here.  
\end{proof}
Combined with Proposition \ref{main1} we obtain the following result. 

\begin{thm} \label{main3} Suppose that $\# H^1_{\Sigma}(F, \tilde{\chi}^{-1}\otimes E/\Oo)\leq \#\bfT/J$. If  $\dim_{\bfF} H^1_{\Sigma}(F, \chi)=1$ and the Eisenstein ideal $J$ is not principal, then $\#\fT_{\rm mod}> \dim_{\bfF}H^1_{\Sigma}(F, \chi^{-1})$. \end{thm}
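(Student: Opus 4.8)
The plan is to combine the two conditional implications proved earlier in the paper: Theorem~\ref{main3}'s hypotheses are designed precisely so that the hypotheses of Proposition~\ref{main1} are all met. Recall that Proposition~\ref{main1} asserts that, assuming $\# H^1_{\Sigma}(F, \tilde{\chi}^{-1}\otimes E/\Oo) \leq \#\bfT/J$, if $J_{\tau}$ is principal for every $\tau \in \fT_{\rm mod}$ but $J$ itself is not principal, then $\#\fT_{\rm mod} > \dim_{\bfF} H^1_{\Sigma}(F, \chi^{-1})$. So the only gap between the hypotheses we are handed and those of Proposition~\ref{main1} is the principality of each $J_{\tau}$.

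First I would dispose of the trivial case: if $\fT_{\rm mod} = \emptyset$, then there are no $\tau$ to worry about and Proposition~\ref{main1} applies vacuously on that point (though in fact $\fT_{\rm mod}\neq\emptyset$ already follows from Theorem~\ref{BK main} and $H^1_{\Sigma}(F,\chi^{-1})\neq 0$, since non-principality of $J$ forces this group to be nonzero). For each $\tau \in \fT_{\rm mod}$, Lemma~\ref{imageT} furnishes a representation $\rho_{\tau}: G_{\Sigma} \to \GL_2(\bfT_{\tau})$ reducing to $\tau$ mod $\fm_{\bfT_{\tau}}$, so the ideal of reducibility $I_{\tau}$ of $\rho_{\tau}$ is defined. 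By Remark~\ref{finiteness1} the ring $A = \bfT_{\tau}$ is finite over $\Oo$ (and reduced, being a subring of a product of copies of $\Oo$), and its cuspidality guarantees the generic components of $\rho_{\tau}$ are absolutely irreducible, so the hypotheses needed to invoke the material of Section~\ref{Ideal of reducibility and its principality} are in place. Since we are assuming $\dim_{\bfF} H^1_{\Sigma}(F,\chi) = 1$, Proposition~\ref{our sit} gives that $I_{\tau}$ is principal, and then Lemma~\ref{Eis=red} identifies $J_{\tau} = I_{\tau}$, so $J_{\tau}$ is principal for every $\tau \in \fT_{\rm mod}$.

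Having verified all three hypotheses of Proposition~\ref{main1} --- the inequality $\# H^1_{\Sigma}(F, \tilde{\chi}^{-1}\otimes E/\Oo) \leq \#\bfT/J$ (assumed), the principality of every $J_{\tau}$ (just established), and the non-principality of $J$ (assumed) --- the conclusion $\#\fT_{\rm mod} > \dim_{\bfF} H^1_{\Sigma}(F, \chi^{-1})$ follows directly. The substantive content here is entirely in the lemmas and propositions already assembled; the "main obstacle" is really just making sure one is entitled to apply Proposition~\ref{our sit} with $A = \bfT_{\tau}$, i.e.\ checking that $\bfT_{\tau}$ is reduced and module-finite over $\Oo$ and that the absolute-irreducibility hypothesis on the generic fibers of $\rho_{\tau}$ holds — all of which is recorded in Remark~\ref{finiteness1} and the discussion preceding Lemma~\ref{Eis=red}. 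Thus the proof is a short assembly argument.
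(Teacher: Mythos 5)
Your proposal is correct and is essentially the paper's own (unstated) argument: the theorem is stated immediately after the sentence "Combined with Proposition~\ref{main1} we obtain the following result," and the intended proof is exactly the chain Lemma~\ref{imageT} $\to$ Proposition~\ref{our sit} $\to$ Lemma~\ref{Eis=red} to get principality of each $J_{\tau}$, followed by an application of Proposition~\ref{main1}. Your additional care in checking the hypotheses of Proposition~\ref{our sit} for $A=\bfT_{\tau}$ and your handling of the trivial case $\fT_{\rm mod}=\emptyset$ are consistent with what the paper leaves implicit.
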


We end this section by stating a cohomological criterion guaranteeing the principality of the Eisenstein ideal.
 
\begin{cor} \label{main4} Suppose that $\# H^1_{\Sigma}(F, \tilde{\chi}^{-1}\otimes E/\Oo)\leq \#\bfT/J$. Suppose furthermore that $\dim_{\bfF} H^1_{\Sigma}(F, \chi)=\dim_{\bfF} H^1_{\Sigma}(F, \chi^{-1})=1$. Then $J$ is principal. \end{cor}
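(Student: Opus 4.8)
The plan is to combine Theorem \ref{main3} with its contrapositive, together with the auxiliary principality result of Proposition \ref{our sit} (equivalently Theorem \ref{prin}). Concretely, Corollary \ref{main4} follows by a short argument by contradiction: suppose that $J$ is \emph{not} principal. Then all the hypotheses of Theorem \ref{main3} are in force --- namely $\# H^1_{\Sigma}(F, \tilde\chi^{-1}\otimes E/\Oo)\le \#\bfT/J$ is assumed, $\dim_{\bfF}H^1_{\Sigma}(F,\chi)=1$ is assumed, and $J$ is assumed non-principal --- so Theorem \ref{main3} yields $\#\fT_{\rm mod}>\dim_{\bfF}H^1_{\Sigma}(F,\chi^{-1})$. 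On the other hand $\dim_{\bfF}H^1_{\Sigma}(F,\chi^{-1})=1$ by hypothesis, so we would have $\#\fT_{\rm mod}\ge 2$, meaning there are at least two distinct isomorphism classes $\tau$ of non-semisimple residual representations of the form $\bmat 1&*\\&\chi\emat$ that are modular. But each such $\tau$ corresponds to a line in $H^1_{\Sigma}(F,\chi^{-1})$ (by Proposition \ref{genRibet} and the discussion following Remark \ref{finiteness1}), and a one-dimensional $\bfF$-vector space contains exactly one line; hence $\#\fT_{\rm mod}\le 1$. This contradicts $\#\fT_{\rm mod}\ge 2$, and we conclude that $J$ is principal.

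In writing this up I would present it as: assume for contradiction $J$ is not principal; invoke Theorem \ref{main3} (all of whose hypotheses hold) to get $\#\fT_{\rm mod}>1$; observe that $\fT_{\rm mod}$ injects into the set of lines of the one-dimensional space $H^1_{\Sigma}(F,\chi^{-1})$, so $\#\fT_{\rm mod}\le 1$; contradiction. It is worth remarking, for the reader, that the hypothesis $\dim_{\bfF}H^1_{\Sigma}(F,\chi)=1$ is doing real work here --- via Proposition \ref{our sit} it guarantees that $J_\tau = I_\tau$ is principal for every $\tau\in\fT_{\rm mod}$, which is exactly the input needed (through Proposition \ref{main1}, hence Theorem \ref{main3}) to force principality of $J$ from the counting. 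Without it, non-principality of the individual $J_\tau$ could coexist with non-principality of $J$ and no contradiction would arise.

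There is essentially no new mathematical obstacle in this corollary: it is a formal consequence of Theorem \ref{main3} combined with the trivial observation that a line is unique in a one-dimensional space. The only point requiring a moment's care is the identification of $\fT_{\rm mod}$ with a subset of the set of lines in $H^1_{\Sigma}(F,\chi^{-1})$ --- but this is precisely the content already recorded just before Remark \ref{finiteness1} and used in the proof of Proposition \ref{main1}, namely that isomorphism classes of Fontaine-Laffaille residual representations $\tau=\bmat 1&*\\&\chi\emat$ are in one-to-one correspondence with lines in $H^1_{\Sigma}(F,\chi^{-1})$, and modularity cuts out the subset $\fT_{\rm mod}$. So the proof is genuinely two lines long, and the "hard part" is purely expository: making clear that Corollary \ref{main4} is simply Theorem \ref{main3} read in the contrapositive under the extra normalization $\dim_{\bfF}H^1_{\Sigma}(F,\chi^{-1})=1$.
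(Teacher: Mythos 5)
Your argument is correct and is essentially the paper's own proof, merely phrased as a proof by contradiction rather than as a direct appeal to the contrapositive of Theorem \ref{main3}. The only (inessential) difference is that you get by with $\#\fT_{\rm mod}\le 1$ from the injection into lines, whereas the paper also cites Theorem \ref{BK main} to pin down the exact value $\#\fT_{\rm mod}=1$.
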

\begin{proof} In this case there is only one line in $H^1_{\Sigma}(F, \chi^{-1})$ which is modular by Theorem \ref{BK main}, i.e., we must have $\#\fT_{\rm mod}=1$. The claim now follows directly from Theorem \ref{main3}.  
\end{proof}

\section{$F=\bfQ$} \label{FQ}
In this section we take $F=\bfQ$. 
As in section \ref{The rings Ttau} we set $\tau_1=1$ and $\tau_2=\chi$ where $\chi$ is a character ramified at $p$. By class field theory we can write $\chi = \omega^{k-1} \psi$ for some $k$ with $2 \leq k \leq p-1$ and a character $\psi$ unramified at $p$. We write $\tilde \psi$ for the Teichm\"uller lift of $\psi$ and $\tilde \chi=\tilde \psi \epsilon^{k-1}$.  The assumption that $N \fq \not\equiv 1$ (mod $p$) for all $\fq \in \Sigma$ is unnecessary for any of the results in this section.

\subsection{Proving $\# H^1_{\Sigma}(\bfQ, \tilde{\chi}^{-1}\otimes E/\Oo) \leq  \#\bfT/J$}

For the convenience of the reader let us recall our setup. We denote by $\bfT'$ the Hecke algebra acting on the space of cusp forms $S_k(\Gamma_0(\fN))$ (as before $\fN\in \bfZ_+$ is only divisible by primes in $\Sigma - \{p\}$), i.e., the $\Oo$-subalgebra of $\End_{\bfC}(S_k(\Gamma_0(\fN)))$ generated by $T_{\ell}$ for all $\ell \nmid \fN p$.
Set $J'$ to be the ideal of $\bfT'$ generated by the operators $T_{\ell} - (1+\tilde{\psi}(\ell) \ell^{k-1})$ for all $\ell \not\in \Sigma$.  Let $\fm$ be the maximal ideal of $\bfT'$ containing $J'$ and write $\bfT$ for the completion of $\bfT'$ at $\fm$. Set $J$ to be the image of $J'$ in $\bfT$.

Put $$\eta(\tilde{\psi}, k):= B_k(\tilde{\psi}) \cdot \prod_{\ell \in \Sigma - \{p\}} (1- \tilde{\psi}(\ell) \ell^k),$$ where $B_k(\tilde{\psi})$ is the $k$th Bernoulli number of $\tilde{\psi}$.  
Here we treat $\tilde{\psi}$ as a Dirichlet character of $\bfZ/\fM \bfZ$ rather than of $\bfZ/\fN\bfZ$, where $\fM$ is the largest factor of $\fN$ only divisible by primes dividing the conductor of $\tilde{\psi}$ (in other words we do not set $\tilde{\psi}(\ell)=0$ if $\ell \nmid {\rm cond}(\tilde{\psi})$).

\begin{rem} It is expected that  $\#\bfT/J \geq \#\Oo/\eta(\tilde{\psi}, k)$ as long as $k>2$ or $k=2$ but $\psi \neq 1$. 
The case $k=2$ and $\psi=1$ is slightly different. For $\Sigma=\{p, \ell\}$ with $\ell$ a prime different from $p$ Mazur \cite{Mazur78}  Proposition II.9.7 proved $$\val_p(\#\bfT/J)= [\Oo: \bfZ_p] \val_p({\rm num}\left( \frac{\ell-1}{12}\right)).$$ This corresponds to $\eta(1 \pmod{\ell}, k)$ where we - different to our convention above - take $\tilde \psi=1$ as a Dirichlet character modulo $\ell$, i.e. put $\tilde{\psi}(\ell)=0$.
 In the proof of Proposition \ref{T/J} below the case $k=2$, $\psi=1$ is excluded due to the different form of the constant term of the Eisenstein series. See also \cite{Ohta14} and \cite{Yoo16} who treat a related Hecke algebra when $k=2$, $\psi=1$ and the level is composite. 
\end{rem}
We now prove that  $\#\bfT/J \geq \#\Oo/\eta(\tilde{\psi}, k)$ under some conditions.

\begin{prop} \label{T/J}
Let $k \geq 2$. If $k=2$ assume that $\psi \neq1$. Let $N={\rm cond}(\tilde{\psi})$, $\Sigma=\{p, \ell, q \mid N\}$ for some prime $\ell \nmid Np$.   Then  there exists $m>0$ such that  $\#\bfT/J \geq \#\Oo/\eta(\tilde{\psi}, k)$ for $\fN=N\ell^m$.
\end{prop}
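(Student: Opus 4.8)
The plan is to construct an explicit Eisenstein series whose constant term is (essentially) $\eta(\tilde\psi,k)$, and then invoke the standard "congruence between cusp forms and Eisenstein series" argument — in the style of Mazur, Skinner--Wiles (Theorem 5.1 of \cite{SkinnerWiles97}), and Ribet — to deduce that $\#\bfT/J$ is bounded below by $\#\Oo/\eta(\tilde\psi,k)$. First I would recall the weight-$k$ Eisenstein series $E_{k,\tilde\psi}$ of level $N=\cond(\tilde\psi)$ with $T_\ell$-eigenvalue $1+\tilde\psi(\ell)\ell^{k-1}$ for $\ell\nmid Np$, normalized so that its constant term at the cusp $\infty$ is a multiple of $B_k(\tilde\psi)/k$ (with the other Fourier coefficients being $\Oo$-integral and the form itself $p$-integral after clearing denominators). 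To move from level $N$ to level $\fN = N\ell^m$, I would apply the usual stabilization/oldform operators at the primes dividing $\fN/N$: the factor $\prod_{\ell'\in\Sigma-\{p\}}(1-\tilde\psi(\ell')\ell'^k)$ arising in $\eta(\tilde\psi,k)$ is exactly the product of Euler-factor corrections $(1-\tilde\psi(\ell')\ell'^{k}) $ one picks up when passing to the $\ell'$-stabilized Eisenstein series (here the weight-$k$ power $\ell'^k$ rather than $\ell'^{k-1}$ appears because one records the constant term, i.e.\ the value of the relevant $L$-function, not the Hecke eigenvalue). Choosing $m$ large enough ensures that the resulting Eisenstein series at level $N\ell^m$ has constant term of $p$-adic valuation exactly $\val_p(\eta(\tilde\psi,k))$ and that it is genuinely new/ordinary enough at $\ell$ for the argument; this is the role of the unspecified $m>0$ in the statement.

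Next I would run the congruence argument proper. The key point is that if $\#\Oo/\eta(\tilde\psi,k) = \#\Oo/\varpi^r$, then one can produce a cusp form $g\in S_k(\Gamma_0(\fN),\tilde\psi)$ congruent to $E_{k,\tilde\psi}$ modulo $\varpi^r$ in $q$-expansion: concretely, $E_{k,\tilde\psi}$ has nonzero constant term of valuation $r$, so subtracting off an appropriate multiple of another Eisenstein series (or using that the space of Eisenstein series is spanned by forms with controlled constant terms) one gets a form all of whose coefficients away from the constant term agree with a cusp form modulo $\varpi^r$; integrality of the Fourier coefficients of $E_{k,\tilde\psi}$ (von Staudt--Clausen / Carlitz-type bounds, plus $2\le k\le p-1$ so no denominators from $k$) is what makes this work. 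Feeding $g$ into the Hecke algebra: its system of eigenvalues is congruent mod $\varpi^r$ to $\{1+\tilde\psi(\ell)\ell^{k-1}\}$, so it contributes to $\bfT$ localized at $\fm$, and the induced map $\bfT/J \to \Oo/\varpi^r$ is surjective (it sends $T_\ell - (1+\tilde\psi(\ell)\ell^{k-1})$ into $\varpi^r\Oo$ and hits a unit somewhere), giving $\#\bfT/J \geq \#\Oo/\varpi^r = \#\Oo/\eta(\tilde\psi,k)$. One must be slightly careful that the maximal ideal $\fm$ chosen in the setup is the one through which this congruence factors; since $J\subset\fm$ by construction and the congruence forces the eigensystem of $g$ into $\fm$, this is automatic.

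I expect the main obstacle to be the bookkeeping at the auxiliary prime $\ell$ (and the primes dividing $N$): getting the constant-term computation at level $N\ell^m$ to come out to exactly $\eta(\tilde\psi,k)$ — with the correct Euler factors $(1-\tilde\psi(\ell')\ell'^k)$ and no spurious extra factors — requires carefully tracking how the Eisenstein series and its constant term transform under the degeneracy maps and $U_{\ell'}$/$T_{\ell'}$ operators, and choosing $m$ so that the $\ell$-adic part behaves. This is precisely the step where the case $k=2$, $\psi=1$ breaks down (the Eisenstein series $E_2$ is not holomorphic and its constant term has the anomalous shape noted in the preceding remark, forcing Mazur's separate treatment), which is why that case is excluded by hypothesis. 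The case $k=2$ with $\psi\neq 1$ is fine because then $E_{2,\tilde\psi}$ is already a genuine holomorphic modular form. The rest — integrality of Bernoulli numbers in the allowed range of $k$, surjectivity of $\bfT/J\to\Oo/\varpi^r$, and identifying the relevant local Hecke algebra — is standard and I would cite \cite{SkinnerWiles97}, \cite{Mazur78}, and \cite{BergerKlosin15} rather than reproving it.
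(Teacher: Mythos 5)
Your overall strategy---construct a cusp form congruent to the Eisenstein series modulo $\eta(\tilde\psi,k)$ and read off a surjection $\bfT/J \twoheadrightarrow \Oo/\eta(\tilde\psi,k)$---matches the paper's, and your explanation for excluding $k=2$, $\psi=1$ is correct. However, there is a genuine gap in how you propose to produce the cusp form, and a misidentification of what the parameter $m$ is for.

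You write that after $\ell$-stabilizing $E_{k,\tilde\psi}$ one can ``subtract off an appropriate multiple of another Eisenstein series'' to land in the cuspidal subspace. This glosses over the central difficulty: the $\ell$-stabilized Eisenstein series $F_m(z)=E_k(\tilde\psi)(\ell^{m-1}z)-\tilde\psi(\ell)\ell^k E_k(\tilde\psi)(\ell^m z)$ has nonvanishing constant terms at \emph{all} cusps $[u:v]$ with $N\ell^m\mid v$, not just at $\infty$, and these terms vary with $u$ as $\tilde\psi(u)^{-1}$. To kill them simultaneously and still preserve $\Oo$-integrality of the $q$-expansion, one must produce an auxiliary form $G\in M_k(N\ell^m,\tilde\psi)$ with $\Oo$-integral Fourier coefficients whose constant terms are supported on exactly the same set of cusps, have the same $u$-dependence, and are $p$-\emph{units}. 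The paper does this by taking $G=E_1(\tilde\psi\varphi)\cdot E_{k-1}(\varphi^{-1})$ for an auxiliary Dirichlet character $\varphi$ of conductor $\ell^m$, and the $p$-unit requirement on the constant term amounts to $L(\tilde\psi\varphi,0)L(\varphi^{-1},2-k)\in\Oo^\times$; the existence of such a $\varphi$ is a theorem of Washington type (Sun's generalization). That is what the parameter $m$ controls---it is the conductor exponent of $\varphi$---not, as you suggest, a device to fix the $p$-adic valuation of the constant term of the stabilized Eisenstein series (which is independent of $m$ and already equals $\val_p(\eta(\tilde\psi,k))$). Without the product construction and the Washington-type input, your plan yields at best a mod-$\varpi$ congruence in the style of Deligne--Serre, which is exactly what the paper contrasts its method against; it does not give the full bound $\#\bfT/J\ge\#\Oo/\eta(\tilde\psi,k)$.

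The remaining steps you sketch---extending $H$ to an $\Oo$-basis of $S_k(\fN,\Oo)$ using that some Fourier coefficient of $H$ is a $p$-unit, and reading off the ring homomorphism $\bfT/J\to\Oo/\eta(\tilde\psi,k)$ from the matrix coefficient in that basis---are essentially as in the paper, though note that $H$ need not be an eigenform, so the argument goes through the $\Oo$-module structure of $S_k$ rather than through an eigensystem as your wording suggests.
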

\begin{rem}
We note that our proof in fact shows that  $\#\tilde \bfT/\tilde J \geq \#\Oo/\eta(\tilde{\psi}, k)$, where $\tilde \bfT$ is the Hecke algebra including $T_p$, and $\tilde J$ has the additional generator $T_p-(1+\tilde \psi(p)p^{k-1})$. Note that  $\bfT/J \twoheadrightarrow \tilde \bfT/\tilde J$. We do not use the congruence module $ \tilde \bfT/\tilde J$ in this paper, but for other applications it might be of interest that the corresponding cusp forms congruent to the Eisenstein series are ordinary at $p$. Let us also note that for Proposition \ref{T/J} we allow for the primes dividing $\fN$ to be congruent to 1 mod $p$.
\end{rem}

\begin{proof} [Proof of Proposition \ref{T/J}] We partially  adapt arguments from lectures notes by Skinner from 2002 which treat the case of weight $k=2$ (making explicit Wiles' argument in the proof of the totally real Iwasawa Main Conjecture).

If $\eta(\tilde{\psi}, k) \in \Oo^{\times}$ then there is nothing to prove. So assume ${\rm val}_{\varpi}(\eta(\tilde{\psi}, k))>0$.
Let $\phi$ be a non-trivial Dirichlet character of conductor $M$ such that $\phi(-1) = (-1)^l$ for $l\geq 1$. Set $$E_l(\phi) = \frac{L(\phi, 1-l)}{2} + \sum_{n=1}^{\infty} \left( \sum_{d \mid n}\phi(d)d^{l-1}\right) q^n   \in M_l(M, \phi)$$ to be the Eisenstein series of weight $l$ whose constant term is $L(\phi, 1-l)/2$ (cf.~\cite{Miyake89}, Theorem 4.7.1). 
\begin{prop}[\cite{Ozawa17} Proposition 0.3]
If $l=2$ assume that $\phi \neq1$. The constant term of $E_l(\phi)$ at the cusp $[u:v] \in \bfP^1(\bfQ)$ equals $\phi(u)^{-1}L(\phi, 1-l)/2$ if $M \mid v$ and zero otherwise. 
\end{prop}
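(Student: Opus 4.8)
The plan is to realize $E_l(\phi)$, up to an explicit nonzero scalar, as a convergent Eisenstein lattice sum, and then to read off the constant term at an arbitrary cusp by expanding the weight-$l$ slash action directly.

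\textbf{Step 1: a lattice-sum model.} For $l\ge 3$ I would put
$$F(z)=\sum_{(c,d)\in\bfZ^2\setminus\{0\}}\overline{\phi}(d)\,(cMz+d)^{-l},$$
which converges absolutely. Computing the $c=0$ terms as $\sum_{d\ne0}\overline{\phi}(d)d^{-l}=2L(\overline{\phi},l)$ (using $\overline{\phi}(-1)=(-1)^l$), and expanding the $c\ne0$ terms by the Lipschitz formula $\sum_{n\in\bfZ}(w+n)^{-l}=\tfrac{(-2\pi i)^l}{(l-1)!}\sum_{j\ge1}j^{l-1}e^{2\pi ijw}$ (for $\mathrm{Im}\,w>0$, with the sign-adjusted version for $\mathrm{Im}\,w<0$) together with the Gauss-sum identity $\sum_{r\bmod M}\overline{\phi}(r)e^{2\pi ijr/M}=\phi(j)\,g(\overline{\phi})$ for the primitive character $\overline{\phi}$, one obtains
$$F(z)=2L(\overline{\phi},l)+\frac{2(-2\pi i)^l g(\overline{\phi})}{M^l(l-1)!}\sum_{n\ge1}\Bigl(\sum_{d\mid n}\phi(d)d^{l-1}\Bigr)q^n.$$
Matching $q^n$-coefficients gives $E_l(\phi)=C_{l,\phi}\cdot F$ with $C_{l,\phi}=\tfrac{M^l(l-1)!}{2(-2\pi i)^l g(\overline{\phi})}$; that the constant term of $E_l(\phi)$ is $L(\phi,1-l)/2$ then amounts to $C_{l,\phi}\cdot 2L(\overline{\phi},l)=L(\phi,1-l)/2$, i.e.\ to the functional equation of $L(\phi,\cdot)$. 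For $l=2$ I would run the identical computation through Hecke's trick (insert $|cMz+d|^{-2s}$ and continue to $s=0$); the hypothesis $\phi\ne1$ is used exactly here, since the extra non-holomorphic term is a constant times $\sum_{r\bmod M}\overline{\phi}(r)=0$.

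\textbf{Step 2: the constant term at $[u:v]$.} Choose coprime integers $u,v$ representing the cusp and complete to $\gamma=\left(\begin{smallmatrix}u&b\\v&d\end{smallmatrix}\right)\in\SL_2(\bfZ)$, so $ud-vb=1$. Substituting $\gamma z=\tfrac{uz+b}{vz+d}$ directly gives
$$F\vert_l\gamma(z)=\sum_{(c,d')\ne0}\overline{\phi}(d')\,\bigl((cMu+d'v)z+(cMb+d'd)\bigr)^{-l},$$
and as $\mathrm{Im}\,z\to\infty$ the only surviving terms are those with $cMu+d'v=0$. Writing $g_0=\gcd(v,M)$, $v=g_0v'$, $M=g_0M'$ (so $\gcd(v',M')=1$), these are exactly $(c,d')=(v'k,-M'uk)$ for $k\in\bfZ\setminus\{0\}$, for which $cMb+d'd=-M'k$ by $ud-vb=1$. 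Hence the constant term of $F\vert_l\gamma$ is
$$\overline{\phi}(u)\,\overline{\phi}(M')\,(M')^{-l}\sum_{k\ne0}\frac{\overline{\phi}(k)}{k^l}=2\,\overline{\phi}(u)\,\overline{\phi}(M')\,(M')^{-l}L(\overline{\phi},l).$$
Since $M'\mid M$, the factor $\overline{\phi}(M')$ vanishes unless $M'=1$, i.e.\ unless $M\mid v$; and when $M\mid v$ it equals $2\overline{\phi}(u)L(\overline{\phi},l)$. Multiplying by $C_{l,\phi}$ and using the normalization from Step 1 turns this into $\overline{\phi}(u)L(\phi,1-l)/2=\phi(u)^{-1}L(\phi,1-l)/2$ (note $M\mid v$ forces $\gcd(u,M)=1$), and into $0$ when $M\nmid v$ --- which is the assertion.

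\textbf{Step 3: the delicate point.} The only genuine obstacle is pinning down the scalar $C_{l,\phi}$, i.e.\ the Gauss-sum and functional-equation bookkeeping in Step 1; once the constant term at $\infty$ is matched, Step 2 is elementary lattice combinatorics. What remains is routine: checking that the $l=2$ case really does go through Hecke's analytic continuation with no spurious term (this is where $\phi\ne1$ enters), and that the answer is independent of the auxiliary entries $b,d$ of $\gamma$ and consistent with $[u:v]=[-u:-v]$ --- the latter because $F\vert_l(-\gamma)=(-1)^lF\vert_l\gamma$ matches $\phi(-u)^{-1}=\phi(-1)\phi(u)^{-1}$. One could instead argue adelically, computing the constant term of the $\GL_2(\mathbf{A})$-Eisenstein series attached to the pair of Hecke characters $(1,\phi)$ as a product of local integrals --- the condition $M\mid v$ then reflecting the support of the ramified new-vector and the factor $\phi(u)^{-1}$ its value on the relevant coset --- but the elementary computation above is shorter.
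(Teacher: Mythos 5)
The paper does not actually prove this statement; it cites \cite{Ozawa17}, Proposition 0.3, as a black box. Your sketch is a direct and essentially correct elementary proof via the classical lattice-sum model $F(z)=\sum_{(c,d)\neq 0}\overline{\phi}(d)(cMz+d)^{-l}$ and an explicit computation of $F|_l\gamma$, and I checked the combinatorics in Step 2 in detail: from $cMu+d'v=0$ and $\gcd(u,v)=1$ you correctly get $(c,d')=(v'k,-M'uk)$, then $cMb+d'd=-M'k$ from $ud-vb=1$, and the parity factors $\overline{\phi}(-1)(-1)^{-l}=1$ cancel cleanly, giving $2\,\overline{\phi}(u)\overline{\phi}(M')(M')^{-l}L(\overline{\phi},l)$. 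The $M'\mid M$ observation then yields the dichotomy $M\mid v$ versus the constant term vanishing, as claimed. What your argument buys over the citation is transparency and self-containedness; what Ozawa's treatment buys (and the reason the paper cites it) is that it is formulated uniformly enough to feed directly into the later computation of the constant terms of the product $G=E_1(\tilde\psi\varphi)\cdot E_{k-1}(\varphi^{-1})$ and of the level-raised series $F_m$, without redoing the bookkeeping each time.

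Two loose ends worth making explicit, both of which you already flag. First, for odd $l$ the constant term at a cusp $[u:v]$ depends on the chosen representative up to the sign $(-1)^l$, since $F|_l(-\gamma)=(-1)^l F|_l\gamma$; your consistency check $\phi(-u)^{-1}=\phi(-1)\phi(u)^{-1}$ is exactly the right observation, but for the formula to be a well-posed statement one must either assume $l$ even or fix a scaling matrix per cusp. This matters for the paper, since $E_1$ and $E_{k-1}$ with $k$ even are both of odd weight; the sign ambiguity cancels in the product $G$, but it should be said. Second, for $l=2$ the Hecke-trick continuation needs to be run not just for $F$ itself but for the whole family $F_s|_l\gamma$, and one must check that the spurious non-holomorphic term still carries the factor $\sum_{r\bmod M}\overline\phi(r)=0$ after the $\gamma$-substitution; you note this is where $\phi\neq 1$ enters, and that is correct, but it is the one place the "routine" label hides an actual estimate.
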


By a generalisation of a result of Washington (see \cite{Sun10} Theorem 4) we know that there exists an auxiliary character $\varphi$ of conductor $\ell^m$ for some $m>0$ (which we fix from now on) with $\varphi(-1)=(-1)^{k-1}$ such that \be \label{punit} L(\tilde \psi \varphi, 0) L(\varphi^{-1}, 2-k) \in \Oo^{\times}.\ee 

Then we put
$$G:=E_1( \tilde \psi \varphi) \cdot E_{k-1}( \varphi^{-1}) \in M_k(N\ell^m, \tilde \psi)$$ and deduce that its constant terms are
$$\begin{cases}
\tilde \psi^{-1}(u) \frac{L(\tilde \psi \varphi, 0) L(\varphi^{-1}, 2-k)}{4}& \text{if } \, N \ell^m \mid v\\
0& \text{else}.
\end{cases}$$

In the following we will use $G$, which clearly has $p$-integral Fourier coefficients and a constant term which is a $p$-unit, to prove a congruence of the following Eisenstein series to a cusp form.
Put
$$F_m(z):=E_k(\tilde \psi)(\ell^{m-1}z) - \tilde{\psi}(\ell)\ell^k E_k(\tilde \psi)(\ell^m z).$$ We apply Proposition 1.2 in \cite{BillereyMenares16} (generalized to $k \geq 2$ (and $\psi\neq 1$ if $k=2$) in \cite{BillereyMenares18} Proposition 4) with $M=\ell^{m-1}$ (for $E_k(\tilde \psi)(\ell^{m-1}z)$) and $M=\ell^m$ (for $E_k(\ell^m z)$) to compute that the constant term of $F_m$ at the cusp $[u:v]$ equals $$-\tilde{\psi}(u)^{-1}\frac{B_{k, \tilde{\psi}}}{2k}(1-\tilde{\psi}(\ell)\ell^k) = \tilde{\psi}(u)^{-1}\frac{L(\tilde{\psi}, 1-k)}{2}(1-\tilde{\psi}(\ell)\ell^k)$$ if $N\ell^m \mid v$ and zero otherwise.

This now allows us to get a bound on $\bfT/J$: 
Define $$H=F_m- \frac{\eta(\tilde{\psi}, k)}{a_0(G)k} \cdot G,$$ where $a_0(G)$ denotes the constant term of $G$ at infinity (which is a $p$-unit - see above).
Then the previous discussion shows that $H \in S_k(N\ell^m, \tilde \psi)$ with $q$-expansion coefficients in $\Oo$.

We can then define a surjective $\Oo$-algebra homomorphism $\phi: \bfT/J \twoheadrightarrow \Oo/\eta(\tilde{\psi}, k)$
 such that $T_q \mapsto 1+ \tilde \psi(q) q^{k-1}$ for all primes $q \nmid N \ell p$ as follows: 

First note that $H$ has a Fourier coefficient which is a $p$-unit. To see this, note $a_{\ell^{m-1}}(F_m)=a_1(E_k(\tilde \psi))=1$, so $$a_{\ell^{m-1}}(H)=1-\frac{\eta(\tilde{\psi}, k)}{a_0(G)k}  \cdot a_{\ell^{m-1}}(G) \in \Oo^{\times},$$ where $a_n$ denotes the $n$-th Fourier coefficient of the respective modular form. 

This allows us to extend $H$ to an $\Oo$-basis of $S_k(\ell^m N, \Oo)$, say $m_0=H, m_1, \ldots m_r$. Let $t \in \bfT$. Then $$t m_0=\sum_{i=0}^r \lambda_i(t) m_i, \, \text{ for } \lambda_i(t) \in \Oo.$$ 
We can now define the (surjective) $\Oo$-module homomorphism $\phi: \bfT\to \Oo/\eta(\tilde{\psi},k)$ by $\phi(t)=\lambda_0(t)$ (mod $\eta(\tilde{\psi},k))$, and it is easy to check that this, in fact, is even a ring homomorphism, and that it factors through $\bfT/J$ since $T_q-1-\tilde \psi(q) q^{k-1}$ annihilates $F_m$.
\end{proof}

\begin{rem}
Dummigan-Fretwell \cite{DummiganFretwell14}, Billerey-Menares \cite{BillereyMenares18}, and Spencer \cite{Spencer18} use similar linear combinations of Eisenstein series to prove mod $p$  congruences using the Deligne-Serre lifting lemma. Note, however, that our $F_m$ has non-vanishing constant terms only for $N \ell^m \mid v$, which makes it possible to remove them by using the auxiliary $G$ and prove the full expected $\bfT/J$ bound. By \cite{BergerKlosinKramer14} Proposition 4.3 this gives a lower bound on the amount and depth of Eisenstein congruences:

For a Hecke eigensystem $\lambda: \bfT \to \Oo$ write $m_{\lambda}$ for the depth of its  $p$-adic congruence with $E_k(\tilde \psi)$, i.e., $m_{\lambda}$ is the largest integer $s$ such that $\lambda(T_{\ell})\equiv 1+\tilde{\psi}(\ell)\ell^{k-1}$ mod $\varpi^s$ for every $\ell \not\in \Sigma$. Write $e$ for the ramification index of $\Oo$ over $\bfZ_p$. Then combining Proposition\ref{T/J} with \cite{BergerKlosinKramer14} Proposition 4.3  we obtain $$\frac{1}{e} \sum_{\lambda} m_{\lambda} \geq \val_p(\#\bfT/J) \geq \val_p(\#\Oo/\eta(\tilde{\psi}, k)).$$ 
\end{rem}

\begin{prop} \label{upper bound 1} One has $\# H^1_{\Sigma}(\bfQ, \tilde{\chi}^{-1} \otimes E/\Oo) \leq \#\Oo/\eta(\tilde{\psi}, k)$. \end{prop}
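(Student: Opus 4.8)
The plan is to bound $H^1_{\Sigma}(\bfQ,\tilde\chi^{-1}\otimes E/\Oo)$ by the $p$-adic $L$-value it is expected to compute, using that over $\bfQ$ the relevant instance of the Bloch--Kato conjecture is a theorem (Mazur--Wiles, equivalently the Euler system of cyclotomic units). Write $W:=\tilde\chi^{-1}\otimes E/\Oo=(E/\Oo)(\tilde\psi^{-1}\epsilon^{1-k})$, a corank-one divisible $\Oo$-module. The first step is a reduction to the case $\Sigma=\{p\}$. Since $H^1_{\Sigma}(\bfQ,W)$ imposes the crystalline condition only at $p$ and nothing at the other primes of $\Sigma$, there is an exact sequence
$$0 \longrightarrow H^1_{\{p\}}(\bfQ,W) \longrightarrow H^1_{\Sigma}(\bfQ,W) \longrightarrow \bigoplus_{\ell \in \Sigma \setminus \{p\}} \frac{H^1(\bfQ_\ell,W)}{H^1_{\textup{ur}}(\bfQ_\ell,W)},$$
so it suffices to bound $\#H^1_{\{p\}}(\bfQ,W)$ together with each local quotient. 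For $\ell \nmid \cond(\tilde\psi)$ the module $W$ is unramified at $\ell$, and inflation--restriction identifies $H^1(\bfQ_\ell,W)/H^1_{\textup{ur}}(\bfQ_\ell,W)$ with the $\Frob_\ell$-invariants of $H^1(I_\ell,W)\cong W(-1)$, which has order $\#\Oo/(1-\tilde\psi(\ell)\ell^k)$; for $\ell \mid \cond(\tilde\psi)$ one checks that $W$ (and its reduction, which has the same conductor) is ramified at $\ell$, so $W^{I_\ell}=0$ and, since $W^{*}=\Oo(\tilde\psi\epsilon^{k})$ likewise has no $G_{\bfQ_\ell}$-invariants even residually, $H^1(\bfQ_\ell,W)=0$. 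This reproduces exactly the Euler factors in $\eta(\tilde\psi,k)$, so the proposition reduces to the inequality $\#H^1_{\{p\}}(\bfQ,W)\le \#\Oo/B_k(\tilde\psi)$.

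For this last inequality I would argue as follows. Because $E_k(\tilde\psi)\neq 0$ we have $\tilde\psi(-1)=(-1)^k$, so $\tilde\psi^{-1}\epsilon^{1-k}$ is an odd character; moreover $1-k\le -1$, so the crystalline condition at $p$ is the ``strict'' Greenberg condition, $H^1_{\{p\}}(\bfQ,W)=H^1_f(\bfQ,W)$ is finite, and its Pontryagin dual is a quotient of the minus-part cyclotomic Iwasawa module $X_\infty^{(\tilde\psi^{-1})}$ specialised at the height-one prime $P_k$ cutting out weight $k$. The hypothesis $2\le k\le p-1$ puts everything in the Fontaine--Laffaille range, so this identification is clean and the interpolation factor at $p$ is a unit. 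Combining the control theorem (which gives $\#\bigl(X_\infty^{(\tilde\psi^{-1})}/P_k X_\infty^{(\tilde\psi^{-1})}\bigr)\ge \#H^1_f(\bfQ,W)$ up to that unit), the divisibility $\mathrm{char}\,X_\infty^{(\tilde\psi^{-1})}\mid L_p$ furnished by the Iwasawa Main Conjecture over $\bfQ$, and the interpolation property $L_p(P_k)\sim L(\tilde\psi^{-1},1-k)\sim B_k(\tilde\psi)/k$ (with $1/k$ absorbed since $k$ is a $p$-adic unit) yields the bound. Alternatively one can cite Selmer-group bounds of this type already in the literature, e.g.\ \cite{BergerKlosin13}, Section~5.

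The \textbf{main obstacle} is the bookkeeping in the second step: matching the specialisation of the Kubota--Leopoldt $p$-adic $L$-function, with all its Euler and interpolation factors at $p$, to the archimedean value $B_k(\tilde\psi)$, and checking that the descent from $X_\infty^{(\tilde\psi^{-1})}$ to $H^1_f(\bfQ,W)$ introduces no spurious error terms --- here the potential $H^0$- and $H^2$-contributions vanish because $\chi$ is ramified at $p$ (so $\ov\chi\ne 1$), and because $\tilde\psi$ and $\ov\psi$ are ramified at exactly the primes dividing $\cond(\tilde\psi)$. Everything else --- the local computations of the first step and the invocation of the Main Conjecture --- is standard, and since only the inequality ``$\le$'' is required we may freely discard any finite terms that would appear in an exact class-number-type formula.
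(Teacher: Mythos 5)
Your proposal follows essentially the same route as the paper: a reduction to the $\Sigma=\{p\}$ case by bounding the contribution of each $\ell\in\Sigma\setminus\{p\}$ by the corresponding Euler factor (the paper packages this as Lemma~\ref{lower bound 21}, applied with $n=k-1$ at finite levels $W_s$ rather than with the divisible module $W$ directly), followed by an appeal to the Mazur--Wiles Main Conjecture and the interpolation property of the Kubota--Leopoldt $p$-adic $L$-function to get $\#H^1_{\{p\}}(\bfQ,W)\le\#\Oo/B_k(\tilde\psi)$. Your local analysis at the ramified primes ($H^1(\bfQ_\ell,W)=0$ when $\ell\mid\cond(\tilde\psi)$, so no Euler factor) and at the unramified primes (order of $\Frob_\ell$-invariants of $W(-1)$ gives $\#\Oo/(1-\tilde\psi(\ell)\ell^k)$) agrees with the paper's treatment. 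The one detail you flag as ``bookkeeping'' but don't quite name is the key technical input that makes the Iwasawa-theoretic specialisation yield an inequality: the fact that $X_{\infty,\Psi}$ has no finite $\Lambda$-submodules (Mazur--Wiles, \cite{MazurWiles84}~Prop.~1, p.~193), which is what gives $\#X_{\infty,\Psi}/(T-(\kappa_0^{1-k}-1))\le\#\Lambda/(g_\Psi,T-(\kappa_0^{1-k}-1))$; your attribution of the error terms to vanishing $H^0/H^2$ in a control theorem is not quite the right diagnosis, but the conclusion is the same and the overall argument is sound.
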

\begin{proof} Consider the following diagram of fields with corresponding Galois groups:
$$\xymatrix{&L_{\infty}\ar@{-}[d]^{X_{\infty}}\\ &\bfQ_{\infty}\bfQ(\tilde{\psi}\omega^{k-1})\ar@{-}[dl]^{\Delta}\ar@{-}[dr]^{\Gamma}\\
\bfQ_{\infty}\ar@{-}[dr]^{\Gamma=\ov{<\gamma>}\cong \bfZ_p}&&\bfQ(\tilde{\psi}\omega^{k-1})\ar@{-}[dl]^{\Delta}\\
&\bfQ
}$$ Here $\bfQ(\tilde{\psi}\omega^{k-1})$ denotes the splitting field of $\tilde{\psi}\omega^{k-1}$ and $L_{\infty}$ is the maximal abelian extension of $\bfQ_{\infty}\bfQ(\tilde{\psi}\omega^{k-1})$ unramified everywhere.

We first prove that \be \label{MCupper} \# H^1_{\{p\}}(\bfQ, \tilde{\chi}^{-1} \otimes E/\Oo) \leq \#\Oo/B_k(\tilde{\psi}).\ee This follows from the Main Conjecture of Iwasawa theory proven my Mazur-Wiles, as we briefly explain for the convenience of the reader:
 For $K=\bfQ$ or $\bfQ_{\infty}$ and $\varphi$ a character of $G_K$ put $$H^1_{\rm Gr}(K, E/\Oo(\varphi)):=\ker(H^1(K,E/\Oo(\varphi)) \to \prod_{v} H^1(I_v, E/\Oo(\varphi))).$$
A result of Flach (see \cite{Ochiai00} Proposition 4.1(1)) tells us that $$H^1_{\{p\}}(\bfQ,  E/\Oo(\tilde \psi^{-1} \epsilon^{1-k})) \subseteq H^1_{\rm Gr}(\bfQ, E/\Oo(\tilde \psi^{-1} \epsilon^{1-k})).$$
Let $\Psi=\tilde \psi^{-1} \omega^{1-k}$ and $X_{\infty, \Psi}$ be the $\Psi$-isotypical component of $X_{\infty}$ for the action of $\Delta$. 
We have $X_{\infty, \Psi}=\Hom(H^1_{\rm Gr}(\bfQ_{\infty}, E/\Oo(\Psi)), E/\Oo).$
Using the $\Gamma$-module structure of $X_{\infty, \Psi}$ from this we get $$X_{\infty, \Psi}/(T-(\kappa_0^{1-k}-1))=\Hom(H^1_{\rm Gr}(\bfQ, E/\Oo(\Psi (\epsilon/\omega)^{1-k})), E/\Oo),$$ where $\kappa_0=(\epsilon/\omega)(\gamma)$. Since both modules are finite and $\Psi (\epsilon/\omega)^{1-k}=\tilde \psi^{-1} \epsilon^{1-k}$ we get 
$$\#H^1_{\rm Gr}(\bfQ, E/\Oo(\tilde \psi^{-1} \epsilon^{1-k}))=\#X_{\infty, \Psi}/(T-(\kappa_0^{1-k}-1)).$$
Since $X_{\infty, \Psi}$ has no finite $\Lambda:=\bfZ_p[[\Gamma]]$-submodules (see \cite{MazurWiles84} Proposition 1 on p. 193) one obtains
$$\#X_{\infty, \Psi}/(T-(\kappa_0^{1-k}-1)) \leq \# \Lambda/(g_{\Psi}, T-(\kappa_0^{1-k}-1)), $$ where $g_{\Psi} \in \Lambda$ is the characteristic power series of $X_{\infty, \Psi}$.
By the Main Conjecture (see \cite{MazurWiles84} Theorem p. 214) we have $$g_{\Psi}(\kappa_0^s-1)=L_p(\omega \Psi^{-1}, s), $$ where the latter is the $p$-adic $L$-function with the following interpolation property (see \cite{Washingtonbook} Theorem 5.11):  $$L_p(\omega \Psi^{-1}, 1-n)=-(1-\tilde \psi(p)p^{n-1}) \frac{B_n(\tilde \psi)}{n},  \text{ for } n \geq 1.$$
Setting $n=k$ and observing that $(1-\tilde \psi(p)p^{k-1}) \in \Oo^{\times}$ we obtain \eqref{MCupper}.

A repeated application of Lemma \ref{lower bound 21} in the next section (by selecting $s$ in that lemma to be sufficiently large and taking $n$ in that lemma to be $k-1$) leads us now to the bound by $\eta(\tilde{\psi}, k)$ on $H^1_{\Sigma}(\bfQ, \tilde{\chi}^{-1}\otimes E/\Oo)$.
\end{proof}

From now on assume that $\tilde{\psi}$, $\Sigma$ and $\bfT$ are as in Proposition \ref{T/J}.  
By combining Propositions \ref{3rdinequality}, \ref{T/J} and \ref{upper bound 1} we obtain the following corollary.
\begin{cor} \label{allequal}
We have $$\#\bfT/J = \#\Oo/\eta(\tilde{\psi}, k)=\# H^1_{\Sigma}(\bfQ, \tilde{\chi}^{-1} \otimes E/\Oo).$$
\end{cor}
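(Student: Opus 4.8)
The plan is simply to chain together the three inequalities already at our disposal, so there is essentially no new mathematical input. First I would invoke the running hypothesis (stated just before the corollary) that $\tilde{\psi}$, $\Sigma$ and $\bfT$ are exactly as in Proposition \ref{T/J}; in particular $\fN = N\ell^m$ for the specific $m$ produced there and, if $k=2$, then $\psi\neq 1$. Under these hypotheses Proposition \ref{T/J} gives $\#\bfT/J \geq \#\Oo/\eta(\tilde{\psi},k)$, Proposition \ref{3rdinequality} gives $\# H^1_{\Sigma}(\bfQ,\tilde{\chi}^{-1}\otimes E/\Oo) \geq \#\bfT/J$, and Proposition \ref{upper bound 1} gives $\# H^1_{\Sigma}(\bfQ,\tilde{\chi}^{-1}\otimes E/\Oo) \leq \#\Oo/\eta(\tilde{\psi},k)$.

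Next I would assemble these into the cyclic chain
\[
\#\Oo/\eta(\tilde{\psi},k) \;\geq\; \# H^1_{\Sigma}(\bfQ,\tilde{\chi}^{-1}\otimes E/\Oo) \;\geq\; \#\bfT/J \;\geq\; \#\Oo/\eta(\tilde{\psi},k),
\]
where the three steps are Propositions \ref{upper bound 1}, \ref{3rdinequality} and \ref{T/J} respectively. Since the first and last terms are literally the same quantity, every inequality in the chain must be an equality, which is exactly the asserted statement $\#\bfT/J = \#\Oo/\eta(\tilde{\psi},k) = \# H^1_{\Sigma}(\bfQ,\tilde{\chi}^{-1}\otimes E/\Oo)$.

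The only point requiring care is finiteness, so that the cardinalities are well-defined and a cyclic chain of inequalities between them genuinely forces equality: $\#\bfT/J<\infty$ is Remark \ref{finiteness1}, finiteness of the Selmer group follows from the upper bound of Proposition \ref{upper bound 1}, and $\eta(\tilde{\psi},k)\neq 0$ (so that $\#\Oo/\eta(\tilde{\psi},k)<\infty$) either because we are in the trivial case $\eta(\tilde{\psi},k)\in\Oo^{\times}$, where all three orders equal $1$, or because $\val_{\varpi}(\eta(\tilde{\psi},k))>0$ as in the proof of Proposition \ref{T/J}. I do not expect any obstacle here: the substantive work --- the Eisenstein-series congruence construction behind Proposition \ref{T/J}, the Iwasawa-theoretic (Mazur--Wiles) input behind Proposition \ref{upper bound 1}, and the passage from $B_k(\tilde{\psi})$ to $\eta(\tilde{\psi},k)$ via Lemma \ref{lower bound 21} --- has already been carried out, and this corollary is just the bookkeeping that records the resulting equalities.
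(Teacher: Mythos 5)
Your proof is correct and is exactly the argument the paper has in mind: the paper introduces the corollary with the single line ``By combining Propositions \ref{3rdinequality}, \ref{T/J} and \ref{upper bound 1} we obtain the following corollary,'' which is precisely the cyclic chain of inequalities you wrote out. The extra care you take over finiteness is sound but not strictly needed once one has the chain.
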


Then in the case $F=\bfQ$ we obtain the following stronger versions of Theorem \ref{main3} and Corollary \ref{main4}. 
\begin{cor} \label{sum up 2} If $\dim_{\bfF} H^1_{\Sigma}(\bfQ, \chi)=1$ and the Eisenstein ideal $J$ is not principal, then $\#\fT_{\rm mod} > \dim_{\bfF}H^1_{\Sigma}(\bfQ, \chi^{-1})$. \end{cor}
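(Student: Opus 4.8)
The plan is to deduce Corollary \ref{sum up 2} directly from Theorem \ref{main3} by discharging its hypothesis $\# H^1_{\Sigma}(\bfQ, \tilde{\chi}^{-1}\otimes E/\Oo)\leq \#\bfT/J$. First I would invoke Corollary \ref{allequal}, which was obtained precisely by combining Proposition \ref{3rdinequality} (the always-true inequality $\# H^1_{\Sigma}(\bfQ, \tilde{\chi}^{-1}\otimes E/\Oo) \geq \#\bfT/J$), Proposition \ref{T/J} (the lower bound $\#\bfT/J \geq \#\Oo/\eta(\tilde{\psi}, k)$), and Proposition \ref{upper bound 1} (the upper bound $\# H^1_{\Sigma}(\bfQ, \tilde{\chi}^{-1}\otimes E/\Oo) \leq \#\Oo/\eta(\tilde{\psi}, k)$). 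These three give a chain of inequalities
$$\#\Oo/\eta(\tilde{\psi}, k) \leq \#\bfT/J \leq \# H^1_{\Sigma}(\bfQ, \tilde{\chi}^{-1}\otimes E/\Oo) \leq \#\Oo/\eta(\tilde{\psi}, k),$$
forcing equality throughout. In particular the hypothesis of Theorem \ref{main3} holds — indeed with equality rather than mere inequality — so there is nothing further to check on that front.

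Second, with that hypothesis in hand, I would simply apply Theorem \ref{main3} verbatim: under the standing assumptions of this section (namely $F=\bfQ$, and $\tilde{\psi}$, $\Sigma$, $\bfT$ as in Proposition \ref{T/J}, which is exactly the ``From now on assume\ldots'' convention in force here), together with the two hypotheses of the Corollary — $\dim_{\bfF} H^1_{\Sigma}(\bfQ, \chi)=1$ and $J$ non-principal — Theorem \ref{main3} yields $\#\fT_{\rm mod} > \dim_{\bfF} H^1_{\Sigma}(\bfQ, \chi^{-1})$, which is the desired conclusion. The logical skeleton is: Corollary \ref{allequal} removes the ``input inequality'' assumption that Theorem \ref{main3} carried as a hypothesis for general $F$, and over $\bfQ$ that assumption is now a theorem; everything else passes through unchanged.

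There is essentially no real obstacle at this stage — all the substance was front-loaded into Propositions \ref{T/J}, \ref{upper bound 1} and \ref{3rdinequality}, and into the chain of results Proposition \ref{main1} $\to$ Theorem \ref{prin}/Proposition \ref{our sit} $\to$ Theorem \ref{main3}. The only thing worth stating explicitly in the write-up is that the running hypotheses of the section (the choice of $\Sigma$, the level $\fN = N\ell^m$, and the restriction ``if $k=2$ then $\psi\neq 1$'') are indeed those needed for Corollary \ref{allequal}, so that its conclusion — and hence the hypothesis of Theorem \ref{main3} — is legitimately available. Thus the proof is a one-line deduction: combine Corollary \ref{allequal} with Theorem \ref{main3}.

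\begin{proof}
By Corollary \ref{allequal} we have $\#\bfT/J = \# H^1_{\Sigma}(\bfQ, \tilde{\chi}^{-1} \otimes E/\Oo)$, so in particular the hypothesis $\# H^1_{\Sigma}(\bfQ, \tilde{\chi}^{-1}\otimes E/\Oo)\leq \#\bfT/J$ of Theorem \ref{main3} is satisfied. The claim now follows immediately from Theorem \ref{main3}.
\end{proof}
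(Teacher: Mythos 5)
Your proof is correct and is exactly the argument the paper intends: Corollary \ref{allequal} (which packages Propositions \ref{3rdinequality}, \ref{T/J}, and \ref{upper bound 1}) discharges the inequality hypothesis of Theorem \ref{main3}, and the conclusion follows immediately. The paper presents Corollary \ref{sum up 2} without a separate proof precisely because it is this one-line deduction under the running hypotheses of the section.
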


\begin{rem} Suppose we consider the set of extensions $\ov{\rho}=\bmat 1&*\\ & \chi\emat: G_{\Sigma'}\to\GL_2(\bfF)$ with $\chi$ ramified at all primes in $\Sigma'\supset\{p\}$. Then Corollary \ref{sum up 2} can be viewed as asserting that more than $\dim_{\bfF}H^1_{\Sigma}(\bfQ, \chi^{-1})$ of these extensions arise from modular representations $\rho_f$ which are ramified at no more than one additional prime (the prime $\ell$ in Proposition \ref{T/J}, i.e., $\Sigma = \Sigma'\cup\{\ell\}$) as long as $J$ is not principal and  $\dim_{\bfF} H^1_{\Sigma}(\bfQ, \chi)=1$.
\end{rem}
 
\begin{cor} \label{main2}  Suppose that $\dim_{\bfF} H^1_{\Sigma}(\bfQ, \chi)=\dim_{\bfF} H^1_{\Sigma}(\bfQ, \chi^{-1})=1$. Then $J$ is principal. \end{cor}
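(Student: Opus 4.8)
The plan is to deduce this immediately from Corollary \ref{sum up 2}, in exactly the same way that Corollary \ref{main4} is deduced from Theorem \ref{main3}. The key observation is that once $F=\bfQ$ and $\tilde\psi$, $\Sigma$, $\bfT$ are as in Proposition \ref{T/J}, the inequality $\# H^1_{\Sigma}(\bfQ, \tilde{\chi}^{-1}\otimes E/\Oo) \leq \#\bfT/J$ required as a hypothesis in Theorem \ref{BK main} is no longer an assumption: it holds (with equality) by Corollary \ref{allequal}. So Theorem \ref{BK main} applies unconditionally in this setting.

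First I would invoke Theorem \ref{BK main} to produce a basis $\mB$ of $H^1_{\Sigma}(\bfQ, \chi^{-1})$ consisting of modular classes; since $\dim_{\bfF}H^1_{\Sigma}(\bfQ, \chi^{-1})=1$, this gives at least one modular line. On the other hand, by Proposition \ref{genRibet} every element of $\fT_{\rm mod}$ determines a line in the one-dimensional space $H^1_{\Sigma}(\bfQ, \chi^{-1})$, and a one-dimensional $\bfF$-vector space has a unique line; hence $\#\fT_{\rm mod}\le 1$. Combining the two bounds yields $\#\fT_{\rm mod}=1=\dim_{\bfF}H^1_{\Sigma}(\bfQ, \chi^{-1})$.

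Then I would argue by contradiction: if $J$ were not principal, Corollary \ref{sum up 2} (applicable because $\dim_{\bfF} H^1_{\Sigma}(\bfQ, \chi)=1$) would force $\#\fT_{\rm mod} > \dim_{\bfF}H^1_{\Sigma}(\bfQ, \chi^{-1})=1$, contradicting $\#\fT_{\rm mod}=1$. Therefore $J$ is principal.

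There is essentially no technical obstacle here; the statement is a clean specialization of the machinery already assembled. The only point requiring a moment's care is making sure that the hypothesis of Theorem \ref{BK main} is genuinely available for $F=\bfQ$, which is precisely the content of Corollary \ref{allequal} (itself resting on Propositions \ref{3rdinequality}, \ref{T/J} and \ref{upper bound 1}). Everything else is a bookkeeping combination of Theorem \ref{BK main} and Corollary \ref{sum up 2}.
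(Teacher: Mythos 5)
Your proof is correct and follows exactly the intended route: Corollary \ref{main2} is the $F=\bfQ$ specialization of Corollary \ref{main4}, deduced from Corollary \ref{sum up 2} just as Corollary \ref{main4} is deduced from Theorem \ref{main3}, with Corollary \ref{allequal} supplying the $\#\bfT/J$ hypothesis unconditionally. The paper leaves the proof implicit precisely because it is this bookkeeping combination.
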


\subsection{Congruence criterion} The assumption that the Eisenstein ideal is not principal may be difficult to check directly, so we will translate it here into a criterion that relies on counting congruences. We still let $\tilde{\psi}$, $\Sigma$ and $\bfT$ be as in Proposition \ref{T/J}. 

For a Hecke eigensystem $\lambda: \bfT \to \Oo$ write $m_{\lambda}$ for the depth of its  $p$-adic congruence with $E_k(\tilde \psi)$, i.e., $m_{\lambda}$ is the largest integer $s$ such that $\lambda(T_{\ell})\equiv 1+\tilde{\psi}(\ell)\ell^{k-1}$ mod $\varpi^s$ for every $\ell \not\in \Sigma$. Write $e$ for the ramification index of $\Oo$ over $\bfZ_p$. 

\begin{thm} \label{main} Assume that $\dim_{\bfF}H^1_{\Sigma}(\bfQ, \chi)=1$. If $$\frac{1}{e} \sum_{\lambda} m_{\lambda} > \val_p(\#\Oo/\eta(\tilde{\psi}, k))$$ then $J$ is not principal and $\#\fT_{\rm mod} > \dim_{\bfF}H^1_{\Sigma}(\bfQ, \chi^{-1})$. \end{thm}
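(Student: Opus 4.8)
The plan is to deduce Theorem \ref{main} by combining the congruence-number inequality hypothesis with the results already established in this section. First I would recall the key numerical identity from Corollary \ref{allequal}: under the running assumptions on $\tilde\psi$, $\Sigma$ and $\bfT$, one has $\#\bfT/J = \#\Oo/\eta(\tilde\psi,k) = \# H^1_{\Sigma}(\bfQ, \tilde\chi^{-1}\otimes E/\Oo)$. In particular the hypothesis $\# H^1_{\Sigma}(\bfQ,\tilde\chi^{-1}\otimes E/\Oo) \leq \#\bfT/J$ needed for Theorem \ref{main3} and Corollary \ref{sum up 2} is automatically satisfied.

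Next I would show $J$ is not principal using the depth hypothesis. The point is that $\val_p(\#\bfT/J)$ measures the total mass of Eisenstein congruences: by the remark following the proof of Proposition \ref{T/J} (invoking \cite{BergerKlosinKramer14} Proposition 4.3), one has $\frac{1}{e}\sum_{\lambda} m_{\lambda} \geq \val_p(\#\bfT/J)$. On the other hand, if $J$ were principal, generated by a single element $x$, then $\bfT/J$ being cyclic as an $\Oo$-module of length $\ell_\Oo(\bfT/J) = [\Oo:\bfZ_p]^{-1}\cdot(\textrm{something})$—more precisely, one should use that for a principal ideal $J=(x)$ in the finite flat $\Oo$-algebra $\bfT$, the congruence depths are controlled by $\val_p(\#\bfT/J)$ from above as well, so $\frac1e\sum_\lambda m_\lambda \leq \val_p(\#\bfT/J)$. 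Combined with Corollary \ref{allequal} this would give $\frac1e\sum_\lambda m_\lambda \leq \val_p(\#\Oo/\eta(\tilde\psi,k))$, contradicting the hypothesis. Hence $J$ is not principal.

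With $J$ not principal and $\dim_{\bfF}H^1_{\Sigma}(\bfQ,\chi)=1$ in hand, the conclusion $\#\fT_{\rm mod} > \dim_{\bfF}H^1_{\Sigma}(\bfQ,\chi^{-1})$ follows immediately from Corollary \ref{sum up 2} (equivalently Theorem \ref{main3}, whose hypothesis is met by the previous paragraph).

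The main obstacle is the second step: pinning down precisely why principality of $J$ forces $\frac1e\sum_\lambda m_\lambda \leq \val_p(\#\bfT/J)$. This requires the reverse inequality to the one in the remark after Proposition \ref{T/J}. The natural route is to note that when $J=(x)$ is principal, $\bfT/J$ is a cyclic $\Oo$-module, and the Hecke eigensystems $\lambda$ correspond to the minimal primes of $\bfT$; the sum $\sum_\lambda m_\lambda$ is then bounded by the length of $\bfT/J$ over $\Oo$ via the structure of $\bfT$ as a subring of $\prod_\lambda \Oo$ and the fact that a cyclic quotient cannot spread congruences of total depth exceeding its own length—this is essentially the content of \cite{BergerKlosinKramer14} Corollary 2.7 / Proposition 4.3 applied in the principal case. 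I would make this precise by citing the relevant statement in \cite{BergerKlosinKramer14} directly rather than reproving it, since the machinery there already handles exactly this bookkeeping.
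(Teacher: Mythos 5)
Your proposal is correct and follows the paper's own proof essentially verbatim: express $\#\bfT/J$ via Corollary \ref{allequal}, derive a contradiction from assumed principality of $J$ using \cite{BergerKlosinKramer14} Corollary 2.7, then conclude via Corollary \ref{sum up 2}. One minor heuristic slip worth flagging: principality of $J$ does not make $\bfT/J$ cyclic as an $\Oo$-module, and the "length bounds the spread of congruences" intuition is not quite the mechanism; what Corollary 2.7 of \cite{BergerKlosinKramer14} actually provides (applied with $T=\bfT$, $T_\lambda=\Oo$, $J_\lambda=\varpi^{m_\lambda}\Oo$, using that each $J_\lambda$ is principal) is the exact equality $\#\bfT/J = \#\prod_\lambda T_\lambda/J_\lambda$, which translates to $\frac{1}{e}\sum_\lambda m_\lambda = \val_p(\#\Oo/\eta(\tilde{\psi},k))$ and thus the desired contradiction.
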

\begin{proof} 
Assume $J$ is principal. Writing $T_{\lambda}=\Oo$, $J_{\lambda}=\varpi^{m_{\lambda}}\Oo$, $T=\bfT$ and $J$ as before for the Eisenstein ideal, we can apply Corollary     2.7 in \cite{BergerKlosinKramer14} (again note as in Proposition \ref{main1} that the principality of the $J_{\lambda}$s) to conclude that  then $$\val_{\varpi}(\#T/J)=\val_{\varpi}\left(\#\prod_{\lambda}T_{\lambda}/J_{\lambda}\right) = \frac{[E:\bfQ_p]}{e}\sum_{\lambda} m_{\lambda}.$$
The left-hand side  equals $\val_{\varpi}(\#\Oo/\eta(\tilde{\psi}, k))$ by Corollary \ref{allequal}. Replacing $\varpi$-adic valuations with $p$-adic ones we get $\frac{1}{e} \sum_{\lambda} m_{\lambda} = \val_p(\#\Oo/\eta(\tilde{\psi}, k))$, which contradicts our assumption. So we conclude that $J$ is not principal and the proposition follows by applying Proposition \ref{main1}. \end{proof}

\section{Analysis of $H^1_{\Sigma}(\bfQ, \bfF(n))$}\label{Analysis}

In this section we prove bounds on certain Selmer groups. The assumption that $\ell \not\equiv 1$ (mod $p$) for all $\ell \mid N$ is not needed for Proposition \ref{prop1-k} and Lemma \ref{lower bound 21}.
\begin{prop} \label{prop1-k}For $2 \leq k \leq p-1$  and $k$ even we have $${\rm val}_p(\#H^1_{\Sigma}(\bfQ, \bfF(1-k)))\geq [\bfF:\bfF_p]( \min \{{\rm val}_p(B_{1, \omega^{k-1}}), 1\} +  \sum_{\ell \in \Sigma - \{p\}} \min \{\val_p(1- \ell^k), 1\}).$$
\end{prop}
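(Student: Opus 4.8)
My first move would be to pass to $\bfF_p$-coefficients: since $\bfF(1-k)=\bfF\otimes_{\bfF_p}\bfF_p(1-k)$ and Galois cohomology commutes with this flat base change, $\val_p(\#H^1_{\Sigma}(\bfQ,\bfF(1-k)))=[\bfF:\bfF_p]\dim_{\bfF_p}H^1_{\Sigma}(\bfQ,\bfF_p(1-k))$, and since for a prime $\ell\neq p$ the quantity $\min\{\val_p(1-\ell^k),1\}$ equals $1$ if $p\mid\ell^k-1$ and $0$ otherwise, it is enough to exhibit $\min\{\val_p(B_{1,\omega^{k-1}}),1\}+\#\{\ell\in\Sigma-\{p\}:p\mid\ell^k-1\}$ linearly independent classes in $H^1_{\Sigma}(\bfQ,\bfF_p(1-k))$. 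I would produce them in two families: a single ``class-group'' class already present for $\Sigma=\{p\}$, and one ramified class for each $\ell\in\Sigma-\{p\}$ with $p\mid\ell^k-1$.

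\textbf{The class-group term.} If $\min\{\val_p(B_{1,\omega^{k-1}}),1\}\le 0$ there is nothing to do (this always holds when $k=p-1$, since $B_{1,\omega^{p-2}}$ is not $p$-integral). So assume $p\mid B_{1,\omega^{k-1}}$ with $\val_p(B_{1,\omega^{k-1}})\ge 1$; then $2\le k\le p-3$ is even, so the Kummer congruence $B_{1,\omega^{k-1}}\equiv B_k/k\pmod p$ applies and gives $p\mid B_k$, i.e.\ $(p,k)$ is irregular. Ribet's theorem (the converse of Herbrand) then makes the $\omega^{1-k}$-eigenspace of $\Cl(\bfQ(\mu_p))\otimes\bfF_p$ non-zero, and inflation--restriction along the degree-prime-to-$p$ extension $\bfQ(\mu_p)/\bfQ$ identifies this eigenspace with $H^1_{\textup{unr}}(\bfQ,\bfF_p(1-k))$ (the class of the prime above $p$ lies in $p\,\Cl(\bfQ(\mu_p))$, so splitting at $p$ is no constraint mod $p$). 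An everywhere-unramified class is crystalline at $p$ and unramified outside $\Sigma$, so it already lies in $H^1_{\Sigma}(\bfQ,\bfF_p(1-k))$; alternatively this class can be extracted from the Iwasawa-theoretic computation carried out in the proof of Proposition \ref{upper bound 1}.

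\textbf{The ramified terms.} List the primes $\ell\in\Sigma-\{p\}$ with $p\mid\ell^k-1$ as $\ell_1,\dots,\ell_t$, set $\Sigma_i=\{p,\ell_1,\dots,\ell_i\}$, and observe that since enlarging $\Sigma$ only enlarges the Selmer group it suffices to treat $\Sigma=\Sigma_t$. With $M=\bfF_p(1-k)$ and $M^*=M^{\vee}(1)=\bfF_p(k)$, the step $\Sigma_{i-1}\subset\Sigma_i$ fits into the Poitou--Tate exact sequence
$$0\to H^1_{\Sigma_{i-1}}(\bfQ,M)\to H^1_{\Sigma_i}(\bfQ,M)\to \frac{H^1(\bfQ_{\ell_i},M)}{H^1_{\textup{unr}}(\bfQ_{\ell_i},M)}\to H^1_{\Sigma_{i-1}^*}(\bfQ,M^*)^{\vee}\to H^1_{\Sigma_i^*}(\bfQ,M^*)^{\vee}\to 0,$$
where $\Sigma^*$ carries the dual local conditions (crystalline at $p$, trivial at the non-$p$ primes of $\Sigma$, unramified elsewhere). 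Because $\ell_i\neq p$ and $p\mid\ell_i^k-1$, the middle term has order $\#H^2(\bfQ_{\ell_i},M)=\#H^0(\bfQ_{\ell_i},M^*)=p$. Thus the order of the Selmer group grows by exactly a factor of $p$ at this step provided the dual Selmer group $H^1_{\Sigma_{i-1}^*}(\bfQ,M^*)$ does not surject onto $H^1_{\textup{unr}}(\bfQ_{\ell_i},M^*)$ --- and this is precisely what Lemma \ref{lower bound 21} is designed to guarantee (it is that lemma applied with $n=k-1$ and $s$ large). Iterating over $i$ and adding the class from the previous paragraph would give $\dim_{\bfF_p}H^1_{\Sigma}(\bfQ,\bfF_p(1-k))\ge\min\{\val_p(B_{1,\omega^{k-1}}),1\}+t$, which is the assertion after scaling by $[\bfF:\bfF_p]$.

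\textbf{Where the difficulty lies.} The genuinely non-formal point is the appeal to Lemma \ref{lower bound 21}: a priori the factor-of-$p$ gain at $\ell_i$ could be cancelled by a factor-of-$p$ loss in the ``even-parity'' dual Selmer group $H^1_{\Sigma_{i-1}^*}(\bfQ,\bfF_p(k))$, whose unconditional vanishing is essentially a case of Vandiver's conjecture and so unavailable; plain Poitou--Tate therefore only yields $\#H^1_{\Sigma_i}\ge\#H^1_{\Sigma_{i-1}}$. The point of stating Lemma \ref{lower bound 21} with a divisible coefficient module and the auxiliary twist $\epsilon^{(p-1)s}$ is to run the argument up the cyclotomic $\bfZ_p$-extension, where the relevant Iwasawa module has no non-zero finite submodule and its characteristic ideal is pinned down by the main conjecture, forcing the ``adjoin a prime'' step to be an exact equality regardless of parity; one then specialises and reduces modulo $\varpi$, using that $\epsilon^{(p-1)s}\equiv 1\pmod p$ so the residual module is unchanged by the twist. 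I expect establishing Lemma \ref{lower bound 21} itself --- the compatible bookkeeping of the local Euler factor $1-\ell^k$ against the Iwasawa-module quotient --- to be the main technical obstacle here.
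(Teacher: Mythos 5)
Your overall architecture matches the paper's: isolate the class-group contribution from $H^1_{\{p\}}$, then add the primes $\ell\in\Sigma-\{p\}$ one at a time via a Poitou--Tate/Wiles nine-term argument. For the first piece the paper likewise identifies $H^1_{\{p\}}(\bfQ,\bfF(1-k))$ with $\Hom_{\Gal(\bfQ(\mu_p)/\bfQ)}(C_{\bfQ(\mu_p)},\bfF(1-k))$ (using Fontaine--Laffaille theory to show $H^1_f(\bfQ_p,\bfF(1-k))=H^1_{\rm ur}(\bfQ_p,\bfF(1-k))$, rather than your one-sided inclusion, but this only matters for the reverse inequality) and invokes Mazur--Wiles to control the eigenspace; your appeal to Herbrand--Ribet is enough for the $\min\{\cdot,1\}$ bound and is slightly more economical. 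For the second piece the paper proves the exact equality $\#H^1_{\Sigma'\cup\{\ell\}}(\bfQ,\bfF(-n))=q^{\min\{\val_p(\ell^{n+1}-1),1\}}\#H^1_{\Sigma'}(\bfQ,\bfF(-n))$ with $n=k-1$: this is Lemma \ref{lower bound 11}, not Lemma \ref{lower bound 21} as the paper's proof literally cites (Lemma \ref{lower bound 21} is only the upper-bound half and cannot on its own give a lower bound), and its proof is precisely the duality argument you write down.

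The genuine gap is in your last paragraph, where you resolve the dual-Selmer-group obstacle you correctly flagged. Lemma \ref{lower bound 21} is not an Iwasawa-theoretic statement: it involves no cyclotomic tower, no auxiliary twist $\epsilon^{(p-1)s}$, and no main conjecture. Its proof is a direct local computation of $H^1(I_\ell,W_s)^{G}\cong\Hom_{G}(I_\ell^{\rm tame},W_s)$ via inflation--restriction, and it yields only an upper bound; attributing to it a mechanism that ``forces the adjoin-a-prime step to be an exact equality'' is wrong. The paper's actual resolution, inside the proof of Lemma \ref{lower bound 11}, is elementary and does not require controlling an even-parity class group (so no Vandiver): one applies the upper-bound Lemma \ref{lower bound 21} to the Kummer dual $W_1^*=\bfF(n+1)$, whose relevant local Euler factor at $\ell$ is $1-\ell^{-n}$; since $\ell^{n+1}\equiv 1\pmod p$ together with the standing hypothesis $\ell\not\equiv 1\pmod p$ forces $\ell^{n}\not\equiv 1\pmod p$, that dual factor is a $p$-unit, so $H^1_{\Sigma'\cup\{\ell\}}(\bfQ,W_1^*)=H^1_{\Sigma'}(\bfQ,W_1^*)$ and hence ${\rm loc}_f$ vanishes and ${\rm loc}^s$ is surjective. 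Note this means the hypothesis $\ell\not\equiv 1\pmod p$ is in fact used here (the paper's prefatory remark in Section \ref{Analysis} exempting Proposition \ref{prop1-k} from that hypothesis appears inconsistent with the reliance on Lemma \ref{lower bound 11}); your own diagnosis of the difficulty is exactly the point at which that hypothesis enters.
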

\begin{proof}
By Fontaine-Laffaille theory (see e.g. \cite{Breuil01} Proposition 9.1.2(i)) 
any Fontaine-Laffaille $D_p$ extension $$0 \to \bfF \to \ov{\rho} \to \bfF(k-1) \to 0$$ is split on $I_p$, so $H^1_f(\bfQ_p, \bfF(1-k))=H^1_{\rm ur}(\bfQ_p, \bfF(1-k)):=(\ker(H^1(\bfQ_p, \bfF(1-k)) \to H^1(I_p, \bfF(1-k)))$.
 We therefore have $$H^1_{\{p\}}(\bfQ,  \bfF(1-k))={\rm ker}\left(H^1(\bfQ,  \bfF(1-k)) \to \prod_{\ell}H^1(I_{\ell}, \bfF(1-k))\right).$$ As  in section 2 of \cite{Skinner06} we can argue that restriction to $G_{\bfQ(\mu_p)}$ gives $$H^1_{\{p\}}(\bfQ,  \bfF(1-k))=\Hom_{\Gal(\bfQ(\mu_p)/\bfQ)}(C_{\bfQ(\mu_p)}, \bfF(1-k)),$$ where $C_{\bfQ(\mu_p)}$ denotes the class group of $\bfQ(\mu_p)$.

The $p$-primary part of $C_{\bfQ(\mu_p)}$ on which the action of $\Gal(\bfQ(\mu_p)/\bfQ)$ is via $\omega^{1-k}$  has order given by $L(0, \omega^{k-1})=-B_{1, \omega^{k-1}}$ by \cite{MazurWiles84} Theorem 2 p. 216 (see also \cite{Skinner06} Theorem 2.1.3).
This shows that $\#H^1_{\{p\}}(\bfQ, \bfF(1-k)) \geq ( \#\bfF_p/B_{1, \omega^{k-1}})^{ [\bfF:\bfF_p] }$ (equality holds if $C_{\bfQ(\mu_p)}^{\omega^{1-k}}$ is cyclic). 

The proposition now follows from Lemma \ref{lower bound 21} below applied with $n=k-1$.
\end{proof}

\begin{lemma} \label{lower bound 21}  Let $n \neq 0$ be an integer and set $m:=\val_p(\tilde \psi(\ell) \ell^{n+1} -1)$ for $\psi$ a Dirichlet character unramified away from $\Sigma-\{p\}$. Let $s\geq me$ be an integer, where $e$ is the ramification index of $\Oo$ over $\bfZ_p$. Set $W=E/\Oo(\tilde\psi^{-1} \epsilon^{-n})$ and $W_s=W[\varpi^s]$. Suppose $\ell \in \Sigma - \{p\}$ and let $\Sigma' \subset \Sigma$ with $\ell \not\in \Sigma'$.  Then one has $$\#H^1_{\Sigma' \cup \{\ell\}}(\bfQ,W_s) \leq  (\# \Oo/p^m\Oo) \#H^1_{\Sigma'}(\bfQ, W_s).$$
\end{lemma}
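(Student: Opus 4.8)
The plan is to compare the two Selmer groups $H^1_{\Sigma'\cup\{\ell\}}(\bfQ, W_s)$ and $H^1_{\Sigma'}(\bfQ, W_s)$ via the Poitou--Tate exact sequence, or more elementarily by a direct local analysis at $\ell$. Since $W_s = W[\varpi^s]$ is unramified at $\ell$ (as $\ell\in\Sigma-\{p\}$ and $\tilde\psi$ is unramified away from $\Sigma-\{p\}$ — wait, one must be careful, but $\ell$ may divide the conductor of $\tilde\psi$; in the application of Proposition~\ref{upper bound 1} however $\ell\nmid Np$ so $W_s$ is unramified at $\ell$, and in the stated generality one should assume $\ell\nmid\cond(\tilde\psi)$ or handle it the same way via the Euler factor), the difference between allowing ramification at $\ell$ or not is governed by the local cohomology group $H^1(I_\ell, W_s)^{D_\ell/I_\ell}$, equivalently by $H^1(\bfQ_\ell, W_s)/H^1_{\rm ur}(\bfQ_\ell, W_s)$. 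The key observation is the short exact sequence
$$0 \to H^1_{\Sigma'}(\bfQ, W_s) \to H^1_{\Sigma'\cup\{\ell\}}(\bfQ, W_s) \to H^1(\bfQ_\ell, W_s)/H^1_{\rm ur}(\bfQ_\ell, W_s),$$
which bounds the index $[H^1_{\Sigma'\cup\{\ell\}} : H^1_{\Sigma'}]$ by $\# H^1(\bfQ_\ell, W_s)/H^1_{\rm ur}(\bfQ_\ell, W_s)$.

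Next I would compute that last local quotient. By local Tate duality and the local Euler characteristic formula (and since $\ell\neq p$, so $W_s$ has trivial $p$-part of its $\ell$-adic Euler characteristic), one has $\# H^1(\bfQ_\ell, W_s)/H^1_{\rm ur}(\bfQ_\ell, W_s) = \# H^1_{\rm ur}(\bfQ_\ell, W_s) = \# H^0(\bfQ_\ell, W_s) = \# W_s^{D_\ell}$. Now $W_s = \Oo/\varpi^s(\tilde\psi^{-1}\epsilon^{-n})$, so $D_\ell$ acts on $W_s$ (which is unramified at $\ell$) through $\Frob_\ell \mapsto \tilde\psi^{-1}(\ell)\ell^{-n}$, hence $W_s^{D_\ell} = \ker(\tilde\psi^{-1}(\ell)\ell^{-n} - 1 \mid \Oo/\varpi^s) = \ker(\tilde\psi(\ell)\ell^{n+1}\cdot\ell^{-(n+1)}(\cdots))$; more cleanly, multiplying the eigenvalue condition by the unit $\tilde\psi(\ell)\ell^n$, the fixed part is $\ker(1 - \tilde\psi(\ell)\ell^{n+1}\cdot\ell^{-(2n+1)}?)$ — the clean statement is that $\# W_s^{D_\ell} = \# \Oo/(\tilde\psi(\ell)\ell^{n+1}-1, \varpi^s)$, and since $s \geq me$ with $m = \val_p(\tilde\psi(\ell)\ell^{n+1}-1)$, i.e. $\varpi^s \mid (\tilde\psi(\ell)\ell^{n+1}-1)$ up to... actually $me = \val_\varpi(p^m)$ and $\val_\varpi(\tilde\psi(\ell)\ell^{n+1}-1) = e\cdot m$, so $s \geq me$ gives $\# W_s^{D_\ell} = \# \Oo/\varpi^{\min(s, me)}\Oo = \#\Oo/p^m\Oo$. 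Here I should double-check the unit twist: $\tilde\psi^{-1}(\ell)\ell^{-n} - 1$ and $\tilde\psi(\ell)\ell^{n+1} - 1$ differ by the unit factor $-\tilde\psi^{-1}(\ell)\ell^{-n}$ times... $-\tilde\psi^{-1}(\ell)\ell^{-n}(\tilde\psi(\ell)\ell^{n+1}-1) = -\ell + \tilde\psi^{-1}(\ell)\ell^{-n} = \tilde\psi^{-1}(\ell)\ell^{-n} - \ell$; that is not quite it, so the correct eigenvalue to use is $\Frob_\ell$ acting as $\tilde\psi^{-1}(\ell)\ell^{-n}$ and we want $\ker(\tilde\psi^{-1}(\ell)\ell^{-n}-1)$, whose order equals $\#\Oo/(\tilde\psi^{-1}(\ell)\ell^{-n}-1)$, and this has the same $\varpi$-valuation as $\tilde\psi(\ell)\ell^{n} - 1$... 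The normalization of the cyclotomic character and whether arithmetic or geometric Frobenius is used is exactly what fixes the shift by one in the exponent, matching the $m = \val_p(\tilde\psi(\ell)\ell^{n+1}-1)$ in the statement. Pinning down this Frobenius-normalization bookkeeping so that the exponent comes out as $n+1$ is the step I expect to be the main (if minor) obstacle.

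Assembling: combining the exact sequence with the local computation yields
$$\# H^1_{\Sigma'\cup\{\ell\}}(\bfQ, W_s) \leq (\#\Oo/p^m\Oo)\cdot \# H^1_{\Sigma'}(\bfQ, W_s),$$
which is the claim. I would remark that the only real inputs are: (i) the inflation-restriction/Greenberg-style diagram identifying the relaxed-minus-strict Selmer quotient with a subgroup of $H^1(\bfQ_\ell,W_s)/H^1_{\rm ur}$; (ii) that the condition at $p$ and at places of $\Sigma'$ is literally the same in both groups so only the $\ell$-component changes; and (iii) the elementary fact $\# H^1_{\rm ur}(\bfQ_\ell, W_s) = \# H^0(\bfQ_\ell, W_s) = \# H^2(\bfQ_\ell, W_s)$ for $\ell\nmid p$ together with the unramified-Euler-characteristic identity $\# H^1(\bfQ_\ell,W_s) = \# H^0 \cdot \# H^2$ for $\ell \neq p$, so that the full local $H^1$ is exactly twice the size (in $\varpi$-length) of the unramified part. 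No deep arithmetic (class field theory of cyclotomic fields, Iwasawa theory) is needed here — that machinery enters only in Proposition~\ref{prop1-k} and Proposition~\ref{upper bound 1} where this lemma is applied repeatedly to peel off one Euler factor $(1-\tilde\psi(\ell)\ell^k)$ at a time.
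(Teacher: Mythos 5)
Your overall strategy — the Rubin/Greenberg exact sequence
$0 \to H^1_{\Sigma'} \to H^1_{\Sigma'\cup\{\ell\}} \to H^1(\bfQ_\ell, W_s)/H^1_{\rm ur}(\bfQ_\ell, W_s)$
and a local computation at $\ell$ — is exactly the paper's, and the observation that the ramified-at-$\ell$ case collapses (the paper uses $W^{I_\ell}=0$ to get equality of the two Selmer groups) is also correct. But the local computation as written contains a genuine error, and it is precisely the source of the ``shift by one'' you noticed and tried to wave away as Frobenius bookkeeping. The chain
$$\#\,H^1(\bfQ_\ell, W_s)/H^1_{\rm ur}(\bfQ_\ell, W_s) \;=\; \#\,H^1_{\rm ur}(\bfQ_\ell, W_s) \;=\; \#\,H^0(\bfQ_\ell, W_s) \;=\; \#\,W_s^{D_\ell}$$
is false: the unramified local Euler characteristic gives $\#H^1 = \#H^0 \cdot \#H^2$, and $\#H^1_{\rm ur} = \#H^0$, so what the quotient actually equals is $\#H^2(\bfQ_\ell, W_s) = \#\,H^0(\bfQ_\ell, W_s^\ast)$ by local duality, \emph{not} $\#H^0(\bfQ_\ell, W_s)$. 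These differ by a Tate twist: $\Frob_\ell$ acts on $W_s$ with eigenvalue $\tilde\psi^{-1}(\ell)\ell^{-n}$ (giving $\val_p(\tilde\psi(\ell)\ell^{n}-1)$, i.e.\ exponent $n$), whereas it acts on $W_s^\ast \cong (\Oo/\varpi^s)(\tilde\psi\,\epsilon^{n+1})$ with eigenvalue $\tilde\psi(\ell)\ell^{n+1}$ (giving $\val_p(\tilde\psi(\ell)\ell^{n+1}-1)=m$, i.e.\ exponent $n+1$). So your formula as written yields the wrong Euler factor, and the discrepancy is not a normalization artifact — $\#H^0 \ne \#H^2$ here because $W_s$ is not self-dual. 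The one-line fix is to replace $\#H^0(\bfQ_\ell, W_s)$ with $\#H^2(\bfQ_\ell, W_s) = \#(W_s^\ast)^{D_\ell}$.

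For comparison, the paper avoids duality and the Euler characteristic entirely: it uses inflation–restriction for $G = \Gal(\bfQ_\ell^{\rm ur}/\bfQ_\ell) \cong \hat\bfZ$ to identify $H^1(\bfQ_\ell,W_s)/H^1_{\rm ur}(\bfQ_\ell,W_s)$ with $H^1(I_\ell, W_s)^G = \Hom_G(I_\ell^{\rm tame}(p), W_s) = \Hom_G(\bfZ_p(1), W_s)$, and the twist $\bfZ_p(1)$ in the source delivers the $\epsilon^{-n-1}$ (hence the exponent $n+1$) transparently. Both routes are fine once the twist is tracked correctly; the paper's is arguably cleaner because the $(-1)$-twist is forced on you by the tame quotient rather than needing to be recovered via duality.
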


\begin{proof}   First assume that $W$ is ramified  at $\ell$. Then $W^{I_{\ell}}=0$ and we use \cite{BergerKlosin13} Lemma 5.6 to conclude that $$H^1_{\Sigma' \cup \{\ell\}}(\bfQ,W_s)=H^1_{\Sigma'}(\bfQ,W_s).$$ From now on assume that $W$ is unramified at $\ell$. By  \cite{Rubin00}, Theorem 1.7.3 we have an exact sequence $$0 \to H^1_{\Sigma'}(\bfQ, W_s) \to H^1_{\Sigma' \cup \{\ell\}}(\bfQ, W_s) \to \frac{H^1(\bfQ_{\ell}, W_s)}{H^1_{\rm ur}(\bfQ_{\ell}, W_s)}.$$ Lemma 1.3.8(ii) in \cite{Rubin00} tells us that $H^1_{\rm ur}(\bfQ_{\ell}, W_s) = H^1_f(\bfQ_{\ell}, W_s)$, where $H^1_{\rm un}(\bfQ_{\ell}, W_s):= \ker (H^1(\bfQ_{\ell}, W_s) \to H^1(I_{\ell}, W_s))$. We also get \be\label{eq0}H^1(I_{\ell}, W_s) = \Hom(I_{\ell}^{\rm ab}, W_s) = \Hom(\bfZ_p(1), W_s) =W_s(-1).\ee This gives an upper bound of $(\#\bfF)^s=\# W_s$ on the order of the quotient $\frac{H^1(\bfQ_{\ell}, W_s)}{H^1_{\rm ur}(\bfQ_{\ell}, W_s)}$. To prove the claim it is enough to show that the image of the map $H^1(\bfQ_{\ell}, W_s) \to H^1(I_{\ell}, W_s)$ has order not greater than $\#\Oo/p^m \Oo$.  To do so consider the inflation-restriction sequence (where we set $G:=\Gal(\bfQ_{\ell}^{\rm ur}/\bfQ_{\ell})$):
$$H^1(G, W_s) \to H^1(\bfQ_{\ell}, W_s) \to H^1(I_{\ell}, W_s)^{G}\to H^2(G, W_s).$$ The last group in the above sequence is zero since $G\cong \hat{\bfZ}$ and $\hat{\bfZ}$ has cohomological dimension one. This means that the image of the restriction map $H^1(\bfQ_{\ell}, W_s) \to H^1(I_{\ell}, W_s)$ equals $H^1(I_{\ell}, W_s)^{G}$. Let us show that the  latter module has order $\leq \#\Oo/p^m \Oo$. Indeed, \begin{multline} H^1(I_{\ell}, W_s)^{G}=\Hom_G(I_{\ell}, W_s) = \Hom_G(I_{\ell}^{\rm tame}, W_s) \\= \Hom_G(\bfZ_p(1), p^{-s}\Oo/\Oo(\tilde \psi^{-1} \epsilon^{-n})) = \Hom_G(\bfZ_p, p^{-s}\Oo/\Oo(\tilde \psi^{-1} \epsilon^{-n-1})).\end{multline} So, $\phi \in H^1(I_{\ell}, W_s)$ lies in $H^1(I_{\ell}, W_s)^G=\Hom_G(\bfZ_p, p^{-s}\Oo/\Oo(\tilde \psi^{-1} \epsilon^{-n-1}))$ if and only if $\phi(x)=g \cdot \phi(g^{-1}\cdot x) = g\cdot \phi(x)=\tilde \psi^{-1} \epsilon^{-n-1}(g)\phi(x)$ for every $x \in I_{\ell}$ and every $g \in G$, i.e., if and only if \be \label{eq1} (\tilde \psi^{-1} \epsilon^{-n-1}(g)-1)\phi(x)\in \Oo\quad \textup{for every $x \in I_{\ell}$, $g \in G$.}\ee Since $\Frob_{\ell}$ topologically generates $G$, we see that \eqref{eq1} holds if and only if it holds for every $x \in I_{\ell}$ and for $g=\Frob_{\ell}$.  So condition \eqref{eq1} becomes \be \label{eq2} (1-\tilde \psi^{-1}(\ell)\ell^{-n-1})\phi(x)\in \Oo\quad \textup{for every $x \in I_{\ell}$.}\ee Since $\val_p(1-\tilde \psi^{-1}(\ell)\ell^{-n-1})= \val_p(\tilde \psi(\ell) \ell^{n+1}-1)=m$, we get that $\phi(x) \in p^{-m}\Oo/\Oo$, as claimed.
\end{proof}

When $s=1$ and $\psi=1$ we prove a stronger result.

\begin{lemma} \label{lower bound 11}  Let $n \neq 0$ be an integer. Suppose $\ell \in \Sigma - \{p\}$ and $m:=\min\{\val_p(\ell^{n+1} -1),1\}$. Let $\Sigma' \subset \Sigma$ with $\ell \not\in \Sigma'$. Write $q = \#\bfF$. Then one has $$\#H^1_{\Sigma' \cup \{\ell\}}(\bfQ, \bfF(-n)) =  q^m \#H^1_{\Sigma'}(\bfQ, \bfF(-n)).$$
\end{lemma}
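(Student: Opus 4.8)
The plan is to argue exactly as in Lemma~\ref{lower bound 21}, but to exploit the special features $s=1$ and $\psi=1$ to replace the two inequalities there with equalities. Concretely, I would start from the same exact sequence coming from \cite{Rubin00}, Theorem~1.7.3, now in the form
$$0 \to H^1_{\Sigma'}(\bfQ, \bfF(-n)) \to H^1_{\Sigma' \cup \{\ell\}}(\bfQ, \bfF(-n)) \to \frac{H^1(\bfQ_{\ell}, \bfF(-n))}{H^1_{\rm ur}(\bfQ_{\ell}, \bfF(-n))} \to 0,$$
where the surjectivity on the right is the first point that needs care: in the torsion-coefficient case one gets exactness on the right from the global Euler characteristic/Poitou--Tate machinery (the cokernel is controlled by a dual Selmer condition at $\ell$, and since $\bfF(-n)$ is unramified at $\ell$ the "strict" local condition is the unramified one, whose annihilator in the Tate-dual gives no obstruction). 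I would cite the relevant part of \cite{Rubin00} (the exact sequence relating a Selmer group and its enlargement at one prime is exact on the right when the class is unramified there, cf.\ the discussion around Theorem~1.7.3 and Lemma~1.3.8). Then the lemma reduces to computing $\#\bigl(H^1(\bfQ_{\ell}, \bfF(-n))/H^1_{\rm ur}(\bfQ_{\ell}, \bfF(-n))\bigr)$ exactly.

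Next I would compute that local quotient. As in Lemma~\ref{lower bound 21}, inflation--restriction for $G = \Gal(\bfQ_\ell^{\rm ur}/\bfQ_\ell) \cong \hat{\bfZ}$ (cohomological dimension $1$) shows that $H^1(\bfQ_\ell, \bfF(-n))/H^1_{\rm ur}(\bfQ_\ell,\bfF(-n))$ injects into $H^1(I_\ell, \bfF(-n))^G$, and in fact this time the sequence
$$0 \to H^1(G, \bfF(-n)) \to H^1(\bfQ_\ell, \bfF(-n)) \to H^1(I_\ell, \bfF(-n))^G \to 0$$
is exact on the right as well (the $H^2(G,-)$ term vanishes). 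Since $H^1_{\rm ur} = H^1(G,\bfF(-n))$, the quotient is \emph{isomorphic} to $H^1(I_\ell,\bfF(-n))^G$. Now $H^1(I_\ell,\bfF(-n))^G = \Hom_G(\bfZ_p(1),\bfF(-n)) = \Hom_G(\bfZ_p, \bfF(-n-1))$, and $\Frob_\ell$ acts on $\bfF(-n-1)$ by $\ell^{-n-1}$; so a homomorphism $\bfZ_p \to \bfF$ is $G$-equivariant iff $(\ell^{-n-1}-1)$ kills its image, i.e.\ iff either $\ell^{n+1}\equiv 1 \pmod p$ (no condition, the group is all of $\bfF$, order $q$) or $\ell^{n+1}\not\equiv 1\pmod p$ (the group is $0$). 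This is precisely $q^{\min\{\val_p(\ell^{n+1}-1),1\}} = q^m$. Taking orders in the exact sequence gives $\#H^1_{\Sigma'\cup\{\ell\}}(\bfQ,\bfF(-n)) = q^m\, \#H^1_{\Sigma'}(\bfQ,\bfF(-n))$.

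The step I expect to be the main obstacle is justifying the surjectivity on the right of the global exact sequence, i.e.\ that enlarging the Selmer condition at $\ell$ genuinely picks up the full local quotient rather than a proper subgroup. In Lemma~\ref{lower bound 21} the authors only needed an inequality and so could ignore this; here one must invoke the dual Selmer group (via Greenberg--Wiles / Poitou--Tate) and check that the relevant dual local condition at $\ell$ imposes no constraint, which uses that $\bfF(-n)$ is unramified at $\ell$ and that $\ell \in \Sigma-\{p\}$ so no $p$-adic local condition intervenes there. One subtlety to watch: if $W$ is instead \emph{ramified} at $\ell$ — but with $\bfF$-coefficients and $\psi = 1$ the module $\bfF(-n)$ is a power of the cyclotomic character and hence unramified at $\ell\ne p$, so that case does not arise and the argument above covers everything. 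I would also double-check the edge case $n \equiv -1 \pmod{p-1}$ where $\ell^{n+1}\equiv 1$ automatically for all $\ell$, consistent with $m=1$ there.
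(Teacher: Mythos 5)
The overall strategy is right and matches the paper's (use Rubin, Theorem~1.7.3, control the dual Selmer group at $\ell$ to get surjectivity of the localization map), but there is a genuine gap in the key step you yourself flag as the main obstacle: you assert that the dual local condition at $\ell$ imposes no constraint because ``$\bfF(-n)$ is unramified at $\ell$''. That is not the relevant reason --- $\bfF(-n)$ is \emph{always} unramified at $\ell\neq p$, so this observation never rules anything out, and the conclusion would then hold unconditionally, which it does not. The surjectivity of $\mathrm{loc}^s$ onto the full local quotient is equivalent to the vanishing of $\mathrm{loc}_f$ on the dual Selmer group for $W_1^*=\bfF(n+1)$, and \emph{that} is controlled by the local quotient $H^1(\bfQ_\ell,\bfF(n+1))/H^1_{\rm ur}(\bfQ_\ell,\bfF(n+1))$, which by your own local computation has size $q^{\min\{\val_p(\ell^{-n}-1),1\}}$. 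What is needed is that this quotient is zero, i.e.\ $\ell^{n}\not\equiv 1\pmod p$.

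This is where the paper uses the standing hypothesis (announced at the head of Section~\ref{Analysis}) that $\ell\not\equiv 1\pmod p$ for all $\ell\in\Sigma-\{p\}$: in the interesting case $m=1$ one has $\ell^{n+1}\equiv 1\pmod p$, and if also $\ell^{n}\equiv 1\pmod p$ then $\ell\equiv 1\pmod p$, a contradiction. Your proposal never invokes this hypothesis; the ``edge case'' you flag at the end ($n\equiv -1\pmod{p-1}$) is a red herring --- the case that genuinely breaks the unconditional statement is $\ell\equiv 1\pmod p$, and you do not exclude it. So the argument as written does not establish the surjectivity you need, and the lemma would be false without the congruence condition on $\ell$. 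To repair the proof, replace the appeal to unramifiedness by the two-step argument: reduce surjectivity of $\mathrm{loc}^s$ to $\mathrm{loc}_f=0$ on $\mS_{\Sigma'}(\bfQ,W_1^*)$; reduce that to $H^1(\bfQ_\ell,W_1^*)/H^1_{\rm ur}=0$; and verify the latter from $\val_p(\ell^n-1)=0$, which follows from $m=1$ together with $\ell\not\equiv 1\pmod p$.
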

\begin{proof} If $m=0$ the inequality \be \label{one ineq} \#H^1_{\Sigma' \cup \{\ell\}}(\bfQ, \bfF(-n)) \leq   q^m \#H^1_{\Sigma'}(\bfQ, \bfF(-n))\ee follows directly from Lemma \ref{lower bound 21}, while the opposite inequality is clear.  

As before, set $W=E/\Oo(-n)$ and $W_s=W[\varpi^s]$. Then for $m=1$, \cite{Rubin00}, Theorem 1.7.3 gives us again an exact sequence \be \label{pt} 0 \to H^1_{\Sigma'}(\bfQ, W_1) \to H^1_{\Sigma' \cup \{\ell\}}(\bfQ, W_1) \to \frac{H^1(\bfQ_{\ell}, W_1)}{H^1_{\rm ur}(\bfQ_{\ell}, W_1)},\ee and as in the proof of Lemma \ref{lower bound 21} we see that the oder of the module on the right is bounded by $q$. This yields \eqref{one ineq}.   

We now show that the third arrow (which we call ${\rm loc}^s$ following \cite{Rubin00}, section 1.7) in \eqref{pt} is surjective if $\val_p(\ell^{n+1}-1)>0$ and is the zero-map otherwise.

As, before, since $W$ is unramified at $\ell$, Lemma 1.3.5(iv) in \cite{Rubin00} implies that $H^1_{\rm ur}(\bfQ_{\ell}, W) = H^1_f(\bfQ_{\ell}, W)$, where $H^1_{\rm un}(\bfQ_{\ell}, W):= \ker (H^1(\bfQ_{\ell}, W) \to H^1(I_{\ell}, W))$. Similarly, this time using Lemma 1.3.8(ii) in \cite{Rubin00} we get that $H^1_{\rm ur}(\bfQ_{\ell}, W_1) = H^1_f(\bfQ_{\ell}, W_1)$, where $H^1_{\rm un}(\bfQ_{\ell}, W_1):= \ker (H^1(\bfQ_{\ell}, W_1) \to H^1(I_{\ell}, W_1))$.

Write $\mS_{\Sigma'}(\bfQ, W_1^*)$ for the kernel of the map $H^1_{\Sigma'}(\bfQ, W_1^*) \to \bigoplus_{v \in \Sigma'} H^1(\bfQ_v, W_1^*)$ (cf.~\cite{Rubin00}, p.21-22) and analogously for $\mS_{\Sigma' \cup \{\ell\}}$. Here $W_1^* = \Hom(W_1, \bfF)(1) = \bfF(n+1)$.  The cup product induces a perfect pairing $H^1(\bfQ_v, W_1) \times H^1(\bfQ_v, W_1^*) \to H^2(\bfQ_v, \bfF(1)) \cong \bfF(1).$ Theorem 1.7.3(ii) in \cite{Rubin00} yields an exact sequence \be \label{pt2} 0 \to \mS_{\Sigma'\cup \{\ell\}}(\bfQ, W_1^*) \to \mS_{\Sigma'}(\bfQ, W_1^*) \xrightarrow{{\rm loc}_f} H^1_f(\bfQ_{\ell}, W_1^*)\ee where the last module again equals $H^1_{\rm ur}(\bfQ_{\ell}, W_1^*)$ as above.  By the same theorem the image of ${\rm loc}^s$ is the largest subspace of $ \frac{H^1(\bfQ_{\ell}, W_1)}{H^1_{\rm ur}(\bfQ_{\ell}, W_1)}$ having the property that all of its elements pair to zero with any element of the image of ${\rm loc}_f$. Thus to show surjectivity of ${\rm loc}^s$, it is enough to show that ${\rm loc}_f$ is the zero map, i.e., that $\mS_{\Sigma'\cup \{\ell\}}(\bfQ, W_1^*) = \mS_{\Sigma'}(\bfQ, W_1^*)$. Consider the inclusion $\mS_{\Sigma'\cup \{\ell\}}(\bfQ, W_1^*) \subset \mS_{\Sigma'}(\bfQ, W_1^*)$ and assume that $\phi \in \mS_{\Sigma'}(\bfQ, W_1^*)$. This in particular means that $\phi |_{G_{\bfQ_{\ell}}} \in H^1_{\rm ur}(\bfQ_{\ell}, W^*_1)$. If we can show that this forces $\phi|_{G_{\bfQ_{\ell}}}$ to be zero, then we get $\phi \in  \mS_{\Sigma'\cup \{\ell\}}(\bfQ, W_1^*)$ as desired. This will follow if we show that $ \frac{H^1(\bfQ_{\ell}, W_1^*)}{H^1_{\rm ur}(\bfQ_{\ell}, W_1^*)}=0$, i.e., that $H^1_{\Sigma'}(\bfQ, W_1^*) = H^1_{\Sigma'\cup \{\ell\}}(\bfQ, W_1^*)$. Clearly, all we need is  $\#  H^1_{\Sigma'\cup \{\ell\}}(\bfQ, W_1^*) \leq \# H^1_{\Sigma'}(\bfQ, W_1^*)$, which will follow from \eqref{one ineq} applied to $W_1^*$, i.e., replacing $-n$ by $n+1$ as long as we can show that the corresponding value of $m$, which for $W_1^*$ will be $\min \{\val_p(\ell^{-n}-1), 1\}$ is zero. This follows if we show $\val_p(\ell^n-1)=0$. Suppose that $\ell^n \equiv 1$ (mod $p$). Then by our assumption that $\val_p(\ell^{n+1}-1)>0$ we get $1 \equiv \ell^{n+1} \equiv \ell$ (mod $p$) which contradicts the assumption that $\ell \not\equiv 1$ mod $p$. This completes the proof.  
\end{proof}

 For $H^1_{\Sigma}(\bfQ, \bfF(k-1))$ on the other hand it is in general not possible to relate to pieces of class groups, as $H^1_f(\bfQ_p, \bfF(k-1)) \neq H^1_{\rm ur}(\bfQ_p, \bfF(k-1))$ (but see \cite{Rubin00} Proposition 1.6.4(ii) for $k=1$).
 
\begin{prop} \label{propk-1a}One has 
 \begin{multline}  {\rm val}_p(\#H^1_{\Sigma}(\bfQ, \bfF(k-1)))\leq{\rm val}_p(\#H^1(\Gal(\bfQ_{\{p\}}/\bfQ), \bfF(k-1))) \\+ [\bfF:\bfF_p]  \sum_{\ell \in \Sigma - \{p\}} \min \{\val_p(1- \ell^{k-2}), 1\}.\end{multline} \end{prop}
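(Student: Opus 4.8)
The plan is to peel off the primes in $\Sigma-\{p\}$ one at a time, reducing first to $H^1_{\{p\}}(\bfQ,\bfF(k-1))$ and then to $H^1(\Gal(\bfQ_{\{p\}}/\bfQ),\bfF(k-1))$ by simply dropping the crystalline condition at $p$. So I would induct on $\#(\Sigma-\{p\})$. For the inductive step, write $\Sigma=\Sigma'\cup\{\ell\}$ with $\ell\notin\Sigma'$ and $\ell\neq p$; the Selmer groups $H^1_{\Sigma}(\bfQ,\bfF(k-1))$ and $H^1_{\Sigma'}(\bfQ,\bfF(k-1))$ have the same local conditions at every place except $\ell$, where the former imposes no condition and the latter the unramified one. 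Hence Theorem 1.7.3 of \cite{Rubin00} (already invoked in the proofs of Lemmas \ref{lower bound 21} and \ref{lower bound 11}) supplies an exact sequence
$$0\to H^1_{\Sigma'}(\bfQ,\bfF(k-1))\to H^1_{\Sigma}(\bfQ,\bfF(k-1))\to\frac{H^1(\bfQ_\ell,\bfF(k-1))}{H^1_{\rm ur}(\bfQ_\ell,\bfF(k-1))},$$
so that $\val_p\#H^1_{\Sigma}(\bfQ,\bfF(k-1))\le\val_p\#H^1_{\Sigma'}(\bfQ,\bfF(k-1))+\val_p\#\bigl(H^1(\bfQ_\ell,\bfF(k-1))/H^1_{\rm ur}(\bfQ_\ell,\bfF(k-1))\bigr)$.

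The one point that needs care is the size of that local quotient. I would compute it via inflation--restriction for the inertia subgroup $I_\ell\subset\Gal(\ov{\bfQ}_\ell/\bfQ_\ell)$ with quotient $G:=\Gal(\bfQ_\ell^{\rm ur}/\bfQ_\ell)\cong\hat{\bfZ}$: since $\mathrm{cd}(G)=1$ and $H^1(G,\bfF(k-1))=H^1_{\rm ur}(\bfQ_\ell,\bfF(k-1))$, the quotient is $H^1(I_\ell,\bfF(k-1))^G$. Because $\bfF(k-1)$ is unramified at $\ell$ and $p$-torsion, $H^1(I_\ell,\bfF(k-1))=\Hom(I_\ell^{\rm tame},\bfF(k-1))=\Hom(\bfZ_p(1),\bfF(k-1))$, which as a $G$-module is $\bfF(k-2)$; its $G$-invariants $H^0(\bfQ_\ell,\bfF(k-2))$ equal $\bfF$ if $\ell^{k-2}\equiv1\pmod p$ and vanish otherwise. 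Thus $\val_p\#\bigl(H^1(\bfQ_\ell,\bfF(k-1))/H^1_{\rm ur}(\bfQ_\ell,\bfF(k-1))\bigr)=[\bfF:\bfF_p]\min\{\val_p(1-\ell^{k-2}),1\}$. (Under the standing hypothesis $\ell\not\equiv1\pmod p$ this step can instead be read off from Lemma \ref{lower bound 11} applied with $n=1-k$.) Iterating down to $\Sigma=\{p\}$ then gives
$$\val_p\#H^1_{\Sigma}(\bfQ,\bfF(k-1))\le\val_p\#H^1_{\{p\}}(\bfQ,\bfF(k-1))+[\bfF:\bfF_p]\sum_{\ell\in\Sigma-\{p\}}\min\{\val_p(1-\ell^{k-2}),1\}.$$

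To finish, I would note that $H^1_{\{p\}}(\bfQ,\bfF(k-1))$ is by definition the subgroup of $H^1(\Gal(\bfQ_{\{p\}}/\bfQ),\bfF(k-1))$ cut out by the crystalline condition at $p$, hence is contained in it, so $\val_p\#H^1_{\{p\}}(\bfQ,\bfF(k-1))\le\val_p\#H^1(\Gal(\bfQ_{\{p\}}/\bfQ),\bfF(k-1))$; plugging this into the previous display yields the proposition. There is no real obstacle here --- the argument is a chain of standard local computations glued together by Rubin's exact sequence. The only genuine loss of information is this final inclusion: as the remark preceding the proposition explains, the equality $H^1_f(\bfQ_p,\bfF(k-1))=H^1_{\rm ur}(\bfQ_p,\bfF(k-1))$ fails, so unlike in Proposition \ref{prop1-k} we cannot identify $H^1_{\{p\}}(\bfQ,\bfF(k-1))$ with a piece of a class group and must be content with the weaker bound in terms of $H^1(\Gal(\bfQ_{\{p\}}/\bfQ),\bfF(k-1))$.
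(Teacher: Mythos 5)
Your argument is correct and matches the paper's proof in essentially every respect: the paper also reduces to $H^1_{\{p\}}(\bfQ,\bfF(k-1))$ by repeated application of Lemma \ref{lower bound 11} with $n=1-k$ (noting $\val_p(\ell^{k-2}-1)=\val_p(\ell^{2-k}-1)$), and then bounds that by $H^1(\Gal(\bfQ_{\{p\}}/\bfQ),\bfF(k-1))$ exactly as you do. The only cosmetic difference is that you redo the local computation of $H^1(\bfQ_\ell,W)/H^1_{\rm ur}(\bfQ_\ell,W)$ via inflation--restriction inline, whereas the paper delegates it to the already-proved Lemmas \ref{lower bound 21} and \ref{lower bound 11}.
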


\begin{proof} Let us first assume that one has 
 \begin{multline} \label{first step} {\rm val}_p(\#H^1_{\Sigma}(\bfQ, \bfF(k-1)))\leq {\rm val}_p(\#H^1_{\{p\}}(\bfQ, \bfF(k-1))) \\+   [\bfF:\bfF_p]\sum_{\ell \in \Sigma - \{p\}} \min \{\val_p(1- \ell^{k-2}), 1\}.\end{multline}
The Selmer group $H^1_{\{p\}}(\bfQ, \bfF(k-1))$ is certainly no larger than the Selmer group where all the classes are unramified away from $p$ and we impose no condition at $p$. This last Selmer group is isomorphic to $H^1(\Gal(\bfQ_{\{p\}}/\bfQ), \bfF(k-1))$. Here $\bfQ_{\{p\}}$ stands for the maximal algebraic extension of $\bfQ$ unramified away from $p$. This gives us the claim of the Proposition. Hence it remains to prove \eqref{first step}, but this follows by (a possibly repeated application of) Lemma \ref{lower bound 11} where we set $n=1-k$ and note that $\val_p(\ell^{k-2}-1) = \val_p(\ell^{2-k}-1)$. \end{proof}

We will use the following proposition with $r=k-1$. 
\begin{prop} \label{propk-1b} Suppose $r\in \bfZ$, $r> 1$ and that the $\epsilon^{r}$-eigenspace of the $p$-part $C$ of the class group of $\bfQ(\mu_p)$ is trivial. Then $\dim_{\bfF}H^1(\Gal(\bfQ_{\{p\}}/\bfQ), \bfF(r)) \leq 1$. 
\end{prop}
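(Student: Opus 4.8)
The plan is to compare $H^1(\Gal(\bfQ_{\{p\}}/\bfQ),\bfF(r))$---the Selmer group obtained by imposing the unramified condition at every finite prime $\ell\neq p$ and no condition at $p$---with the everywhere-unramified Selmer group, the difference being measured by a single local cohomology group at $p$. Unravelling the definitions (the relevant case of \cite{Rubin00} Theorem 1.7.3, already invoked in Lemmas \ref{lower bound 21} and \ref{lower bound 11}) yields an exact sequence
$$0\to H^1_{\textup{ur}}(\bfQ,\bfF(r))\to H^1(\Gal(\bfQ_{\{p\}}/\bfQ),\bfF(r))\xrightarrow{\ \textup{loc}_p\ }\frac{H^1(\bfQ_p,\bfF(r))}{H^1_{\textup{ur}}(\bfQ_p,\bfF(r))},$$
where $H^1_{\textup{ur}}(\bfQ,\bfF(r))$ consists of the classes unramified at all finite primes (including $p$). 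It therefore suffices to prove: (i) $H^1_{\textup{ur}}(\bfQ,\bfF(r))=0$; and (ii) the local quotient on the right has $\bfF$-dimension $\leq 1$.

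For (i) I would repeat the argument from the proof of Proposition \ref{prop1-k}: restriction to $G_{\bfQ(\mu_p)}$ is injective (inflation--restriction, as $\Gal(\bfQ(\mu_p)/\bfQ)$ has order prime to $p$) and identifies $H^1_{\textup{ur}}(\bfQ,\bfF(r))$ with a subgroup of the $\Gal(\bfQ(\mu_p)/\bfQ)$-equivariant homomorphisms $\Cl_{\bfQ(\mu_p)}\to\bfF(r)$; any such homomorphism factors through the $\epsilon^{r}$-eigenspace of the $p$-part $C$ of $\Cl_{\bfQ(\mu_p)}$, which is zero by hypothesis.

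For (ii), observe that $\bfF(r)\cong\bfF\otimes_{\bfF_p}\bfF_p(r)$ as $G_{\bfQ}$-modules, so all cohomology groups below have the same dimension over $\bfF$ and over $\bfF_p$; moreover $\bfF(r)$ depends only on $r$ modulo $p-1$, so I may assume $0\leq r\leq p-2$, and then $r>1$ forces $2\leq r\leq p-2$, hence $p-1\nmid r$ and $p-1\nmid r-1$. In this range inertia at $p$ acts on $\bfF(r)$ through the nontrivial character $\ov{\epsilon}^{\,r}|_{I_p}$, so $\bfF(r)^{I_p}=0$ and thus $H^1_{\textup{ur}}(\bfQ_p,\bfF(r))=0$; while the local Euler characteristic formula gives $\dim_{\bfF}H^1(\bfQ_p,\bfF(r))=1+\dim_{\bfF}H^0(\bfQ_p,\bfF(r))+\dim_{\bfF}H^2(\bfQ_p,\bfF(r))=1+0+0$, using $\bfF(r)^{G_{\bfQ_p}}=0$ and, via local Tate duality, $H^2(\bfQ_p,\bfF(r))\cong H^0(\bfQ_p,\bfF(1-r))^{\vee}=0$. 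Hence the local quotient in (ii) has dimension $1$, and together with (i) the exact sequence gives $\dim_{\bfF}H^1(\Gal(\bfQ_{\{p\}}/\bfQ),\bfF(r))\leq 1$. The residues excluded by the reduction are harmless: $H^1(\Gal(\bfQ_{\{p\}}/\bfQ),\bfF)\cong\bfF$ records the cyclotomic $\bfZ_p$-extension of $\bfQ$, and $H^1(\Gal(\bfQ_{\{p\}}/\bfQ),\bfF(1))\cong(\bfZ[1/p]^{\times}/p)\otimes_{\bfF_p}\bfF\cong\bfF$ by Kummer theory (as $\Cl(\bfZ[1/p])=0$).

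The substantive point is the local computation at $p$---obtaining $H^1_{\textup{ur}}(\bfQ_p,\bfF(r))=0$ and $\dim_{\bfF}H^1(\bfQ_p,\bfF(r))=1$ from the Euler characteristic formula and local duality---together with the observation that $r>1$ and the $(p-1)$-periodicity of $\bfF(r)$ isolate exactly the residues with $p-1\nmid r$ and $p-1\nmid r-1$; one should also double-check that the eigenspace bookkeeping in (i) matches Proposition \ref{prop1-k}, i.e.\ that cohomology with coefficients $\bfF(j)$ detects the $\epsilon^{j}$-eigenspace of $C$. Alternatively the statement can be proved uniformly in $r$ by restricting to $\bfQ(\mu_p)$: there $H^1(\Gal(\bfQ_{\{p\}}/\bfQ),\bfF(r))$ becomes the $\epsilon^{r}$-eigenspace of $\mathfrak{X}/p\mathfrak{X}$, with $\mathfrak{X}$ the Galois group of the maximal abelian pro-$p$ extension of $\bfQ(\mu_p)$ unramified outside $p$, and one bounds this eigenspace using the class field theory exact sequence linking $\mathfrak{X}$, the local units at the prime above $p$, and $C$---the only subtlety being that the extra $\epsilon^{1}$-direction in the local units is killed by $\zeta_p$.
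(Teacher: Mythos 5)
Your proof is correct, and the main argument you give is genuinely different from the paper's. The paper proceeds by inflation--restriction down to $\bfQ(\chi_0)$, $\chi_0=\ov{\epsilon}^{r}$, then applies Proposition 13.6 of Washington to pass from the class group hypothesis to a $G$-equivariant homomorphism out of the local units $M$ at the unique prime over $p$, kills the torsion $T\cong\mu_p\cong\bfF(1)$ using $r\neq 1$, and reads off the bound from the explicit $G$-decomposition of $M/T\cong 1+\fP$. That is a hands-on class-field-theoretic unit computation. Your approach instead compares the group in question with the everywhere-unramified Selmer group via a Poitou--Tate (Rubin 1.7.3) exact sequence, kills the latter with the class-group hypothesis exactly as in Proposition \ref{prop1-k}, and bounds the single local error term at $p$ by the local Euler characteristic formula and local Tate duality. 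This is more formal and avoids the local-unit decomposition entirely; the price is having to check by hand the two residues $r\equiv 0,1\pmod{p-1}$ that the generic Euler-characteristic argument excludes. You do handle those, though the phrasing ``$r>1$ forces $2\leq r\leq p-2$'' after reduction mod $p-1$ is a small logical slip (for instance $r=p$ reduces to $1$); the subsequent sentence treating the residues $0$ and $1$ separately is what actually makes the argument complete, so please rewrite that transition. Worth noting: your treatment is if anything more careful than the paper's on this point, since the paper's ``such a homomorphism factors through $M/T$ as $r\neq 1$'' really needs $r\not\equiv 1\pmod{p-1}$, which is automatic only in the paper's application $r=k-1$ with $2\leq k\leq p-1$. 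Your closing one-sentence sketch via $\mathfrak{X}/p\mathfrak{X}$ and the class-field-theory sequence is essentially the route the paper takes.
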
 
\begin{proof} Write $G$ for $\Gal(\bfQ_{\{p\}}/\bfQ)$. Using the inflation-restriction sequence we need to show that $$\dim_{\bfF} \Hom_G((\ker \ov{\epsilon}^r)^{\rm ab}, \bfF(r))\leq 1.$$ By Class Field Theory this reduces the problem to studying the units for the splitting field of $\chi_0:=\ov{\epsilon}^{r}$ as a $\Gal(\bfQ(\chi_0)/\bfQ)$-module. A similar analysis has been carried in section 3 of \cite{BergerKlosin09} for imaginary quadratic fields. The current situation is simpler, so we will only sketch the argument here and refer the reader to \cite{BergerKlosin09} for details. Write $M$ for the group of local (at $p$ - note that $p$ ramifies totally in $\bfQ(\chi_0)$) units of $\bfQ(\chi_0)$ and $T$ for its torsion subgroup.
  Then $M/T$ is a free $\bfZ_p$-module of rank $d:=[\bfQ(\chi_0):\bfQ]$. Since the $\ov{\epsilon}^r$-eigenspace of $C$ is trivial, by Proposition 13.6 in \cite{Washingtonbook} we see that any element of $\Hom_G((\ker \chi_0)^{\rm ab}, \bfF(r))$ gives rise to a $G$-equivariant homomorphism from $M$ to $\bfF(r)$. As $T\cong \mu_p$ and so $G$ acts on $T$ by $\ov{\epsilon}$ we see that such a homomorphism will factor through $M/T$ as $r \neq 1$. Using $M/T \cong 1+\fP$, where $\fP$ is the prime of $\bfQ(\chi_0)$ lying over $p$, it is enough to  decompose $\fP$ as a $G$-module. One easily sees that $\fP=\bigoplus_{i=0}^{p-2}\bfF(\ov{\epsilon}^{i})$. 
\end{proof}

\section{Example}\label{Ex} 
We end with an example, where the conditions of Theorem \ref{main} are satisfied.

Let $p=37$, $k=32$, $\Sigma=\{31, 37\}$, and consider $\chi=\omega^{k-1}$ (i.e. $\psi=1$).  Since $p \nmid (1-31^{30})$ we have by Lemma \ref{lower bound 11} that $H^1_{\Sigma}(\bfQ, \chi)=H^1_{\{p\}}(\bfQ, \chi)$. 
By Propositions  \ref{propk-1a} and \ref{propk-1b} we know that the latter is at most 1-dimensional since the relevant piece of the class group of $\bfQ(\mu_p)$ is trivial as $p \nmid B_6$ by Herbrand's theorem. 
Using MAGMA \cite{MAGMA97} one confirms that there are cuspforms of weight 32 of level 1 congruent to Eisenstein series, so by Ribet's lattice construction we know that there exists a non-trivial crystalline extension $\begin{pmatrix} \chi&*\\0&1\end{pmatrix}$, so $\dim_{\bfF} H^1_{\Sigma}(\bfQ, \chi)=1$.

While our arguments below (together with Theorem \ref{main})  imply in particular that $\dim_{\bfF}H^1_{\Sigma}(\bfQ, \chi^{-1})\geq 2$ (so the question of the number of modular extensions becomes relevant) we note that this also follows from Proposition \ref{prop1-k} since $p \mid B_{32}$ (which by the Kummer congruences implies $p \mid B_{1, \omega^{31}}$) and $p \mid (1-31^{32})$.

Since $\eta(\mathbf{1}, 32)=B_{32} (1-31^{32})$ has $\val_{37}=2$ Proposition \ref{T/J} implies that  $\#\bfT/J \geq \# \Oo/p^2$ for $\bfT$ the completion of the Hecke algebra acting on $S_{32}(\Gamma_0(31))$, as one can check using SAGE \cite{sagemath} that there exists a character of conductor $31$  satisfying \eqref{punit} (so $m=1$ in the statement of Proposition \ref{T/J}).

MAGMA calculations further show that  $S_{32}(\Gamma_0(31))$ has 2 Galois conjugacy classes of newforms. One of these has a coefficient field of degree 37 over $\bfQ$. We were not able to calculate its integer ring, but we could check that 37 factors over this field as $\mathfrak{P}_1\mathfrak{P}_2\mathfrak{P}_3\mathfrak{P}_4\mathfrak{P}_5\mathfrak{P}_6$, where only $\mathfrak{P}_1$ and $\mathfrak{P}_2$ have inertia degree 1. Using MAGMA we calculated   the absolute norm of $(a_n(f)-(1+n^{31})) \mod{37}$ for the newforms $f \in S_{32}(\Gamma_0(31))$ and $n=2,3,5$. This gives zero for all 37 Galois conjugates, but not zero modulo $37^2$. This means that all 37 conjugates in the first class are congruent  to the Eisenstein series modulo a prime of inertia degree 1 (but not the square of this prime). They could alternate between the two primes of inertia degree 1, but for one of these (say $\mathfrak{P}_1$) there are at least 19 forms congruent to the Eisenstein series.

For $\Oo$ the completion of the coefficient field at $\mathfrak{P}_1$ we therefore have a surplus of Eisenstein congruences, since $$1/e \sum m_{\lambda}>18>\val_{37}(\#\Oo/\eta(\mathbf{1}, 32))=2$$ (the valuation hasn't gone up in the extension from $\bfZ_p$ to $\Oo$ since the inertia degree and ramification index of the prime $\mathfrak{P}_1$ are 1). 

 It is not a priori clear that the representations associated to these cuspforms are not all isomorphic modulo $p$. But since the assumptions of Theorem \ref{main} are satisfied, we can deduce the existence of more than $\dim_{\bfF} H^1_{\Sigma}(\bfQ, \chi^{-1})$ modular lines in $H^1_{\Sigma}(\bfQ, \chi^{-1})$ and we have also proved that the Eisenstein ideal is not principal.

\bibliographystyle{amsalpha}
\bibliography{standard2}

\providecommand{\bysame}{\leavevmode\hbox to3em{\hrulefill}\thinspace}
\providecommand{\MR}{\relax\ifhmode\unskip\space\fi MR }
\providecommand{\MRhref}[2]{%
  \href{http://www.ams.org/mathscinet-getitem?mr=#1}{#2}
}
\providecommand{\href}[2]{#2}
\begin{thebibliography}{WWE18}

\bibitem[BC09]{BellaicheChenevierbook}
J.~Bella{\"{\i}}che and G.~Chenevier, \emph{$p$-adic families of {G}alois
  representations and higher rank {S}elmer groups}, Ast\'erisque (2009),
  no.~324.

\bibitem[BCP97]{MAGMA97}
W.~Bosma, J.~Cannon, and C.~Playoust, \emph{The {M}agma algebra system. {I}.
  {T}he user language}, J. Symbolic Comput. \textbf{24} (1997), no.~3-4,
  235--265, Computational algebra and number theory (London, 1993).

\bibitem[BK90]{BlochKato90}
S.~Bloch and K.~Kato, \emph{{$L$}-functions and {T}amagawa numbers of motives},
  The Grothendieck Festschrift, Vol.\ I, Progr. Math., vol.~86, Birkh\"auser
  Boston, Boston, MA, 1990, pp.~333--400.

\bibitem[BK09]{BergerKlosin09}
T.~Berger and K.~Klosin, \emph{{A} deformation problem for {G}alois
  representations over imaginary quadratic fields}, Journal de l'{I}nstitut de
  {M}ath. de {J}ussieu \textbf{8} (2009), no.~4, 669--692.

\bibitem[BK13]{BergerKlosin13}
\bysame, \emph{On deformation rings of residually reducible {G}alois
  representations and ${R}={T}$ theorems}, Math. Ann. \textbf{355} (2013),
  no.~2, 481--518.

\bibitem[BK15]{BergerKlosin15}
\bysame, \emph{On lifting and modularity of reducible residual {G}alois
  representations over imaginary quadratic fields}, Int. Math. Res. Not. IMRN
  (2015), no.~20, 10525--10562.

\bibitem[BKK14]{BergerKlosinKramer14}
T.~Berger, K.~Klosin, and K.~Kramer, \emph{On higher congruences between
  automorphic forms}, Math. Res. Lett. \textbf{21} (2014), no.~1, 71--82.

\bibitem[BM16]{BillereyMenares16}
N.~Billerey and R.~Menares, \emph{On the modularity of reducible {${\rm mod}\,
  l$} {G}alois representations}, Math. Res. Lett. \textbf{23} (2016), no.~1,
  15--41.

\bibitem[BM18]{BillereyMenares18}
\bysame, \emph{Strong modularity of reducible {G}alois representations}, Trans.
  Amer. Math. Soc. \textbf{370} (2018), no.~2, 967--986.

\bibitem[Bre01]{Breuil01}
C.~Breuil, \emph{$p$-adic {H}odge theory, deformations and local {L}anglands},
  2001, http://www.ihes.fr/~breuil/PUBLICATIONS/Barcelone.pdf.

\bibitem[CHT08]{ClozelHarrisTaylor08}
L.~Clozel, M.~Harris, and R.~Taylor, \emph{Automorphy for some {$l$}-adic lifts
  of automorphic mod {$l$} {G}alois representations}, Publ. Math. Inst. Hautes
  \'Etudes Sci. (2008), no.~108, 1--181, With Appendix A, summarizing
  unpublished work of Russ Mann, and Appendix B by Marie-France Vign{\'e}ras.

\bibitem[DDT97]{DDT}
H.~Darmon, F.~Diamond, and R.~Taylor, \emph{Fermat's last theorem}, Elliptic
  curves, modular forms \& Fermat's last theorem (Hong Kong, 1993), Internat.
  Press, Cambridge, MA, 1997, pp.~2--140.

\bibitem[DF14]{DummiganFretwell14}
N.~Dummigan and D.~Fretwell, \emph{Ramanujan-style congruences of local
  origin}, J. Number Theory \textbf{143} (2014), 248--261.

\bibitem[Eis95]{Eisenbud}
D.~Eisenbud, \emph{Commutative algebra with a view toward algebraic geometry},
  Graduate Texts in Mathematics, vol. 150, Springer-Verlag, New York, 1995.

\bibitem[HR08]{HamblenRamakrishna08}
S.~Hamblen and R.~Ramakrishna, \emph{Deformations of certain reducible {G}alois
  representations. {II}}, Amer. J. Math. \textbf{130} (2008), no.~4, 913--944.

\bibitem[Klo09]{Klosin09}
K.~Klosin, \emph{Congruences among automorphic forms on {${\rm U}(2,2)$} and
  the {B}loch-{K}ato conjecture}, Annales de l'institut Fourier \textbf{59}
  (2009), no.~1, 81--166.

\bibitem[Maz77]{Mazur78}
B.~Mazur, \emph{Modular curves and the {E}isenstein ideal}, Inst. Hautes
  \'Etudes Sci. Publ. Math. (1977), no.~47, 33--186 (1978).

\bibitem[Miy89]{Miyake89}
T.~Miyake, \emph{Modular forms}, Springer-Verlag, Berlin, 1989, Translated from
  the Japanese by Yoshitaka Maeda.

\bibitem[MW84]{MazurWiles84}
B.~Mazur and A.~Wiles, \emph{Class fields of abelian extensions of {${\bf
  Q}$}}, Invent. Math. \textbf{76} (1984), no.~2, 179--330.

\bibitem[Och00]{Ochiai00}
T.~Ochiai, \emph{Control theorem for {B}loch-{K}ato's {S}elmer groups of
  {$p$}-adic representations}, J. Number Theory \textbf{82} (2000), no.~1,
  69--90.

\bibitem[Oht14]{Ohta14}
M.~Ohta, \emph{Eisenstein ideals and the rational torsion subgroups of modular
  {J}acobian varieties {II}}, Tokyo J. Math. \textbf{37} (2014), no.~2,
  273--318.

\bibitem[Oza17]{Ozawa17}
T.~Ozawa, \emph{Constant terms of {E}isenstein series over a totally real
  field}, Int. J. Number Theory \textbf{13} (2017), no.~2, 309--324.

\bibitem[Rub00]{Rubin00}
K.~Rubin, \emph{Euler systems}, Annals of Mathematics Studies, vol. 147,
  Princeton University Press, Princeton, NJ, 2000, Hermann Weyl Lectures. The
  Institute for Advanced Study.

\bibitem[Ski06]{Skinner06}
C.~M. Skinner, \emph{Main conjectures and modular forms}, Current developments
  in mathematics, 2004, Int. Press, Somerville, MA, 2006, pp.~141--161.

\bibitem[Spe18]{Spencer18}
D.~Spencer, \emph{Congruences of local origin for higher levels}, Ph.D. thesis,
  University of Sheffield, 2018.

\bibitem[Sun10]{Sun10}
Hae-Sang Sun, \emph{Cuspidal class number of the tower of modular curves
  {$X_1(Np^n)$}}, Math. Ann. \textbf{348} (2010), no.~4, 909--927.

\bibitem[SW97]{SkinnerWiles97}
C.~M. Skinner and A.~J. Wiles, \emph{Ordinary representations and modular
  forms}, Proc. Nat. Acad. Sci. U.S.A. \textbf{94} (1997), no.~20,
  10520--10527.

\bibitem[SW99]{SkinnerWiles99}
\bysame, \emph{Residually reducible representations and modular forms}, Inst.
  Hautes \'Etudes Sci. Publ. Math. (1999), no.~89, 5--126 (2000).

\bibitem[{The}18]{sagemath}
{The Sage Developers}, \emph{{S}agemath, the {S}age {M}athematics {S}oftware
  {S}ystem ({V}ersion 8.3)}, 2018, {\tt http://www.sagemath.org}.

\bibitem[Urb01]{Urban01}
E.~Urban, \emph{Selmer groups and the {E}isenstein-{K}lingen ideal}, Duke Math.
  J. \textbf{106} (2001), no.~3, 485--525.

\bibitem[Was97]{Washingtonbook}
L.~C. Washington, \emph{Introduction to cyclotomic fields}, second ed.,
  Graduate Texts in Mathematics, vol.~83, Springer-Verlag, New York, 1997.

\bibitem[WWE18]{WakeWangErickson18preprint}
P.~Wake and C.~Wang-Erickson, \emph{The {E}istenstein ideal with squarefree
  level}, Preprint (2018), arXiv: 1804.06400.

\bibitem[Yoo16]{Yoo16}
H.~Yoo, \emph{The index of an {E}isenstein ideal and multiplicity one}, Math.
  Z. \textbf{282} (2016), no.~3-4, 1097--1116.

\end{thebibliography}

\end{document}